\documentclass[11pt,a4paper,reqno]{amsart}

\usepackage[
    a4paper,
    left=2.6cm,
    right=2.6cm,
    top=2.9cm,
    bottom=2.9cm,
    headheight=14pt,
    headsep=0.5cm
]{geometry}

\usepackage[T1]{fontenc}
\usepackage[utf8]{inputenc}
\usepackage{amsmath,amssymb,amscd,mathtools}

\usepackage{tikz-cd}
\usepackage{enumitem}
\usepackage{hyperref}
\usepackage[nameinlink,capitalise]{cleveref}
\usepackage{fancyhdr}
\usepackage{aliascnt}
\usepackage[nameinlink,capitalise]{cleveref}
\hypersetup{
  colorlinks=true,
  linkcolor=blue,
  citecolor=blue,
  urlcolor=blue
}

\numberwithin{equation}{section}

\theoremstyle{plain}
\newtheorem{theorem}{Theorem}[section]

\newaliascnt{corollary}{theorem}
\newtheorem{corollary}[corollary]{Corollary}
\aliascntresetthe{corollary}

\newaliascnt{proposition}{theorem}
\newtheorem{proposition}[proposition]{Proposition}
\aliascntresetthe{proposition}

\newaliascnt{lemma}{theorem}
\newtheorem{lemma}[lemma]{Lemma}
\aliascntresetthe{lemma}

\theoremstyle{definition}

\newaliascnt{definition}{theorem}
\newtheorem{definition}[definition]{Definition}
\aliascntresetthe{definition}

\newaliascnt{remark}{theorem}
\newtheorem{remark}[remark]{Remark}
\aliascntresetthe{remark}

\newaliascnt{example}{theorem}

\aliascntresetthe{example}

\crefname{theorem}{Theorem}{Theorems}
\Crefname{theorem}{Theorem}{Theorems}

\crefname{corollary}{Corollary}{Corollaries}
\Crefname{corollary}{Corollary}{Corollaries}

\crefname{proposition}{Proposition}{Propositions}
\Crefname{proposition}{Proposition}{Propositions}

\crefname{lemma}{Lemma}{Lemmas}
\Crefname{lemma}{Lemma}{Lemmas}

\crefname{definition}{Definition}{Definitions}
\Crefname{definition}{Definition}{Definitions}

\crefname{remark}{Remark}{Remarks}
\Crefname{remark}{Remark}{Remarks}

\crefname{example}{Example}{Examples}
\Crefname{example}{Example}{Examples}

\crefname{equation}{Equation}{Equations}
\Crefname{equation}{Equation}{Equations}

\usepackage{booktabs}                   
\usepackage{graphicx}
\usepackage{graphicx}
\usepackage{makecell}

\newcommand{\F}{\mathbb F}
\newcommand{\Z}{\mathbb Z}
\newcommand{\Gal}{\operatorname{Gal}}

\newcommand{\im}{\operatorname{im}}

\newcommand{\Hom}{\operatorname{Hom}}

\newcommand{\res}{\operatorname{res}}
\newcommand{\infmap}{\operatorname{inf}}

\newcommand{\floor}[1]{\left\lfloor #1\right\rfloor}

\newcommand{\gp}{\mathfrak p}
\newcommand{\gP}{\mathfrak P}
\newcommand{\gm}{\mathfrak m}

\newcommand{\Fp}{\mathbb F_p}

\newcommand{\nr}{\mathrm{nr}}

\newcommand{\tg}{\operatorname{tg}}

\begin{document}

\title[Ihara Constants from Zassenhaus Filtration]{Zassenhaus Filtrations and Golod-Shafarevich Methods for Ihara Constants}

\author{Liming Ma}
\address{School of Mathematical Sciences,
University of Science and Technology of China,
Hefei 230026, China}
\email{lmma20@ustc.edu.cn}

\author{Chaoping Xing}
\address{School of Computer Sciences,
Shanghai Jiao Tong University,
Shanghai 200240, China}
\email{xingcp@sjtu.edu.cn}

\author{Zugan Xing}
\address{School of Mathematical Sciences,
Shanghai Jiao Tong University,
800 Dongchuan Road,
Shanghai 200240, China}
\email{xingzugan@aliyun.com}

\subjclass[2020]{Primary 11G20; Secondary 14G05, 14G15}

\keywords{Zassenhaus filtration, rational points, embedding problem, class field tower, Ihara constant}

\begin{abstract}
Let \(X\) be a curve over \(\mathbb{F}_q\). Denote by \(N(X)\) and \(g(X)\) the number of \(\mathbb{F}_q\)-rational points and the genus of \(X\) respectively. The Ihara constant
\begin{equation*}
A(q)=\limsup_{g(X)\to\infty}\frac{N(X)}{g(X)}
\end{equation*}
measures the asymptotic abundance of rational points on curves over \(\mathbb{F}_q\). It is a central problem in the study of algebraic geometry and has also attracted broader mathematical interest. In particular, the constant $A(2)$ is discussed in the \emph{Optimization Constants in Mathematics} collection maintained by Terence Tao et al \cite[Number 33]{DIT26}.

In this paper, we study the Zassenhaus filtration of pro-\(p\) groups arising from class field towers. By combining Golod--Shafarevich theorem, embedding problems and central class field theory, we establish a new criterion for the infinitude of class field towers.
As an application, we obtain an improvement on the best known lower bounds of Ihara constant $A(2)$ and $A(3)$ which are explicitly given by
\[
  A(2)\geq 0.317779\cdots \text{ and }
  A(3)\geq 0.496074\cdots.
\]

\end{abstract}

\maketitle

\section{Introduction and main statements}
Let \(p\) be a prime, \(q=p^e\) be a prime power and $\F_q$ be the finite field with $q$ elements. For a projective, nonsingular, geometrically irreducible curve \(X/\mathbb{F}_q\) of genus \(g\), denote by \(N(X)\) the number of \(\mathbb{F}_q\)-rational points on \(X\). The Hasse--Weil bound gives
\[
   |N(X)-(q+1)|\le 2g\sqrt q.
\]
However, even when \(q\) is a square, Ihara \cite{Ihara1981} showed that the Hasse--Weil bound cannot be attained for curves of sufficiently large genus.

For any fixed \(q\), let $N_q(g)$ be the maximum number of rational places of curves with genus $g$, that is,
\[ N_q(g)=\max\{N(X): X/\mathbb{F}_q \text{ is a curve with genus }g\}. \]
In \cite{Ihara1981}, Ihara introduced the following quantity
\[A(q)=\limsup_{g\to\infty}\frac{N_q(g)}{g}\]
which is called the Ihara constant, and proved that $A(q)\ge \sqrt{q}-1$ for any square $q$ from deep results in algebraic geometry and modular curves. Independently, Tsfasman, Vl\u{a}du\c{t} and Zink proved that $A(p^2)\ge p-1$ from modular curves in \cite{Tsfasman1982}.
The Drinfeld--Vl\u{a}du\c{t} bound \cite{Drinfeld1983} states $A(q)\leq \sqrt{q}-1$, which is still the best upper bound.
Thus, when \(q\) is a square, we have $$A(q)=\sqrt{q}-1.$$
For any non-square $q$, the exact value of $A(q)$ is still unknown.

The Ihara constant is an important asymptotic quantity in number theory and algebraic geometry. The main open problem of determining its exact value and obtaining better bounds has attracted broader mathematical interest. In particular, $A(2)$ is discussed in the \emph{Optimization Constants in Mathematics} collection maintained by Terence Tao et al.~\cite[Number 33]{DIT26}.

One of the main motivations of Ihara constant $A(q)$ comes from coding theory and cryptography \cite{CCCX09,Xing2014,CC06,LWX23,HXbook2009,Tsfasman1982}.
In particular, if $q\ge 49$ is a square, then there exists a family of asymptotically good algebraic geometry codes with parameters exceeding the Gilbert--Varshamov bound \cite{Tsfasman1982}. For applications in coding theory and cryptography, it is essential to have explicit towers of function fields.
Garcia and Stichtenoth provided two explicit optimal towers of function fields attaining the  Drinfeld--Vl\u{a}du\c{t} upper bound recursively defined by Artin--Schreier curves over finite field $\F_q$ with $q=p^{2m}$ \cite{GS95,GS96}.  Bassa {\it et al.} provided an explicit tower of function fields over $\F_q$ with $q=p^{2m+1}$ for $m\ge 1$ such that
$$A(p^{2m+1})\ge \frac{2(p^{m+1}-1)}{p+1+\zeta}\quad \text{ with } \quad \zeta=\frac{p-1}{p^m-1}$$
in \cite{BBGS15}.
In $2020$, Bassa and Ritzenthaler gave an explicit construction for good recursive towers with $A(q)\ge 2/(q-2)>0$ over any finite field $\F_q$ with $q\ge 4$ including prime finite fields \cite{BR20}.

Serre initially used the class field tower method to obtain the logarithmic lower bound $A(q)\geq \log_2(q)/96$ for any prime power \(q\) \cite{Serrebook}. In particular, if \(q\) is a small prime, then the logarithmic lower bound can be improved. In \cite{Se83}, Serre proved that
\[
A(2)\geq \frac{2}{9}=0.2222\cdots.
\]
Schoof rediscovered this fact by exhibiting a different class field tower in \cite{Sc92}. Niederreiter and Xing \cite{HX1998} used unramified class field towers and cyclotomic function fields to show that
$$A(2)\geq 81/317=0.2555\cdots \quad\text { and } \quad A(3)\geq 62/163=0.38037\cdots.$$
In \cite{XY07}, Xing and Yeo improved that $A(2)\ge 97/376=0.257979\cdots$ by cyclotomic function fields.
On the other hand, Maire developed tamely ramified towers in both the number field setting \cite{HMcompositio} and the function field setting \cite{HM00,BM02}. In particular, the Ihara constant $A(3)$ can be improved by
\[
A(3)\geq 12/25=0.48.
\]

In 2002, Kuhnt developed unramified class field towers over base extensions with bounded wild ramification and showed $A(2)\geq 39/129= 0.302325\cdots$ in his dissertation \cite{Kuhnt}.
 In 2013, Duursma and Mak \cite{Iharacompositio} summarized the Kuhnt's idea and obtained the improved lower bounds
\[
A(2)\geq 0.316999\cdots \quad \text{ and } \quad
A(3)\geq 0.492876\cdots.
\]
Moreover, we note a gap in  \cite[Proposition 5.6]{Iharacompositio}, which may lead to a smaller lower bound for $A(3)$ than $ 0.492876\cdots$.  For details, see \cref{remark:A3}.

In this paper, we refine this approach by introducing the Zassenhaus filtration of pro-\(p\) groups arising from class field towers to obtain the new lower bounds of Ihara constant $A(q)$ for $q=2$ and $3$.

\begin{theorem}[Main tower criterion]
\label{thm:maincriterion}
Let $k/\mathbb{F}_q$ be a function field of genus $g$, where $q=p^e$. Let $S$ and $T$ be disjoint nonempty finite sets of places of $k$, with $T$ consisting of rational places. Put $t=|T|>0.$ For each $\mathfrak{p}\in S$, let $f_{\mathfrak{p}}=\deg(\mathfrak{p}),$ and choose a positive integer $\nu_{\mathfrak{p}}$. Define $\mathfrak{m}=\sum_{\mathfrak{p}\in S} \nu_{\mathfrak{p}}\mathfrak{p}.$ Let $K=k_{\mathfrak{m}}^{T}$ be the maximal elementary abelian $p$-extension of $k$ with conductor bounded by $\mathfrak{m}$, in which every place in $T$ splits completely. Set $n_{\mathfrak{p}}=ef_{\mathfrak p}
\left(
\nu_{\mathfrak p}-1-
\left\lfloor
\frac{\nu_{\mathfrak p}-1}{p}
\right\rfloor
\right)$ and
\[
d=1+\sum_{\mathfrak{p}\in S}e f_{\mathfrak{p}}\left(\nu_{\mathfrak{p}}-1-\left\lfloor\frac{\nu_{\mathfrak{p}}-1}{p}\right\rfloor\right)-t.
\]
Assume that $d\geq 1$. If there exists $\tau\in(0,1)$ such that
\begin{align}
1-d\tau
+
\left(
d+t-1+
\sum_{\mathfrak{p}\in S}
\binom{n_{\mathfrak{p}}}{2}
\right)\tau^2
+
\sum_{\mathfrak{p}\in S}
n_{\mathfrak{p}}\tau^p
<0,
\label{eq:main-inequality}
\end{align}
then the $(T,p)$-class field tower above $K$ is infinite. Moreover, we have
\[
A(q)
\geq
\frac{t}{
\displaystyle
g-1+
({1}/{2[K:k]})
\sum_{\chi}
\deg\mathfrak{f}_{\chi}
}\ge \frac{t}{
\displaystyle
g-1+
({1}/{2})
\sum_{\mathfrak{p}\in S}
f_{\mathfrak{p}} \nu_{\mathfrak{p}}
},
\]
where $\chi$ runs through all characters of $\operatorname{Gal}(K/k)$ and $\mathfrak{f}_{\chi}$ denotes the Artin conductor of $\chi$.
\end{theorem}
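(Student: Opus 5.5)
We will prove the tower criterion by studying the pro-$p$ group $G=\Gal(k_S^T(p)/k)$, where $k_S^T(p)$ is the maximal pro-$p$ extension of $k$ that is unramified outside $S$ and in which every place of $T$ splits completely. We impose the ramification bound through the Zassenhaus filtration: we pass to the quotient in which the higher ramification groups at each $\mathfrak p\in S$ are cut down to reflect conductor at most $\nu_{\mathfrak p}$. Call this quotient $\mathcal G$.

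\textbf{Step 1: presentation of $\mathcal G$.} Let $\mathcal H$ be the closed normal subgroup of $\mathcal G$ fixing $K$. Then $K\subseteq L$, where $L$ is the fixed field of the Frattini subgroup of $\mathcal G$. We compute $\dim_{\F_p}\mathcal G/\mathcal G^p[\mathcal G,\mathcal G]=\dim_{\F_p}\Gal(K/k)$ using class field theory with modulus. The group $\Gal(k_{\mathfrak m}^T/k)/p$ is the dual of a ray class group modulo $\mathfrak m$ with $T$-units removed. Its $p$-rank is
\[
1+\sum_{\mathfrak p}ef_{\mathfrak p}\Bigl(\nu_{\mathfrak p}-1-\Bigl\lfloor \tfrac{\nu_{\mathfrak p}-1}{p}\Bigr\rfloor\Bigr)-t
\]
or more. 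Here the $ef_{\mathfrak p}(\cdots)$ term is the $p$-rank of $U^{(1)}_{\mathfrak p}/U^{(\nu_{\mathfrak p})}_{\mathfrak p}$, where $p$-th powers remove the indices divisible by $p$. The $1$ comes from the constant field, or from the Jacobian contribution after the $T$-splitting cost, and $-t$ comes from the decomposition groups at $T$. So the generator rank is at least $d$. For the relations, we use the local structure. At each $\mathfrak p$, the local group generated by the $n_{\mathfrak p}$ wild generators must satisfy the relations coming from $U^{(1)}/U^{(\nu)}$, which is an abelian group of exponent $p^{\lceil\cdot\rceil}$. These give commutator relations, $\binom{n_{\mathfrak p}}2$ of them in Zassenhaus degree $2$, and $p$-power relations, $n_{\mathfrak p}$ of them in degree $p$. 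The global relations number $d+t-1$, in degree $2$: they come from $H^2$ via the Poitou–Tate/Shafarevich count, i.e. $r-d\le t-1$ plus the generators.

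\textbf{Step 2: Golod–Shafarevich.} We apply the Vinberg/Golod–Shafarevich inequality with respect to the Zassenhaus filtration. If a finite pro-$p$ group has generators of degree $1$ and relations of degrees $\deg r_i$, then $1-d\tau+\sum_i\tau^{\deg r_i}>0$ for all $\tau\in(0,1)$. Inequality \eqref{eq:main-inequality} violates this, so the relevant group is infinite. To transfer this to the statement about $K$, we argue as follows. The $(T,p)$-class field tower above $K$ is unramified over $K$, and the ramification of $\mathcal G$ is already fully absorbed in $K$, because the conductor bound kills the higher ramification beyond the first layer via the embedding-problem/central-class-field argument. Hence $\mathcal G$ infinite forces the tower above $K$ to be infinite.

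\textbf{Step 3: counting points.} Let $K_n$ be the layers of the tower. Each $K_n/K$ is unramified, and the $t[K_n:k]$ places above $T$ split completely, so all of them are rational. Hurwitz gives $g(K_n)-1=[K_n:K](g(K)-1)$. The conductor–discriminant formula gives
\[
2g(K)-2=[K:k](2g-2)+\sum_\chi\deg\mathfrak f_\chi .
\]
Therefore
\[
\frac{N(K_n)}{g(K_n)}\to \frac{t[K:k]}{g(K)-1},
\]
which is the first bound. For the second bound, each nontrivial $\chi$ has conductor $\le\mathfrak m$, and in an elementary abelian group at most a $1/p$ fraction of characters is unramified at $\mathfrak p$. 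We need the sum to be at most $[K:k]\sum f_{\mathfrak p}\nu_{\mathfrak p}$, and this is immediate from $\mathfrak f_\chi\le\mathfrak m$.

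\textbf{Main obstacle.} The delicate point is Step 1's relation count. We must justify that the local relations genuinely sit in Zassenhaus degrees $2$ and $p$, and not in degree $2$ for everything, and that the ramification-truncated quotient controls the tower over $K$. We would first try to realize $\mathcal G$ as $\Gal(M/k)$, where $M$ is the maximal extension whose local inertia images are quotients of $U^{(1)}/U^{(\nu)}$. We would then use central class field theory to show that any central $\Z/p$ embedding of $K$ unramified over $K$ lifts inside $M$.
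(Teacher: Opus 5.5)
\textbf{The transfer in Step 2 fails.} Your $\mathcal G$ is essentially $G^T_{S,\nu}=\Gal(k^T_{S,\nu}/k)$. Because $L/K$ is unramified, the top $L$ of the $(T,p)$-tower over $K$ has depth at most $\nu$ over $k$. So $L\subseteq k^T_{S,\nu}$, and $\Gal(L/k)$ is a \emph{quotient} of $\mathcal G$. Concretely, $\Gal(L/K)$ is obtained from the open subgroup $\Gal(k^T_{S,\nu}/K)$ by dividing out the normal closure of its inertia groups, and this quotient can be finite even when $\mathcal G$ is infinite. Your claim that ``the ramification of $\mathcal G$ is already fully absorbed in $K$'' is false in general. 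If $\nu_\gp>p$, then $(1+x)^p=1+x^p$ shows that $U^{(1)}_\gp/U^{(\nu_\gp)}_\gp$ has exponent at least $p^2$. Hence the $p$-part of the full ray class field modulo $\gm$, which lies in $k^T_{S,\nu}$, will in general have inertia of exponent $p^2$ at $\gp$, while $K/k$ is elementary abelian; that ray class field is then ramified over $K$. This is the regime of $\mathfrak p_6$ in the $A(2)$ application, where $\nu=3>p=2$. For the same reasons your relation count does not describe $\mathcal G$. Its local groups need not be abelian, their inertia need not be elementary abelian, and you ignore the Frobenius generator and its power relation. There is also no Poitou--Tate/Shafarevich formula for $H^2$ of a bounded-depth group, so ``$d+t-1$ global relations in degree $2$'' is unsupported.

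\textbf{What is missing is the mechanism that bounds the graded relation module, applied to $G=\Gal(L/k)$ for a putatively finite tower.} The paper argues by contradiction. For each $N$, a basis of $\operatorname{Fil}_NH^2_{\mathrm{nr}}(G,\Z/p\Z)$ gives central embedding problems, which are solvable because $H^2(G^{T}_{S,p},\Z/p\Z)=0$. Their solutions give a central elementary abelian extension of $L$ of that rank, unramified outside $S$ and $T$-split. One then lowers its ramification depth to $\nu_L$ by adding and removing ray class layers. Next one kills the remaining inertia, whose rank is controlled by \cref{lem:inertia-rank-bound}, which yields an unramified $T$-split extension $\tilde L/L$ of rank at least $\delta_N$ (\cref{prop:Truncation of high ramification}, \cref{thm:kuhnt}). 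Maximality of $L$ then forces $\delta_N\le 0$.

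The local terms are bounded by \cref{prop:local-relation}. There $H^2_{\mathrm{nr}}(G_\gp,\Z/p\Z)$ absorbs the Frobenius power relation. Moreover, $I_\gp$ is elementary abelian of rank at most $n_\gp$ precisely because $K/k$ is elementary abelian and $L/K$ is unramified. That is the real source of the terms $\binom{n_\gp}{2}\tau^2+n_\gp\tau^p$. The bounds for all $N$ are combined by multiplying by $(1-\tau)\tau^N$ and summing, and $d_p(G)$ is replaced by $d$ using $-d_p(G)\tau(1-\tau)\le -d\tau(1-\tau)$.

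Your proposed fix, lifting central $\Z/p$-embeddings of $K$ into $M$, points the wrong way: one must manufacture unramified central extensions of the top field $L$ from classes in $H^2(\Gal(L/k))$. Your generator-rank estimate is fine, and so is Step~3, including both genus bounds (the second follows from $\mathfrak f_\chi\le\gm$ summed over the $[K:k]$ characters).
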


Using \cref{thm:maincriterion}, we can construct several function fields $K$ with many rational places relative to their genera and improve the lower bounds of Ihara constant $A(q)$ for $q=2$ and $3$.

\begin{theorem}
\label{thm:A2}
 The Ihara constants $A(2)$ and  $A(3)$ are lower bounded by   $$A(2)\geq 0.317779\cdots \quad \text{ and } \quad A(3)\geq 0.496074\cdots.$$
\end{theorem}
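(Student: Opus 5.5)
The plan is to apply \cref{thm:maincriterion} to explicitly chosen base fields, one over $\F_2$ and one over $\F_3$. For each base field we must exhibit a function field $k/\F_q$ of genus $g$, a set $T$ of $t$ rational places, a set $S$ of places of degrees $f_{\mathfrak p}$, and exponents $\nu_{\mathfrak p}$. The data must satisfy three conditions. First, $d\ge 1$. Second, inequality \eqref{eq:main-inequality} must hold for some $\tau\in(0,1)$. Third, the resulting ratio $t/(g-1+\tfrac{1}{2[K:k]}\sum_\chi\deg\mathfrak f_\chi)$ must equal the claimed constants $0.317779\cdots$ and $0.496074\cdots$.

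\textbf{Search for candidates.} Natural candidates are curves with many rational points relative to genus, taken from the tables of $N_q(g)$, for example small-genus curves over $\F_2$ and $\F_3$ attaining or nearly attaining $N_q(g)$. We want $t$ large and $g$ small. The price is that $d=1+\sum n_{\mathfrak p}-t$ must stay positive, so $S$ has to contribute enough: places of larger degree, or moderate $\nu_{\mathfrak p}$. For each candidate we scan over small choices of $S$ and $\nu_{\mathfrak p}$. For $p=2$ the relevant values of $n_{\mathfrak p}=ef_{\mathfrak p}(\nu_{\mathfrak p}-1-\lfloor(\nu_{\mathfrak p}-1)/2\rfloor)$ are easy to tabulate. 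For each choice we minimize the polynomial $P(\tau)=1-d\tau+(d+t-1+\sum\binom{n_{\mathfrak p}}2)\tau^2+\sum n_{\mathfrak p}\tau^p$ over $(0,1)$, checking negativity. For $p=2$ this is a quadratic, so negativity is a discriminant condition. For $p=3$ it is a cubic, and we exhibit an explicit rational $\tau$ where $P(\tau)<0$.

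\textbf{The conductor sum.} To obtain the sharper bound (the first expression), we compute $\sum_\chi\deg\mathfrak f_\chi$ exactly rather than using the crude estimate $\tfrac12\sum f_{\mathfrak p}\nu_{\mathfrak p}$. By the conductor--discriminant formula, this amounts to computing the ray class group structure. Concretely, we determine the filtration of the principal units $U^{(i)}_{\mathfrak p}$ modulo $p$-th powers and modulo the global units that are $T$-units. This tells us, for each jump $i\le\nu_{\mathfrak p}$, how many characters of $\operatorname{Gal}(K/k)$ have local conductor exponent exactly $i$ at $\mathfrak p$. The degree $[K:k]$ is read off from the same $\F_p$-rank computation, namely $p^{\,r}$ with $r$ governed by $d$ (essentially $\sum n_{\mathfrak p}$ minus the contribution of the $T$-units and of the $p$-rank of the $T$-class group). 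The resulting ratio is then evaluated numerically, and is checked to exceed both the previous bounds $0.316999$ and $0.492876$.

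\textbf{Main obstacle.} The hardest part is this exact conductor computation. It requires knowing precisely which local principal units are killed by global $T$-units, and this depends on the arithmetic of the specific curve: the rank and position of the $T$-unit group inside $\prod_{\mathfrak p\in S}U_{\mathfrak p}/U_{\mathfrak p}^{(\nu_{\mathfrak p})}$. This is exactly where \cref{remark:A3} suggests errors occurred before. My first attempt will be to choose $T$ as the set of all rational places but one, and to choose $S$ so that the relevant $T$-units are explicit functions with known divisors. The images of these units in the local unit filtrations can then be computed by hand (or by machine) through their local expansions at $\mathfrak p$. If the image is in general position, the count of conductors follows by a counting argument over $\F_p$-subspaces.
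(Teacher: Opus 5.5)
\textbf{Gap.} Your proposal describes how one might search for suitable data, but it never produces any, and the data are the whole content of this statement. The constants $0.317779\cdots$ and $0.496074\cdots$ do not come from a general argument. They are the values of the bound in \cref{thm:maincriterion} for two specific choices of $(k,S,\nu,T)$, so nothing is proved until a base field is exhibited and the inequality is checked.

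The paper supplies this data.
\begin{itemize}
\item Over $\F_2$ it starts from the genus-$2$ curve $H\colon y^2+(x^3+x+1)y+x^2+x=0$, which has $6$ rational places. It lets $k$ be the compositum of the $31$ Artin--Schreier extensions $k_h$, $0\neq h\in\langle 1,x,x^2,x^3,y\rangle$. All rational places split in each $k_h$, and the conductors are $2P_3+2P_7$ or $2P_7$. This gives $[k:H]=32$, $g(k)=334$ and $a_1(k)=192$. Splitting and inertia arguments then certify enough places of degrees $6,8,10,12$.
\item With $T$ all $192$ rational places, the paper takes $\nu=2$ at one place of degree $6$, sixteen of degree $8$, twelve of degree $10$ and one of degree $12$, and $\nu=3$ at a second place of degree $6$. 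Then $d=81$ and $P(\tau)=1-81\tau+1628\tau^2$, whose discriminant is $49>0$.
\item Over $\F_3$ it uses Duursma--Mak's field with $g=601$, $a_1=567$, $a_8=162$, $a_9=1809$. It takes $T$ all rational places and $S$ consisting of $43$ places of degree $8$ and $2$ of degree $9$, all with $\nu=3$. Then $d=158$ and $1-158\tau+6190\tau^2+724\tau^3<0$ near $\tau\approx 0.0128$.
\end{itemize}

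Your suggested source of candidates, small-genus curves from tables of $N_q(g)$, is unlikely to reach these values. For $p=2$ and $\nu_\gp=2$ one has $\frac12\sum_\gp f_\gp\nu_\gp=\sum_\gp n_\gp=d+t-1$, so the ratio is roughly $t/(g+t+d)$. Meanwhile negativity requires $d^2>4\bigl(d+t-1+\sum_\gp\binom{n_\gp+1}{2}\bigr)$. So you need $t$ large, $t/g$ large, and many places of moderate degree all at once. That is why the paper builds $k$ as a large compositum rather than taking a tabulated curve.

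\textbf{Misplaced obstacle.} The step you flag as the main obstacle is not needed. You do not have to compute $\sum_\chi\deg\mathfrak f_\chi$ or $[K:k]$ exactly, nor locate global $T$-units inside $\prod_{\gp\in S}U_\gp/U_\gp^{(\nu_\gp)}$.

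\Cref{cor:average-conductor} bounds the average conductor from above using only two ingredients: Auer's formula, and the fact that $\Gal(k^T_\gm/k^T_{\gm-(\nu_\gp-j)\gp})$ is a quotient of $(U_\gp^{(1)})^pU_\gp^{(j)}/(U_\gp^{(1)})^pU_\gp^{(\nu_\gp)}$, whose order is purely local. The resulting contribution per place is $f_\gp(1-2^{-f_\gp})$ in the $\F_2$ case and $\frac{f_\gp}{2}(3-3^{-f_\gp}-2\cdot 3^{-2f_\gp})$ in the $\F_3$ case. The stated constants are exactly the values this bound gives, for example $192/\bigl(333+12(1-2^{-6})+128(1-2^{-8})+120(1-2^{-10})+12(1-2^{-12})\bigr)$.

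So the real work lies in constructing $k$ and certifying its places of the required degrees, not in any exact conductor computation. Finally, taking $T$ to be all rational places but one lowers $t$ and with it the ratio; the paper uses all rational places.
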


The applications of the Zassenhaus filtration to class field tower problems over number fields \cite{Vogel2005,HMR2020,AhlqvistCarlson2025,HMRcutt} motivate us to develop a corresponding framework for function fields. To our knowledge, this is the first application of the Zassenhaus filtration to the construction of function fields over finite fields with many rational places. It would be interesting to combine this framework with Massey products to further improve lower bounds for Ihara constants or to construct function fields over finite fields with large automorphism groups \cite[Proposition 5.1]{SXX26}.

This article is organized as follows.
In Section \ref{sec:2}, we introduce the Zassenhaus filtration of pro-$p$ groups and Golod--Shafarevich theorem on infinite tower of function fields.
In Section \ref{sec:3}, we estimate the generator rank of the Galois group \(G_{S,\nu}^{T}\) of the maximal pro-\(p\) extension \(K_{S,\nu}^{T}\) of \(K\) which is unramified outside \(S\), has ramification depth at most \(\nu_{\mathfrak p}\) at \(\mathfrak p\in S\), and in which every prime of  \(T\) splits completely.
In Section \ref{sec:relation-detailed}, we estimate the upper bound of the difference of relation rank and generator rank.
In Section \ref{sec:5}, we provide the complete proof of the main tower criterion \cref{thm:maincriterion}.
In Section \ref{sec:6}, we give examples of function fields $k/\mathbb{F}_q$ for which the criterion of \cite[Theorem 1.1]{Iharacompositio} does not prove the infinitude of the corresponding class field towers, whereas our criterion does. Moreover, a detailed comparison with the method given in \cite{Iharacompositio} is provided in Section \ref{subsec:6.2}.

\section{Zassenhaus filtration and class field tower}\label{sec:2}

First, let us summarize the method of class field tower introduced by Serre to obtain the lower bound of Ihara constant.
Let \(p\) be a prime, \(q=p^e\) be a prime power and $\F_q$ be the finite field with $q$ elements.
Let $K/\F_q$ be a global function field of genus $g(K)$, and let $T$ be a nonempty finite set of rational places of $K$.  For a prime $\ell$, the $(T,\ell)$-Hilbert class field $H(K)$ is the maximal unramified abelian $\ell$-extension of $K$ in which all places of $T$ split completely.  Iterating this construction gives
\[
   K=K_0\subseteq K_1\subseteq K_2\subseteq \cdots,
   \qquad K_{n+1}=H(K_n).
\]
If this tower is infinite, then all places above $T$ split completely at every level, while the extensions are unramified.  Thus, the Hurwitz genus formula yields
\begin{equation}
\label{eq:basic-bound}
   A(q)\ge \frac{|T|}{g(K)-1}
\end{equation}
from \cite[Proposition 2.2]{Sc92} or \cite[Theorem 2.7.6]{HXrationalbook}.

\begin{theorem}
\cite[Theorem 3.4]{GSgroup-survey}
\label{thm:GS}
Assume that $G$ is a finitely generated pro-$p$ group.
Let $d_p(G)$ be the generator rank of $G$ and $r_p(G)$ be the relation rank of $G$.
If there exists a real number \(\tau\in(0,1)\) such that
\begin{align*}
1-d_p(G)\tau+ r_p(G)\tau^2<0,
\label{eq:main-inequality}
\end{align*}
then $G$ is infinite.
\end{theorem}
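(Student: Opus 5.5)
We take $G$ to be a finitely generated pro-$p$ group with $d=d_p(G)=\dim_{\F_p}H^1(G,\F_p)$ and $r=r_p(G)=\dim_{\F_p}H^2(G,\F_p)$. The plan is to argue by contradiction: assume $G$ is finite, so that $G$ is a finite $p$-group, and compare the growth of the augmentation filtration of the group algebra $A=\F_p[G]$ with the numerical data $d,r$. The contradiction will come from evaluating a coefficientwise inequality of power series at the given $\tau$.

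\textbf{Step 1 (presentation).} Choose a minimal presentation $1\to R\to F\to G\to 1$ with $F$ free pro-$p$ on $d$ generators. Minimality gives $R\subseteq\Phi(F)=F^p[F,F]$, and $R$ is generated as a closed normal subgroup by $r$ elements $\rho_1,\dots,\rho_r$.

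\textbf{Step 2 (algebra and filtration).} Let $I\subseteq A$ be the augmentation ideal, and set $a_n=\dim_{\F_p}I^n/I^{n+1}$ and $H_A(t)=\sum_{n\ge0}a_nt^n$. Since $G$ is a finite $p$-group, $I$ is nilpotent, so $H_A(t)$ is a polynomial with nonnegative coefficients and $a_0=1$. Via the Magnus embedding $\F_p[[F]]\cong\F_p\langle\langle x_1,\dots,x_d\rangle\rangle$ with $x_i=g_i-1$, we get $A=\F_p[[F]]/J$. Here $J$ is the closed two-sided ideal generated by the elements $\rho_j-1$. Each $\rho_j-1$ has order $\ge2$ in the $x_i$, because $\rho_j\in\Phi(F)$: commutators and $p$-th powers both land in the square of the augmentation ideal in characteristic $p$.

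\textbf{Step 3 (key inequality).} I would prove Vinberg's form of the Golod--Shafarevich inequality: the power series
\[
\frac{(1-dt+rt^2)\,H_A(t)-1}{1-t}
\]
has nonnegative coefficients. The mechanism is the beginning of the minimal free resolution
\[
A^{r}\xrightarrow{\ \partial_2\ }A^{d}\xrightarrow{\ \partial_1\ }A\longrightarrow\F_p\longrightarrow0 .
\]
This sequence is exact, and minimality means its ranks are $\dim H^1=d$ and $\dim H^2=r$. The map $\partial_1$ sends basis vectors to the $x_i$, which lie in filtration degree $1$. The map $\partial_2$ is given by Fox derivatives of the $\rho_j$, and these have entries in filtration degree $\ge1$, so relative to $\partial_1$ it raises filtration by a total of $2$. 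Filter each term by the corresponding powers of $I$, shifted by $0$, $1$, $2$ respectively. Then, for each $n$, exactness of the truncated sequence modulo $I^{n}$ yields the dimension count
\[
\dim A/I^{n}\;\ge\;\text{(contribution of }\partial_1)-\text{(contribution of }\partial_2).
\]
Precisely, this says $c_n\ge 1+d\,c_{n-1}-r\,c_{n-2}$ where $c_n=\dim A/I^n$, with the subtraction reflecting that the kernel of $\partial_1$ is at most the image of $\partial_2$. Summing these over $n$ and rewriting in terms of $\sum c_nt^n=tH_A(t)/(1-t)$ gives the displayed coefficientwise nonnegativity.

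\textbf{Step 4 (evaluate at $\tau$).} Since $H_A$ is a polynomial, both sides of the Step 3 inequality converge at every $\tau\in(0,1)$. Coefficientwise nonnegativity, together with $\tau>0$, gives $(1-d\tau+r\tau^2)H_A(\tau)\ge 1>0$. On the other hand $H_A(\tau)\ge a_0=1>0$, so the hypothesis $1-d\tau+r\tau^2<0$ forces the left side to be negative. This contradiction shows $G$ is infinite.

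The main obstacle is Step 3: making the filtration bookkeeping rigorous. One must verify that the resolution is strict enough with respect to the $I$-adic filtrations for the dimension count to go through. Specifically, one needs that the kernel of $\partial_1$ modulo $I^n$ is controlled by the image of $\partial_2$ from two filtration steps lower. I would handle this with Vinberg's lemma on filtered complexes: exactness of a filtered complex of the form $A^r\to A^d\to A$ with filtration shifts $\ge1$ already gives the inequality on dimensions of quotients, without needing strictness.
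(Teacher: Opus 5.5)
The paper gives no proof of this statement. It is only quoted from \cite[Theorem 3.4]{GSgroup-survey}, and inside the paper it is the special case of \cref{thm:GS-infinite} for a minimal presentation: there every relator lies in $D_2F=\Phi(F)$, so $H_{\mathcal R}(\tau)\le r_p(G)\tau^2$ for $\tau\in(0,1)$. The standard proof of \cref{thm:GS-infinite} goes through Vinberg's inequality for the complete algebra $\F_p[[F]]/I_{\mathcal R}\cong\F_p[[G]]$.

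Your argument is a correct, self-contained alternative, namely the classical homological proof. You use the minimal resolution $A^r\to A^d\to A\to\F_p\to0$ of $A=\F_p[G]$ and the truncated dimension count $c_n\ge 1+dc_{n-1}-rc_{n-2}$ for $n\ge1$, with $c_m=0$ for $m\le 0$. This count is equivalent to your coefficientwise inequality and makes Step~4 immediate.

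What your route buys is that it is elementary and adapts directly to the refined form the paper actually uses. You may give the $j$-th summand of $A^r$ the shift $D(\rho_j)$ instead of $2$. This is legitimate because $\rho_j-1\in\mathfrak m^{D(\rho_j)}=\bigoplus_i\mathfrak m^{D(\rho_j)-1}u_i$, where $\mathfrak m$ is the augmentation ideal of $\F_p\langle\langle u_1,\dots,u_d\rangle\rangle$, and this forces the Fox derivatives of $\rho_j$ into $\mathfrak m^{D(\rho_j)-1}$. The same count then gives $c_n\ge 1+dc_{n-1}-\sum_j c_{n-D(\rho_j)}$, which is \cref{thm:GS-infinite} for the polynomial of \cref{Defining relation}. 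The citation route, by contrast, gets both versions at once from a known inequality.

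One correction to your closing remark on Step~3: the count does use strictness, but only of $\partial_1$, and that holds automatically here. Since $I=\sum_i A(g_i-1)$, you have $\partial_1\bigl((I^{n-1})^d\bigr)=\sum_i I^{n-1}(g_i-1)=I^n$. So $\partial_1(x)\in I^n$ implies $x\in\ker\partial_1+(I^{n-1})^d=\operatorname{im}\partial_2+(I^{n-1})^d$. Hence the truncated complex $A^r/(I^{n-2})^r\to A^d/(I^{n-1})^d\to A/I^n\to\F_p\to0$ is exact at $A^d/(I^{n-1})^d$ and at $A/I^n$. This gives $c_n-1=\dim_{\F_p}I/I^n\ge dc_{n-1}-rc_{n-2}$, and no strictness of $\partial_2$ is needed. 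You should record this one-line observation rather than appeal to a general lemma ``without strictness''.
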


The usual class field tower argument applies this theorem to $\Gal(L/K)$.  The refinement used here follows Kuhnt \cite{Kuhnt}: one writes $L/K/k$, where $K/k$ is a finite Galois $p$-extension with controlled wild ramification and $L/K$ is unramified, and applies generator and relation estimates to the larger group
\[
   G=\Gal(L/k).
\]
In \cite[Proposition 5.1 and Theorem 4.1]{Iharacompositio}, an upper bound for the difference between the generator rank and the relation rank is obtained in terms of the relation rank of a finite abelian group. This bound is attained when the group is elementary abelian.

On the other hand, when the preceding Golod--Shafarevich theorem is formulated in the pro-\(p\) setting; see \cref{def:GS-grp}, the defining relations of an elementary abelian \(p\)-group are naturally graded by their Zassenhaus degrees; see \cref{def:Zass-degree}.  \cref{Defining relation} and \cref{prop:abelian-standard-relations}. More precisely, these relations contain a degree-\(2\) component and a higher degree component. The estimate in the theorem above takes into account only the contribution from relations of Zassenhaus degree \(2\). Consequently, it may be weaker than the pro-\(p\) Golod--Shafarevich formulation in \cref{def:GS-grp} and \cref{Defining relation}, which retains information from relations in all Zassenhaus degrees. This observation motivates a more detailed study of the Zassenhaus filtration of pro-\(p\) groups from \cite{GSduke,GSgroup-survey}.

\subsection{Golod-Shafarevich group and Zassenhaus filtration}
Fix a prime number $p$, and let $G$ be a finitely generated pro-$p$ group. Recall that
\[
G=\varprojlim_{N\in\Omega_p(G)} G/N,
\]
where $\Omega_p(G)$ is the set of open normal subgroups of $G$ (all of which have $p$-power index). We shall be interested in the completed group algebra $\mathbb F_p[[G]]$ which is defined as the corresponding inverse limit of $\mathbb F_p$-group algebras:
\[
\mathbb F_p[[G]]=\varprojlim_{N\in\Omega_p(G)} \mathbb F_p[G/N].
\]
Suppose that $G$ is given in the form $G=F/R$,  where $F$ is a finitely generated free pro-$p$ group, $\mathcal{R}$ is a subset of $F$ and $ R $ is the closed normal subgroup of $F$ generated by $\mathcal{R}$. By \cite[Theorem 7.11]{Kochbook}, there is a natural isomorphism
\[
\mathbb F_p[[G]]\cong \mathbb F_p[[F]]/I_\mathcal{R},
\]
where $I_\mathcal{R}$ is the closed ideal of $\mathbb F_p[[F]]$ generated by the set $\{r-1:r\in \mathcal{R}\}$.

Let $X=\{x_1,\ldots,x_d\}$ be a free generating set of $F$. By a theorem of Lazard \cite[Theorem 7.16]{Kochbook}, the completed group algebra $\mathbb F_p[[F]]$ is isomorphic to the algebra of formal power series  $\mathbb F_p\langle\!\langle u_1,\ldots,u_d\rangle\!\rangle$ in non-commuting variables, under the map $x_i\mapsto 1+u_i$ for $1\le i\le d$. Note that this map yields an embedding of $F$ into  the multiplicative group $\mathbb F_p\langle\!\langle u_1,\ldots,u_d\rangle\!\rangle^{\times}$ of $\mathbb F_p\langle\!\langle u_1,\ldots,u_d\rangle\!\rangle$. This embedding is called the Magnus embedding.

\begin{definition}
\label{def:Zass-degree}
Let $X=\{x_1,\ldots,x_d\}$ and $U=\{u_1,\ldots,u_d\}$ be finite sets of the same cardinality. Let $F$ be the free pro-$p$ group on $X$, and let $\iota:F\to \mathbb F_p\langle\!\langle U\rangle\!\rangle^{\times}$ be the Magnus embedding. Define the degree function $D:F\to \mathbb N\cup\{\infty\}$ by
\[
D(f)=\deg(\iota(f)-1),
\]
where $\deg$ is the usual degree of a power series in the variables $u_1,\ldots,u_d$ and assume $D(1)=\infty$.
\end{definition}

\begin{definition}
\label{def:GS-grp}
\begin{enumerate}[label=\textup{(\roman*)}, leftmargin=2.5em]
\item A (pro-$p$) presentation $(X,\mathcal{R})$ is said to satisfy the Golod-Shafarevich (GS for short) condition if there exists $\tau\in(0,1)$ such that $P_G(\tau)=1-|X|\tau+H_\mathcal{R}(\tau)<0$, where $H_\mathcal{R}(\tau)=\sum_{r\in \mathcal{R}} \tau^{D(r)}$ and $P_G(\tau)$ is called the Golod-Shafarevich polynomial.
\item A pro-$p$ group $G$ is called a Golod-Shafarevich group if it has a presentation satisfying the GS condition.
\end{enumerate}
\end{definition}
\begin{theorem}[Golod-Shafarevich]
\label{thm:GS-infinite}\cite[Theorem 3.1]{GSgroup-survey}
Golod-Shafarevich groups are infinite.
\end{theorem}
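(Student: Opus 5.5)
The plan is to argue by contradiction, passing from the group to its completed group algebra and comparing Hilbert series. Suppose $G$ is finite and has a presentation $(X,\mathcal R)$ with $d=|X|$ satisfying the GS condition at some $\tau\in(0,1)$. Then $\mathbb F_p[[G]]=\mathbb F_p[G]$ is finite dimensional. By \cite[Theorem 7.11]{Kochbook} and Lazard's theorem this gives
\[
\mathbb F_p[G]\cong \mathbb F_p\langle\!\langle u_1,\dots,u_d\rangle\!\rangle/I_{\mathcal R}.
\]
Let $A=\mathbb F_p\langle\!\langle U\rangle\!\rangle$ and $B=A/I_{\mathcal R}$. Filter $B$ by the images $B_n$ of the ideals $A_{\ge n}$ of series of order $\ge n$, and set $c_n=\dim_{\mathbb F_p} B/B_n$. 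Since $B$ is finite dimensional, the sequence $c_n$ stabilises at $|G|$, so $a_n=c_{n+1}-c_n$ vanishes for large $n$. Hence $h_B(t)=\sum_n a_n t^n$ is a polynomial with nonnegative coefficients and $h_B(\tau)\ge a_0=1>0$.

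The key step is the Golod--Shafarevich--Vinberg inequality. I expect to show, coefficientwise, that
\[
\frac{h_B(t)\,(1-dt+H_{\mathcal R}(t))}{1-t}\;\ge\;\frac{1}{1-t}.
\]
In terms of the truncated dimensions this reads
\[
c_n\;\ge\;1+d\,c_{n-1}-\sum_{r\in\mathcal R} c_{n-D(r)}\qquad\text{for all } n,
\]
with $c_m=0$ for $m\le 0$. To prove it I would use the augmentation ideal $\omega_B=\sum_i u_iB$, so that $B=\mathbb F_p\oplus \sum_i u_i B$. Every element of $I_{\mathcal R}$ is a combination $\sum_r a_r(r-1)b_r$, and $r-1$ has order $D(r)$. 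From this, the kernel of the surjection
\[
\bigoplus_{i=1}^d B\longrightarrow \omega_B,\qquad (b_i)\mapsto \sum_i u_ib_i,
\]
is controlled by the relations: modulo the filtration, it is covered by the images of $\bigoplus_{r} B$ shifted by $D(r)$. This is the usual right-exact sequence
\[
\bigoplus_{r\in\mathcal R}B(-D(r))\to\bigoplus_{i}B(-1)\to B\to\mathbb F_p\to 0.
\]
Counting dimensions in the truncation below degree $n$ then gives the displayed recursion.

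Given the inequality, evaluate at $\tau$. All the series involved converge for $\tau\in(0,1)$. This uses that $h_B$ is a polynomial and that $H_{\mathcal R}(\tau)$ converges, which holds because $P_G(\tau)$ is assumed defined; for infinite $\mathcal R$ only finitely many relations have each given degree. Multiplying a coefficientwise inequality between power series with the same nonnegative weights $\tau^n$ preserves it. The left side equals $h_B(\tau)P_G(\tau)/(1-\tau)<0$, while the right side is $1/(1-\tau)>0$, a contradiction. Hence $G$ is infinite.

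The main obstacle is the exactness and dimension count in the filtered, non-graded setting. There $r-1$ is not homogeneous, only of order $D(r)$. I would try first to pass to the associated graded algebra $\operatorname{gr}B$, which is a quotient of the free graded algebra $\mathbb F_p\langle U\rangle$ by an ideal containing the leading forms of the $r-1$. The point is that $\operatorname{gr}B$ is dominated coefficientwise by $\mathbb F_p\langle U\rangle/(\text{leading forms})$. For a graded algebra defined by homogeneous relations of degrees $D(r)$, the recursion $c_n\ge 1+d\,c_{n-1}-\sum_r c_{n-D(r)}$ follows from the graded exact sequence above, since the image in degree $n$ has dimension at most $\sum_r \dim(\operatorname{gr}B)_{n-D(r)}$.
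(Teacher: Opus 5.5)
Your overall route (identify $\mathbb F_p[[G]]$ with $B=A/I_{\mathcal R}$, prove the cumulative Vinberg inequality $c_n\ge 1+d\,c_{n-1}-\sum_r c_{n-D(r)}$, and evaluate at $\tau$) is the standard proof of the result, which the paper only cites from \cite[Theorem 3.1]{GSgroup-survey}. Your closing bookkeeping (stabilisation of $c_n$, $a_0=1$, absolute convergence at $\tau$) is fine.

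The gap is in how you propose to get past the obstacle you identify: passing to leading forms goes the wrong way. Here $\operatorname{gr}B=\mathbb F_p\langle U\rangle/\operatorname{gr}(I_{\mathcal R})$, where $\operatorname{gr}(I_{\mathcal R})$ consists of the leading forms of \emph{all} elements of $I_{\mathcal R}$. It contains the ideal $J$ generated by the leading forms of the $r-1$, in general strictly, because leading forms cancel in combinations and expose new ones. So $\dim(\operatorname{gr}B)_n\le\dim(\mathbb F_p\langle U\rangle/J)_n$ is an \emph{upper} bound, whereas you need a lower bound. The graded inequality for $\mathbb F_p\langle U\rangle/J$ only shows that this larger algebra is infinite.

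For instance, take $G=\langle x,y\mid x,\ xy^{p^k}\rangle\cong\Z/p^k\Z$ with $x\mapsto 1+u$, $y\mapsto 1+v$. Both relations have leading form $u$, so $\mathbb F_p\langle U\rangle/J\cong\mathbb F_p[v]$ is infinite-dimensional, while $\dim B=p^k$.

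Nor can you run the graded exact sequence on $\operatorname{gr}B$ itself with only the terms $\operatorname{gr}B(-D(r))$ on the left. The kernel of $\bigoplus_i\operatorname{gr}B(-1)\to\omega_{\operatorname{gr}B}$ is governed by a generating set of $\operatorname{gr}(I_{\mathcal R})$, whose extra generators are not counted by $H_{\mathcal R}$. Your last sentence tacitly identifies these two algebras.

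The repair is to carry out the count from your main paragraph directly on the finite-dimensional truncations $B/B_n=A/(I_{\mathcal R}+A_{\ge n})$, with no passage to $\operatorname{gr}$.

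\begin{enumerate}
\item Let $I^{\mathrm{alg}}$ be the algebraic two-sided ideal generated by the $r-1$. Then $I_{\mathcal R}+A_{\ge n}=I^{\mathrm{alg}}+A_{\ge n}$.
\item Splitting off the constant term of each left factor gives $I^{\mathrm{alg}}\subseteq\sum_r(r-1)A+\sum_i u_iI^{\mathrm{alg}}$.
\item Write $r-1=\sum_i u_if_{r,i}$ with $f_{r,i}\in A_{\ge D(r)-1}$. If $\sum_i u_iz_i\in I_{\mathcal R}+A_{\ge n}$, uniqueness of coordinates in $A_{\ge1}=\bigoplus_i u_iA$ gives $z_i\equiv\sum_r f_{r,i}y_r$ modulo $I_{\mathcal R}+A_{\ge n-1}$ for suitable $y_r\in A$.
\end{enumerate}

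Hence, in
\[
\bigoplus_{r\in\mathcal R}B/B_{n-D(r)}\xrightarrow{\ \Phi\ }\bigoplus_{i=1}^{d}B/B_{n-1}\xrightarrow{\ \Psi\ }\omega_B/B_n\longrightarrow 0,
\qquad
\Phi\bigl((y_r)_r\bigr)=\Bigl(\sum_r f_{r,i}y_r\Bigr)_i,
\quad
\Psi\bigl((z_i)_i\bigr)=\sum_i u_iz_i,
\]
both maps are well defined, $\Psi$ is onto, and $\ker\Psi=\operatorname{im}\Phi$. Only the finitely many $r$ with $D(r)<n$ contribute, since $B/B_m=0$ for $m\le0$. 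As $\dim\omega_B/B_n=c_n-1$, this gives $c_n-1\ge d\,c_{n-1}-\sum_r c_{n-D(r)}$ at once.

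This is Vinberg's argument. It is precisely why the inequality comes out in the cumulative form with the factor $1/(1-t)$, which is all your final evaluation at $\tau$ requires.
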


Now we introduce the Zassenhaus filtration of G.

\begin{definition}
Let $G$ be a finitely generated pro-$p$ group. Let $M$ be the augmentation ideal of the completed group algebra $\mathbb F_p[[G]]$, that is, $M$ is the ideal generated by the set $\{g-1:g\in G\}$. For each $n\in\mathbb N$, let $D_nG=\{g\in G:g-1\in M^n\}$. The series $\{D_nG\}$ is called the Zassenhaus filtration of $G$.
\end{definition}

\begin{remark}
    If $F$ is a finitely generated free pro-$p$ group and $f\in F\setminus\{1\}$, it is easy to see that $D(f)=n$ if and only if $f\in D_nF\setminus D_{n+1}F$. In particular, this shows that the function $D$ does not depend on the choice of a free generating set $X$ of $F$.
\end{remark}

Let $\gamma_iG$ denote the $i$-th term of the lower central series of $G$.
Thus the lower central series is defined recursively by $\gamma_1G=G$ and
\[
\gamma_{i+1}G=[\gamma_iG,G]
\]
for $i\geq 1$, here $[\gamma_iG,G]$ is the commutator group of $\gamma_iG$ and $G$.

\begin{proposition}\label{Zassenhau filtration}
\cite[Proposition 3.2]{GSgroup-survey}
The Zassenhaus filtration $\{D_nG\}$ can alternatively be described by
\[
D_nG=\prod_{i\cdot p^j\ge n}(\gamma_iG)^{p^j}.
\]
\end{proposition}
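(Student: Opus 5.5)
The plan is to set $E_nG=\prod_{i\cdot p^j\ge n}(\gamma_iG)^{p^j}$, meaning the closed subgroup generated by these power subgroups, and to prove $E_nG\subseteq D_nG$ and $D_nG\subseteq E_nG$ separately. First I will reduce to finite $p$-groups. Both families of subgroups are closed and behave well under continuous surjections $G\to G/N$. For $E_n$ this is immediate from the definition, since the image of $\gamma_iG$ is $\gamma_i(G/N)$. For $D_n$ the image of the augmentation ideal $M$ is the augmentation ideal of $\mathbb F_p[G/N]$, and a compactness argument in $\mathbb F_p[[G]]=\varprojlim \mathbb F_p[G/N]$ gives $D_nG=\varprojlim D_n(G/N)$. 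Hence it suffices to prove the identity for finite $p$-groups, where it is Jennings' theorem; the pro-$p$ statement is then Lazard's extension.

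The inclusion $E_n\subseteq D_n$ is elementary, and I will derive it from two properties of the Zassenhaus filtration. The first is $[D_aG,D_bG]\subseteq D_{a+b}G$. This follows from the identity
\[
[g,h]-1=g^{-1}h^{-1}\bigl((g-1)(h-1)-(h-1)(g-1)\bigr),
\]
which places $[g,h]-1$ in $M^{a+b}$ when $g-1\in M^a$ and $h-1\in M^b$. The second is $(D_aG)^p\subseteq D_{ap}G$, because $g^p-1=(g-1)^p$ in characteristic $p$. Induction on $i$ using the first property gives $\gamma_iG\subseteq D_iG$. Iterating the second property then gives $(\gamma_iG)^{p^j}\subseteq D_{ip^j}G\subseteq D_nG$ whenever $ip^j\ge n$. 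Since $D_nG$ is a closed subgroup, $E_nG\subseteq D_nG$ follows.

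The reverse inclusion is the main obstacle. For a finite $p$-group I will follow Jennings' argument. The filtration $(E_n)$ is the fastest-descending series satisfying $[E_a,E_b]\subseteq E_{a+b}$ and $E_a^p\subseteq E_{ap}$ with $E_1=G$. Consequently $\mathrm{gr}\,E=\bigoplus_n E_n/E_{n+1}$ is a restricted Lie algebra over $\mathbb F_p$, with $p$-map induced by $g\mapsto g^p$. For each $n$ choose elements $g_{n,1},\dots,g_{n,r_n}$ of $E_n$ whose images form an $\mathbb F_p$-basis of $E_n/E_{n+1}$, and assign the weight $n$ to $g_{n,s}$. The key claim is that the ordered products $\prod (g_{n,s}-1)^{a_{n,s}}$ with $0\le a_{n,s}<p$ form a basis of $\mathbb F_p[G]$. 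Moreover, those of total weight $\sum n\,a_{n,s}\ge m$ should span $M^m$.

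I would prove this in two steps. Spanning comes from rewriting products via the commutator and $p$-power relations, which never lowers weight. The count matches $|G|=\prod p^{r_n}$, so spanning gives linear independence as well. Equivalently, the natural map from the restricted enveloping algebra $u(\mathrm{gr}\,E)$ to $\mathrm{gr}_M\mathbb F_p[G]$ is an isomorphism, by restricted PBW together with this dimension count. Granting the claim, take $g\notin E_{n+1}$ with $g\in E_m\setminus E_{m+1}$ for some $m\le n$. Then $g-1$ is congruent modulo $M^{m+1}$ to a nonzero combination of the weight-$m$ basis elements $g_{m,s}-1$. Hence $g-1\notin M^{m+1}\supseteq M^{n+1}$, so $g\notin D_{n+1}$, which gives $D_{n+1}\subseteq E_{n+1}$. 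The delicate point is the weight bookkeeping in the spanning step, and this is where I expect most of the work.
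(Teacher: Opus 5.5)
Your reduction to finite quotients is fine (for finitely generated $G$ each $M^n$ is closed, which is what the compactness argument needs), and your proof of $E_nG\subseteq D_nG$ is complete. The paper gives no argument of its own, only a citation to Ershov's survey (the Jennings--Lazard theorem), so your sketch has to stand on its own. The gap is in the reverse inclusion, at the sentence asserting that $(E_n)$ is the fastest-descending series with $[E_a,E_b]\subseteq E_{a+b}$ and $E_a^p\subseteq E_{ap}$.

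Only the minimality half of that sentence is easy: your argument for $D_n$ works verbatim for any such series. That $E_n=\prod_{ip^j\ge n}(\gamma_iG)^{p^j}$ satisfies the two conditions at all is not formal. $E_a$ is generated by elements $x^{p^j}$ with $x\in\gamma_iG$ and $ip^j\ge a$. You must show that commutators $[x^{p^j},y^{p^l}]$, and $p$-th powers of products of such generators, land in the correct terms. This is Lazard's contribution to the theorem, and it needs commutator collection via the Hall--Petrescu formula, e.g.\ writing $[x^p,y]=x^{-p}(x[x,y])^p$ and expanding. Everything after that sentence depends on it:
\begin{itemize}
\item the existence of the $g_{n,s}$, which needs $E_n/E_{n+1}$ to be elementary abelian;
\item the count $|G|=\prod p^{r_n}$;
\item the restricted Lie structure on $\mathrm{gr}\,E$;
\item the claim that rewriting ``never lowers weight'', which uses $[g_{a,s},g_{b,t}]\in E_{a+b}$ and $g_{a,s}^p\in E_{ap}$.
\end{itemize}

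Moreover, the unproved fact is essentially the whole proposition, while the step you single out as the main work is routine. By your own identities $D$ is such a series, so $D_n=E_n$ would imply the fact. Conversely, once $(E_n)$ is known to satisfy the two conditions, your ``equivalently'' formulation finishes at once:
\begin{itemize}
\item the map $u(\mathrm{gr}\,E)\to\mathrm{gr}_M\mathbb F_p[G]$ is surjective, because $\mathrm{gr}_M\mathbb F_p[G]$ is generated by $M/M^2$, which is the image of $E_1/E_2$;
\item both sides have dimension $|G|$, so the map is injective;
\item in particular each $E_m/E_{m+1}\to M^m/M^{m+1}$ is injective, i.e.\ $E_m\cap D_{m+1}\subseteq E_{m+1}$;
\item induction on $m$ then gives $D_n\subseteq E_n$.
\end{itemize}
No weight bookkeeping in $\mathbb F_p[G]$ is needed. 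As written, therefore, your hard direction assumes a statement equivalent, modulo routine steps, to what is to be proved. To close the gap you must supply the Hall--Petrescu argument that $(E_n)$ satisfies $[E_a,E_b]\subseteq E_{a+b}$ and $E_a^p\subseteq E_{ap}$, or else cite Lazard for it, as the paper effectively does.
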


\subsection{Zassenhaus filtration on the relation module}
 In this article, we assume that $$ \dim_{\Fp}H^2(G,\Fp)<\infty.$$For convenience, subgroups generated by subsets of pro-$p$ groups are understood to be topologically generated and hence closed. In this section, we always assume that the finitely generated pro-$p$ group G admits a minimal pro-\(p\) presentation as follows:
\[
        1\to R\to F\to G\to1,
\]
where \(F\) is a free pro-\(p\) group.  We refer to $R/R^p[R,F]$ as the \emph{relation module} of \(G\) with respect to the chosen minimal presentation. The Pontryagin dual of the relation module $(R/R^p[R,F])^\vee$ can be identified with the continuous $ \mathbb F_p$-linear dual of the relation module $\Hom_{\mathrm{cont}}\bigl(R/R^p[R,F],\Fp\bigr)$ naturally. Moreover, we have the following duality result.
\begin{lemma}
\label{lem:relation mod dual}
One has
\[
        H^2(G,\Fp)\simeq (R/R^p[R,F])^\vee.
\]
In particular, the relation rank of $G$ is
\[
        r_p(G)=\dim_{\Fp}H^2(G,\Fp)
        =\dim_{\Fp}R/R^p[R,F].
\]
\end{lemma}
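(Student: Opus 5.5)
The plan is to use the five-term inflation–restriction sequence for the extension $1\to R\to F\to G\to 1$ with coefficients $\Fp$ (trivial action). This sequence reads
\[
0\to H^1(G,\Fp)\xrightarrow{\infmap} H^1(F,\Fp)\xrightarrow{\res} H^1(R,\Fp)^{G}\xrightarrow{\tg} H^2(G,\Fp)\xrightarrow{\infmap} H^2(F,\Fp).
\]
Since $F$ is free pro-$p$, it has cohomological dimension $\le 1$, so $H^2(F,\Fp)=0$ and the transgression $\tg$ is surjective. Minimality of the presentation means $R\subseteq \Phi(F)=F^p[F,F]$. Equivalently, $F$ and $G$ have the same Frattini quotient, so the inflation $H^1(G,\Fp)\to H^1(F,\Fp)$ is an isomorphism; both spaces have dimension $d_p(G)$. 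Exactness then forces $\res=0$, so $\tg$ is injective. Hence $\tg$ is an isomorphism $H^1(R,\Fp)^G\simeq H^2(G,\Fp)$.

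It remains to identify $H^1(R,\Fp)^G$ with the dual of the relation module. We have $H^1(R,\Fp)=\Hom_{\mathrm{cont}}(R,\Fp)$, the continuous homomorphisms, and any such map kills $R^p[R,R]$. The action of $G=F/R$ is induced by conjugation of $F$ on $R$. A homomorphism $\varphi$ is $G$-invariant exactly when $\varphi(frf^{-1})=\varphi(r)$ for all $f\in F$, $r\in R$, that is, when $\varphi$ vanishes on $[R,F]$. Therefore
\[
H^1(R,\Fp)^G=\Hom_{\mathrm{cont}}(R/R^p[R,F],\Fp)=(R/R^p[R,F])^\vee .
\]
Composing with $\tg$ gives the claimed isomorphism $H^2(G,\Fp)\simeq (R/R^p[R,F])^\vee$.

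For the rank statement, take $\Fp$-dimensions. The standing assumption $\dim_{\Fp}H^2(G,\Fp)<\infty$ says that the dual of the profinite elementary abelian group $R/R^p[R,F]$ is finite. By Pontryagin duality, $R/R^p[R,F]$ is then a finite $\Fp$-vector space of the same dimension. This dimension is by definition $r_p(G)$: it is the minimal number of generators of $R$ as a closed normal subgroup of $F$, by the Burnside basis theorem applied in the category of $F$-operator groups.

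The only delicate point is the vanishing of the restriction map, which is exactly where minimality $R\subseteq\Phi(F)$ enters. Everything else is standard; see for instance \cite{Kochbook}.
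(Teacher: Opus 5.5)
Your proposal is correct and follows the paper's route: the five-term sequence, $H^2(F,\Fp)=0$, minimality making inflation on $H^1$ an isomorphism so that transgression is an isomorphism, and the identification of $G$-invariant homomorphisms $R\to\Fp$ with those vanishing on $R^p[R,F]$. You are slightly more explicit than the paper about why the restriction map vanishes and how the finiteness of $H^2(G,\Fp)$ lets you pass from the dual to the module itself.
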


\begin{proof}
The Hochschild--Serre five-term exact sequence gives
\[
0\to H^1(G,\Fp)\to H^1(F,\Fp)\to H^1(R,\Fp)^G
\to H^2(G,\Fp)\to H^2(F,\Fp).
\]
Since \(F\) is free pro-\(p\), we have $H^2(F,\Fp)=0.$ Since the presentation is minimal, the map $H^1(G,\Fp)\to H^1(F,\Fp)$ is an isomorphism.  Hence, we have $H^2(G,\Fp)\simeq H^1(R,\Fp)^G.$ Now $H^1(R,\Fp)=\Hom_{\mathrm{cont}}(R,\Fp),$ and taking \(G\)-invariants is the same as killing conjugation by \(F\).  Therefore, we have
\[
        H^1(R,\Fp)^G
        =
        \Hom_{\mathrm{cont}}\bigl(R/R^p[R,F],\Fp\bigr).
\]
\end{proof}
To define the Zassenhaus filtration on $H^2(G,\Fp)$, we first introduce the corresponding filtration on the relation module. We then use the above duality to induce a Zassenhaus filtration on $H^2(G,\Fp)$.

\begin{definition}
     The Zassenhaus filtration on \(F\) induces the filtration on $R/R^p[R,F]$ as follows
\[
        \operatorname{Fil}_n(R/R^p[R,F]):=
        \im\left(R\cap D_nF\longrightarrow R/R^p[R,F] \right), \text{ for }{n\geq2 }.
\]
The graded relation modules are defined by  $\operatorname{gr}_n(R/R^p[R,F])=\frac{  \operatorname{Fil}_n(R/R^p[R,F])}{  \operatorname{Fil}_{n+1}(R/R^p[R,F])}\text{ for } {n\geq2 }$.
\end{definition}
\begin{remark}
    According to \cref{Zassenhau filtration} and \cite[Section 2.3]{GSduke}, $D_2F=F^p[F,F]$ and $R\subseteq D_2F$. Thus we have
\[
        R/R^p[R,F]= \operatorname{Fil}_2(R/R^p[R,F])\supseteq  \operatorname{Fil}_3(R/R^p[R,F])\supseteq \operatorname{Fil}_4(R/R^p[R,F])\supseteq\cdots.
\]For \(n\ge2\), let
\[
        s_n(G):=
        \dim_{\Fp}\operatorname{gr}_n(R/R^p[R,F]).
\]
By \cref{lem:relation mod dual}, we obtain $\sum_{n\ge2}s_n(G)
        =\dim_{\Fp}R/R^p[R,F]= \dim_{\Fp}H^2(G,\Fp)=r_p(G)$.
\end{remark}
\begin{definition}

\label{def:filtrationH2}
    Fix the isomorphism $  \phi_G: H^2(G,\Fp)\simeq (R/R^p[R,F])^\vee$. For $N\geq 1$, define
\begin{align*}
        \operatorname{Fil}_NH^2(G,\Fp):&=\phi_G^{-1}(( \operatorname{Fil}_{N+1}(R/R^p[R,F]))^\perp)\\
        &=
        \{\alpha\in H^2(G,\Fp) :\phi_G(\alpha)( \operatorname{Fil}_{N+1}(R/R^p[R,F]))=0\}.
\end{align*}
Define $\operatorname{gr}_NH^2(G,\Fp)
:=
\frac{\operatorname{Fil}_NH^2(G,\Fp)}{\operatorname{Fil}_{N-1}H^2(G,\Fp)}\text{ for }{N\geq 2}$.
\end{definition}

The following proposition describes the precise relationship between the Zassenhaus filtration on $H^2(G,\Fp) $ and the Zassenhaus filtration on the relation module $(R/R^p[R,F])^\vee$. In particular, it shows that the above duality is compatible with these filtration.

\begin{proposition}
\label{prop:fildual}
   For every \(N\ge2\), one has
\[
\operatorname{gr}_NH^2(G,\Fp)
\simeq
(\operatorname{gr}_N(R/R^p[R,F]))^\vee.
\]
Moreover, we have the following dimension formula
\[
        \dim_{\Fp}\operatorname{Fil}_NH^2(G,\Fp)
        =
        \sum_{n=2}^{N}s_n(G).
\]
\end{proposition}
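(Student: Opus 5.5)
The plan is to reduce everything to finite-dimensional linear algebra over \(\Fp\). Write \(V:=R/R^p[R,F]\). By the standing assumption \(\dim_{\Fp}H^2(G,\Fp)<\infty\) and \cref{lem:relation mod dual}, \(V\) is a finite-dimensional \(\Fp\)-vector space. Its continuous dual \(V^\vee=\Hom_{\mathrm{cont}}(V,\Fp)\) is then the ordinary linear dual. Via the fixed isomorphism \(\phi_G\), \cref{def:filtrationH2} identifies \(\operatorname{Fil}_NH^2(G,\Fp)\) with the annihilator \(W_{N+1}^\perp\subseteq V^\vee\), where \(W_n:=\operatorname{Fil}_n(V)\). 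So it suffices to prove both claims for the subspaces \(W_{N+1}^\perp\) of \(V^\vee\), and then transport them back along \(\phi_G\).

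First, I will record the order structure. The subspaces satisfy
\[
V=W_2\supseteq W_3\supseteq\cdots,
\]
which is the remark following the definition of the filtration on \(V\). Taking annihilators reverses inclusions, so
\[
0=W_2^\perp\subseteq W_3^\perp\subseteq W_4^\perp\subseteq\cdots .
\]
Thus \(\operatorname{Fil}_1H^2=0\), the filtration on \(H^2\) is increasing, and the quotient \(\operatorname{gr}_NH^2=\operatorname{Fil}_N/\operatorname{Fil}_{N-1}\) makes sense. Under \(\phi_G\) this quotient becomes \(W_{N+1}^\perp/W_N^\perp\).

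Next comes the key linear-algebra step. For subspaces \(B\subseteq A\subseteq V\) of a finite-dimensional space, I claim
\[
B^\perp/A^\perp\cong (A/B)^\vee .
\]
To see this, consider the restriction map \(B^\perp\to A^\vee\), \(\lambda\mapsto\lambda|_A\). Its image lies in the subspace of functionals vanishing on \(B\), which is \((A/B)^\vee\). It is surjective onto \((A/B)^\vee\), because every functional on \(A\) that vanishes on \(B\) extends to a functional on \(V\), after choosing a complement of \(A\) in \(V\). Its kernel is exactly \(A^\perp\). Applying this with \(A=W_N\) and \(B=W_{N+1}\) gives
\[
\operatorname{gr}_NH^2(G,\Fp)\cong (W_N/W_{N+1})^\vee=(\operatorname{gr}_N V)^\vee,
\]
which is the first statement.

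For the dimension formula, use \(\dim W_{N+1}^\perp=\dim V-\dim W_{N+1}\). Since \(W_2=V\), the sum telescopes:
\[
\dim V-\dim W_{N+1}=\sum_{n=2}^{N}\bigl(\dim W_n-\dim W_{n+1}\bigr)=\sum_{n=2}^N s_n(G).
\]
Alternatively, one can sum the dimensions of the graded pieces from the first part, starting from \(\operatorname{Fil}_1H^2=0\).

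I do not expect a real obstacle. The only points needing care are two. The first is that finiteness of \(\dim V\) justifies identifying the Pontryagin or continuous dual with the algebraic dual, so that the extension of functionals is available. The second is the index shift \(N\leftrightarrow N+1\) in \cref{def:filtrationH2}, which is what makes \(\operatorname{gr}_N\) on the cohomology side correspond to \(\operatorname{gr}_N\) rather than \(\operatorname{gr}_{N+1}\) on the relation-module side. Note also that separatedness of the filtration on \(V\) (i.e. \(\bigcap_n W_n=0\)) is not needed for either claim.
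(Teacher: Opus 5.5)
Your proposal is correct and follows essentially the same route as the paper: transport along $\phi_G$ to $W_{N+1}^\perp/W_N^\perp$, identify this with $(W_N/W_{N+1})^\vee$ via the restriction map (kernel $W_N^\perp$), and then count dimensions. The differences are only cosmetic. You justify surjectivity of the restriction by extending functionals along a complement, which the paper merely asserts. You also obtain the dimension formula by telescoping $\dim V-\dim W_{N+1}$, whereas the paper sums graded pieces starting from $\operatorname{Fil}_1H^2=0$.
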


\begin{proof}
For every \(N\geq 2,\) $\phi_G\bigl(\operatorname{Fil}_NH^2(G,\Fp)\bigr)
=
(\operatorname{Fil}_{N+1}(R/R^p[R,F]))^\perp.$ Therefore, \(\phi_G\) induces an isomorphism
\[
\operatorname{gr}_N H^2(G,\Fp)
=
\frac{\operatorname{Fil}_NH^2(G,\Fp)}
{\operatorname{Fil}_{N-1}H^2(G,\Fp)}
\xrightarrow{\sim}
\frac{(\operatorname{Fil}_{N+1}(R/R^p[R,F]))^\perp}
{(\operatorname{Fil}_{N}(R/R^p[R,F]))^\perp}
.
\]
Consider the restriction map
\[
(\operatorname{Fil}_{N+1}(R/R^p[R,F]))^\perp
\longrightarrow
\left(
\frac{\operatorname{Fil}_N(R/R^p[R,F])}
{\operatorname{Fil}_{N+1}(R/R^p[R,F])}
\right)^\vee,
\]
given by
\[
\lambda\longmapsto
\left(
x+\operatorname{Fil}_{N+1}(R/R^p[R,F])
\longmapsto
\lambda(x)
\right).
\]
This map is well defined and surjective. Its kernel consists precisely of
those $\lambda\in (R/R^p[R,F])^\vee$ which vanish on \(\operatorname{Fil}_N(R/R^p[R,F])\), that is $(\operatorname{Fil}_N(R/R^p[R,F]))^\perp.$ Therefore,
\[
\operatorname{gr}_N H^2(G,\Fp)
\simeq
\frac{(\operatorname{Fil}_{N+1}(R/R^p[R,F]))^\perp}
{(\operatorname{Fil}_{N}(R/R^p[R,F]))^\perp}
\simeq
\left(
\frac{\operatorname{Fil}_N(R/R^p[R,F])}
{\operatorname{Fil}_{N+1}(R/R^p[R,F])}
\right)^\vee.
\]
Since $\operatorname{Fil}_1H^2(G,\Fp)
=
\phi_G^{-1}\bigl((\operatorname{Fil}_2(R/R^p[R,F])^\perp\bigr)
=
\phi_G^{-1}(R/R^p[R,F]^\perp)
=0,$
we have
\begin{align*}
\dim_{\Fp}\operatorname{Fil}_NH^2(G,\Fp)
=
\sum_{n=2}^{N}
\dim_{\Fp}
\frac{\operatorname{Fil}_nH^2(G,\Fp)}
{\operatorname{Fil}_{n-1}H^2(G,\Fp)}=\sum_{n=2}^{N}
\dim_{\Fp}
\frac{\operatorname{Fil}_n(R/R^p[R,F])}
{\operatorname{Fil}_{n+1}(R/R^p[R,F])}
=\sum_{n=2}^{N}s_n(G).
\end{align*}
\end{proof}

To provide an effective method for computing the Golod--Shafarevich polynomials introduced in \cref{def:GS-grp}, we prove the following proposition.

\begin{proposition}\label{Defining relation}
There exists a minimal system of defining relations
\[\mathcal R=\{r_{n,j}:n\geq 2,\ 1\leq j\leq s_n(G)\}
        \subset R\]
adapted to the Zassenhaus filtration such that  $ r_{n,j}\in D_nF\setminus D_{n+1}F.$ In particular, with respect to this minimal system of relations, the corresponding Golod--Shafarevich polynomial is given by
\[
        P_G(\tau)=1-d_p(G)\tau+\sum_{n\geq 2}s_n(G)\tau^n.
\]
\end{proposition}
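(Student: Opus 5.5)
The plan is to lift a basis of the relation module adapted to the filtration $\operatorname{Fil}_n(R/R^p[R,F])$ back to elements of $R$ of the correct Zassenhaus degree, and then check that the lifts form a minimal system of defining relations. For each $n\ge2$, choose elements $\bar r_{n,1},\dots,\bar r_{n,s_n(G)}\in\operatorname{Fil}_n(R/R^p[R,F])$ whose images form an $\Fp$-basis of $\operatorname{gr}_n(R/R^p[R,F])$. Since $r_p(G)=\sum_n s_n(G)$ is finite, only finitely many $n$ occur, and the filtration is eventually zero. The union of all the $\bar r_{n,j}$ is then a basis of $R/R^p[R,F]$: this is standard linear algebra for a finite, exhaustive, separated filtration.

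\textbf{Step 1: lifting to $R\cap D_nF$.} By the definition of $\operatorname{Fil}_n$, each $\bar r_{n,j}$ is the image of some $r_{n,j}\in R\cap D_nF$, so $D(r_{n,j})\ge n$. To rule out $D(r_{n,j})\ge n+1$, suppose $r_{n,j}\in D_{n+1}F$. Then $\bar r_{n,j}\in\operatorname{Fil}_{n+1}$, so its class in $\operatorname{gr}_n$ vanishes. This contradicts the choice of a basis, since basis elements are nonzero. Hence $r_{n,j}\in D_nF\setminus D_{n+1}F$, and by the Remark after the definition of the Zassenhaus filtration, $D(r_{n,j})=n$.

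\textbf{Step 2: the lifts form a minimal system of defining relations.} Let $R'$ be the closed normal subgroup of $F$ generated by $\mathcal R$. Since the images of $\mathcal R$ span $R/R^p[R,F]$, we have $R=R'\,R^p[R,F]$. The key input here is the pro-$p$ Frattini-type statement, whose standard source is Koch's book, and I expect this to be the main point requiring care. It says that a closed normal subgroup $R'\subseteq R$ of $F$ with $R'R^p[R,F]=R$ must equal $R$. The reason is that $R^p[R,F]$ is the Frattini subgroup of $R$ as an $F$-operator group. Concretely, $R/R'$ is a pro-$p$ group on which $F$ acts, and its $F$-coinvariants modulo $p$ vanish. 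By the pro-$p$ Nakayama lemma for the compact $\Fp[[F]]$-module $(R/R')^{ab}/p$, the abelianization is trivial, and hence the pro-$p$ group $R/R'$ is trivial. Minimality follows because the number of elements of $\mathcal R$ equals $\dim_{\Fp}R/R^p[R,F]=r_p(G)$ by \cref{lem:relation mod dual}.

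\textbf{Step 3: the Golod--Shafarevich polynomial.} The presentation $(X,\mathcal R)$ is minimal, so $|X|=d_p(G)$. Substituting $D(r_{n,j})=n$ into \cref{def:GS-grp} gives
\[
H_{\mathcal R}(\tau)=\sum_{n\ge2}s_n(G)\tau^n.
\]
This yields the stated formula for $P_G(\tau)$.
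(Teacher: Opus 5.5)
Your proposal is correct and follows the paper's proof: an adapted basis of each $\operatorname{gr}_n$, lifts in $R\cap D_nF$ of exact degree $n$, generation of $R$, then reading off $P_G$. The only real variation is the generation step, where your Frattini/Nakayama argument on $(R/R')^{\mathrm{ab}}/p$ replaces the paper's injectivity of $H^1(R,\Fp)^G\to H^1(N,\Fp)$ plus Koch's Theorem 6.8; these are dual formulations of the same fact.

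One small point to tighten. Finiteness of $\dim_{\Fp}R/R^p[R,F]$ alone only makes the filtration stabilize, so its eventual vanishing really rests on separatedness. The paper obtains this from $R\cap D_{N_0}F\subseteq R^p[R,F]$ (Koch, \S7.6). It also follows from compactness, since $R^p[R,F]$ is open in $R$ and the $D_nF$ form a neighbourhood basis of $1$ in $F$.
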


\begin{proof}
Since \(R/R^p[R,F]\) is finite-dimensional over \(\mathbb F_p\), there exists a positive integer \(N_0\) such that $R\cap D_{N_0}F\subseteq R^p[R,F]$ by \cite[Section 7.6]{Kochbook}. Thus, we have $ \operatorname{Fil}_{N_0}(R/R^p[R,F])=0.$ For each \(n\geq 2\), if $s_n(G)\neq 0$, then we can choose elements  $e_{n,1},\ldots,e_{n,s_n(G)}\in \operatorname{Fil}_n(R/R^p[R,F])$ whose images form a basis of $\operatorname{Fil}_n(R/R^p[R,F])/ \operatorname{Fil}_{n+1}(R/R^p[R,F]).$
Otherwise, no choice is needed.

We claim that all the elements \(e_{n,j}\), for all \(n\) and \(j\), form a basis of \(R/R^p[R,F]\). Suppose that there exist $a_{n,j}\in \mathbb F_p$ such that
\[\sum_{n,j} a_{n,j}e_{n,j}=0.\]
Assume that \(n_0\)  is the smallest integer such that \(a_{n_0,j}\neq 0\) for some $1\le j\le s_n(G)$. Reducing the above equation modulo \(\operatorname{Fil}_{n_0+1}(R/R^p[R,F])\), all terms with \(n>n_0\) vanish and
\[
        \sum_j a_{n_0,j}\overline e_{n_0,j}=0
\]
in $ \operatorname{Fil}_{n_0}(R/R^p[R,F])/\operatorname{Fil}_{n_0+1}(R/R^p[R,F]).$ Thus $a_{n_0,j}=0$ for all \(j\) which is a contradiction. Hence, \(e_{n,j}\) are linearly independent over $\mathbb F_p$. For any \(x\in R/R^p[R,F]=\operatorname{Fil}_2(R/R^p[R,F])\), the element \(x\) modulo \(\operatorname{Fil}_3(R/R^p[R,F])\) is a linear combination of the images of \(e_{2,j}\) with $1\le j\le s_2(G)$. Subtracting the corresponding linear combination of the \(e_{2,j}\), we get an element of \(\operatorname{Fil}_3(R/R^p[R,F])\). Repeating the same argument modulo \(\operatorname{Fil}_4(R/R^p[R,F])\), \(\operatorname{Fil}_5(R/R^p[R,F])\) and so on. Since the filtration is finite, we can express \(x\) as a linear combination of the \(e_{n,j}\). Thus, \(e_{n,j}\) form a basis of \(R/R^p[R,F]\).

Now, by the definition of \(\operatorname{Fil}_n(R/R^p[R,F])\), for each \(e_{n,j}\) we may choose an element $  r_{n,j}\in R\cap D_nF$ whose image in \(R/R^p[R,F]\) is \(e_{n,j}\). We claim that  $r_{n,j}\notin D_{n+1}F.$ If \(r_{n,j}\in D_{n+1}F\), then the image of \(r_{n,j}\) in \(R/R^p[R,F]\) would lie in $ \operatorname{Fil}_{n+1}(R/R^p[R,F]).$ Hence, \(e_{n,j}\in \operatorname{Fil}_{n+1}(R/R^p[R,F])\) whose image in $\operatorname{Fil}_n(R/R^p[R,F])/\operatorname{Fil}_{n+1}(R/R^p[R,F])$ is zero, a contradiction. Therefore $r_{n,j}\in D_nF\setminus D_{n+1}F,$ i.e., \(r_{n,j}\) has Zassenhaus degree \(n\).

It remains to show that the elements \(r_{n,j}\) form a minimal system of defining relations. Let \(N\) be the closed normal subgroup of \(F\) generated by all \(r_{n,j}\). Since \(r_{n,j}\in R\),  we have $ N\subseteq R.$ Moreover, since the images of the \(r_{n,j}\) form a basis of $    R/{R^p[R,F]},$  the following map \[H^1(R,\Fp)^G \longrightarrow H^1(N,\Fp)\]
is injective by the argument used in the proof of \cref{lem:relation mod dual}.
This implies \(N=R\) by \cite[Theorem 6.8]{Kochbook}. Hence, the elements \(r_{n,j}\)  form a minimal system of defining relations. For this adapted minimal system of relations, exactly \(s_n(G)\) relations have Zassenhaus degree \(n\). Hence, the Golod--Shafarevich polynomial associated with this adapted presentation is $ P_G(\tau)=1-d(G)\tau+\sum_{n\geq 2}s_n(G)\tau^n.$ This proves the proposition.
\end{proof}

\begin{remark}
   By \cref{prop:fildual}, one may choose a basis of \(H^2(G,\Fp)\) compatible with the induced filtration. More precisely, for each \(n\geq 2\), the images of a suitable subset of this basis form a basis of $\operatorname{gr}_n H^2(G,\Fp),$ and there is a natural isomorphism $\operatorname{gr}_n H^2(G,\Fp)
\cong
\operatorname{gr}_n\bigl(R/R^p[R,F]\bigr)^\vee.$ Moreover, by \cref{Defining relation}, after choosing an adapted minimal presentation of \(G\), a basis of $\operatorname{gr}_n\bigl(R/R^p[R,F]\bigr)$ corresponds to defining relations of Zassenhaus degree \(n\). Thus, the basis of \(H^2(G,\Fp)\) can be decomposed according to the Zassenhaus degrees of the corresponding defining relations.
\end{remark}

The following proposition will be useful in our estimates.

\begin{proposition}\label{prop:abelian-standard-relations}
Let $A=\bigoplus_{i=1}^d C_{p^{\alpha_i}}$, where $\alpha_i\ge1$. In the standard presentation
\[
A=\langle x_1,\ldots,x_d
\mid [x_i,x_j]\ (i<j),\
x_i^{p^{\alpha_i}}\ (1\le i\le d)\rangle,
\]
the defining relations form an adapted minimal system for the Zassenhaus filtration, with degrees $2$ and $p^{\alpha_i}$,
respectively. Consequently, for every $N\ge2$,
\[
\sum_{n=2}^N s_n(A)
=\binom d2+\#\{i:p^{\alpha_i}\le N\}.
\]
\end{proposition}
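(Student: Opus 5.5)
We first check that the standard presentation is a minimal presentation of $A$ and then compute the Zassenhaus degree of each relator, using \cref{Zassenhau filtration} and \cref{def:Zass-degree}.

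Let $F$ be free pro-$p$ on $x_1,\dots,x_d$. Let $R$ be the normal closure of the listed relators $[x_i,x_j]$ for $i<j$ and $x_i^{p^{\alpha_i}}$. Then $F/R\cong A$, and $d_p(A)=d$ because $A/A^p[A,A]\cong\Fp^d$, so the presentation is minimal. Next we count $H^2$. From the K\"unneth formula, or from the known value $\dim H^2(C_{p^\alpha},\Fp)=1$ together with the additivity of $H^1$ and the cup/Bockstein description, we get
\[
\dim_{\Fp}H^2(A,\Fp)=\binom d2+d .
\]
This is exactly the number of listed relators. The images of the relators span $R/R^p[R,F]$, because $R$ is their normal closure and $R/R^p[R,F]$ is generated by the images of any normal generating set. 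By \cref{lem:relation mod dual} this space has dimension $\binom d2+d$, so the images form a basis. Hence the system is minimal.

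Now we compute the degrees. In the Magnus embedding we have
\[
\iota([x_i,x_j])-1=u_iu_j-u_ju_i+(\text{higher terms}),
\]
which is nonzero of degree exactly $2$ for $i\ne j$. Also $\iota(x_i^{p^{\alpha}})-1=(1+u_i)^{p^\alpha}-1=u_i^{p^\alpha}$ in characteristic $p$, so $D(x_i^{p^{\alpha_i}})=p^{\alpha_i}$.

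The main obstacle is showing the system is \emph{adapted}. This means the images of the relators of degree $\ge n$ must span $\operatorname{Fil}_n(R/R^p[R,F])$, and the relators of each degree $n$ must be independent in $\operatorname{gr}_n$. Independence in $\operatorname{gr}_n$ says that no nontrivial $\Fp$-combination of relators of degree $n$ can be corrected by elements of higher filtration, or of $R^p[R,F]$, into $D_{n+1}F$. We handle this through the leading terms in the graded algebra $\operatorname{gr}\Fp[[F]]=\Fp\langle U\rangle$.

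Take a product $r=\prod r_k^{a_k}\cdot y$ with $y\in R^p[R,F]$. First, $y$ lies in a deep filtration level, since $D(y)\ge \min(2\cdot 2,\,p\cdot 2)$ and elements of $[R,F]$ have degree $\ge 1+\min D(r_k)$. More precisely, leading terms multiply, so the degree-$n$ leading form of $\prod r_k^{a_k}$ is
\[
\sum_{D(r_k)=n} a_k\,\mathrm{lead}(r_k).
\]
For $n=2$ these leading forms are the Lie elements $[u_i,u_j]$ (plus $u_i^2$ when $p=2$ and $\alpha_i=1$), and they are linearly independent in $\Fp\langle U\rangle_2$. For $n=p^{\alpha}$ they are $u_i^{p^\alpha}$ for distinct $i$, which are again independent. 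The leading terms of the correction $y$ must also be controlled. We argue that $\mathrm{lead}(y)$ lies in the two-sided ideal generated by the leading forms of the relators of strictly smaller degree, namely by the commutators $[u_i,u_j]$ and the lower powers $u_i^{p^\beta}$. The monomials $u_i^{p^\alpha}$ are not in the span of this ideal in degree $p^\alpha$ once they are combined with the commutator-free part; one sees this by passing to the commutative quotient $\Fp[u_1,\dots,u_d]$, where the commutators vanish. For the degree-$2$ relators no lower relators exist, and the bound $D(y)\ge 3$ suffices. Consequently $s_2(A)=\binom d2$ and $s_{p^\alpha}(A)=\#\{i:\alpha_i=\alpha\}$, where for $p=2$ the counts add at degree $2$.

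Summing over $n\le N$ then gives
\[
\sum_{n=2}^N s_n(A)=\binom d2+\#\{i:p^{\alpha_i}\le N\}.
\]
If the adaptedness step proves delicate, we first try the cleaner route of computing $\operatorname{Fil}_n$ directly. The idea is to identify $D_nF\cdot R/R$ with $D_nA$ via \cref{Zassenhau filtration}, which gives $D_nA=\prod_{p^j\ge n}A^{p^j}$ for abelian $A$. We then compare the resulting orders of the quotients $F/D_nF R$ with those predicted by the Hilbert series of $\Fp[u]/(u_i^{p^{\alpha_i}})$.
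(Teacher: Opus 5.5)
Your Magnus degree computations and the degree-$2$ case are fine; there $R^p[R,F]\subseteq D_3F$ makes the correction harmless. Your minimality argument via $\dim_{\Fp}H^2(A,\Fp)=\binom d2+d$ is also fine. It is a legitimate alternative to the paper, which instead deduces independence in $R/R^p[R,F]$ from graded independence. Combined with graded independence, your count gives the values of $s_n(A)$ by comparing dimensions.

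The gap is the independence of the power relators $x_i^{p^a}$ with $\alpha_i=a$ and $p^a>2$ in $\operatorname{gr}_{p^a}$, which is the heart of the statement. Here $R^p[R,F]$ contains elements such as $[r,f]$ with $r$ a commutator relator, of degree $3\le p^a$. So the correction $y$ need not lie in $D_{p^a+1}F$. Everything then rests on your assertion that the degree-$p^a$ component of $\iota(y)-1$ lies in the two-sided ideal generated by the leading forms of relators of smaller degree. You announce this (``we argue that'') but give no argument, and it does not follow from ``leading terms multiply''. When the expected leading form $[\mathrm{lead}(r),u]$ of $[r,f]$ cancels, higher-order terms of $r$ take over.

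For a general presentation the assertion is false. Take $p=3$ and the single relator $r=x_1^3[[[x_1,x_2],x_2],x_2]$. Then $\iota(r)-1=u_1^3+[u_1,u_2^3]+(\deg\ge5)$. So $y=[r,x_1]\in[R,F]$ has leading form $[[u_1,u_2^3],u_1]$ of degree $5$, which is not in the ideal generated by $u_1^3$, even though $r$ is the only relator and has degree $3<5$. Also, your ``lower powers'' must be $u_j^{p^{\alpha_j}}$ with $\alpha_j<a$, hence $j\ne i$; a lower power of $u_i$ itself would absorb $u_i^{p^a}$.

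The paper closes this step differently. It projects to $F^{\mathrm{ab}}\cong\mathbb Z_p^d$. There $[R,F]$ dies, $R^p$ maps onto $p\pi(R)=\bigoplus_j p^{\alpha_j+1}\mathbb Z_pv_j$, and $D_{p^a+1}F$ maps into $p^{a+1}\mathbb Z_p^d$. Hence $\prod_{\alpha_i=a}(x_i^{p^a})^{c_i}\in(R\cap D_{p^a+1}F)R^p[R,F]$ forces $c_ip^a\in p^{a+1}\mathbb Z_p$ in the $v_i$-coordinate.

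If you want to keep your graded approach, you must use what is special about this presentation:
\begin{itemize}
\item The elements $\iota([x_i,x_j])-1=x_i^{-1}x_j^{-1}(u_iu_j-u_ju_i)$ and $\iota(x_i^{p^{\alpha_i}})-1=u_i^{p^{\alpha_i}}$ are units times homogeneous forms, so $I_{\mathcal R}$ is generated by these forms.
\item Let $M_F$ be the augmentation ideal of $\Fp[[F]]$. For $y\in R^p[R,F]$ one has $\iota(y)-1\in\overline{I_{\mathcal R}M_F+M_FI_{\mathcal R}}$, because $r^p-1=(r-1)^p$ and $[r,f]-1=r^{-1}f^{-1}\bigl((r-1)(f-1)-(f-1)(r-1)\bigr)$.
\item In a term $a\ell b$ with $\ell$ one of these homogeneous generators and $a$ or $b$ in $M_F$, a generator of degree $\ge n$ contributes nothing in degree $n$. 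This is exactly your claim.
\end{itemize}
Your commutative-quotient argument then finishes the proof.

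Your fallback via the orders of $F/D_nF\,R=A/D_nA$ would not suffice. Those orders only record $R\cap D_nF$, not its position relative to $R^p[R,F]$, and the latter is what $\operatorname{Fil}_n(R/R^p[R,F])$ measures.
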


\begin{proof}
 Let \(F\) be the free pro-\(p\) group on \(x_1,\ldots,x_d\). The relation subgroup \(R\) is the closed normal subgroup of \(F\) generated by
\[
[x_i,x_j]\quad(i<j),
\qquad
x_i^{p^{\alpha_i}}\quad(1\le i\le d).
\]
Under the Magnus embedding \(\iota(x_i)=1+u_i\),
\[
\iota([x_i,x_j])-1
=
u_i u_j-u_j u_i+\text{terms of degree at least }3,
\]
and \(
\iota\!\left(x_i^{p^{\alpha_i}}\right)-1
=
u_i^{p^{\alpha_i}}.
\) Hence \(
D([x_i,x_j])=2\text{ and }
D\left(x_i^{p^{\alpha_i}}\right)=p^{\alpha_i}.
\) We claim that, for each \(n\ge2\), the standard relations of Zassenhaus degree \(n\) have linearly independent images in \(
\operatorname{gr}_n(R/R^p[R,F]).
\) For \(n=2\),  suppose that the images of the degree-two standard relations are linearly dependent in \(
\operatorname{gr}_2(R/R^p[R,F]).
\) Then there exist coefficients \(c_{ij}\in\Fp\) for \(i<j\) and, when $p=2,$ \(b_i\in\Fp\) for \(\alpha_i=1\), not all zero, such that
\[
\sum_{i<j}
c_{ij}\,\overline{[x_i,x_j]}
+
\mathbf 1_{\{p=2\}}
\sum_{\alpha_i=1}
b_i\,\overline{x_i^2}
=0
\]
in \(\operatorname{gr}_2(R/R^p[R,F])\), where the bars denote the corresponding classes in the second graded piece. Equivalently,
\[
\prod_{i<j}[x_i,x_j]^{c_{ij}}
\prod_{\alpha_i=1}(x_i^2)^{b_i}
\in
(R\cap D_3F)R^p[R,F],
\]
where the second product occurs only when \(p=2\). Since \(R\subseteq D_2F\), the Zassenhaus identities
\[
(D_mF)^p\subseteq D_{mp}F,
\qquad
[D_mF,D_\ell F]\subseteq D_{m+\ell}F
\]
give \(
R^p[R,F]\subseteq D_3F.
\)  Thus \(\prod_{i<j}[x_i,x_j]^{c_{ij}}
\prod_{\alpha_i=1}(x_i^2)^{b_i}\in D_3F\). Hence the homogeneous degree-two part of its Magnus expansion must vanish. On the other hand, by the above Magnus expansions, this
degree-two homogeneous part is precisely
\[
\sum_{i<j}c_{ij}(u_i u_j-u_j u_i)
+
\mathbf 1_{\{p=2\}}
\sum_{\alpha_i=1}b_i u_i^2,
\]
where \(\mathbf 1_{\{p=2\}}\) is \(1\) if \(p=2\) holds and \(0\) otherwise. Hence it is equal to \(0\).  Since the degree-two component of \(\Fp\langle u_1,\ldots,u_d\rangle\) has basis \(
\{\,u_i u_j:1\le i,j\le d\,\},
\) comparison of the coefficients of \(u_i u_j\) for \(i<j\) gives \(c_{ij}=0\), and, when \(p=2\), comparison of the coefficients of \(u_i^2\) gives \(b_i=0\). Hence all coefficients vanish. Thus the images of the degree-two standard relations are linearly independent in \(
\operatorname{gr}_2(R/R^p[R,F]).
\)

Now let \(n>2\). There is nothing to prove unless \(n=p^a\) for some
\(a\ge1\).  Suppose that the images of the degree $n$ standard relations are
linearly dependent in \(
\operatorname{gr}_n(R/R^p[R,F]).
\) Then  there exist coefficients \(
c_{i}\in\Fp \) , not all zero, such that 
\[
\prod_{\alpha_i=a}(x_i^{p^a})^{c_i}
\in
(R\cap D_{p^a+1}F)R^p[R,F].
\]
Let \(
\pi:F\to F^{\mathrm{ab}}\simeq \mathbb Z_p^d
\) be the abelianization map, and write \(
v_i=\pi(x_i)
\text{ for } 1\le i\le d.
\) Then
\[
\pi\!\left(
\prod_{\alpha_i=a}(x_i^{p^a})^{c_i}
\right)
=
\sum_{\alpha_i=a}c_i\,\pi(x_i^{p^a})
=
\sum_{\alpha_i=a}c_i p^a v_i.
\]
On the other hand, \(
\pi(R)
=
\bigoplus_{j=1}^{d} p^{\alpha_j}\mathbb Z_p v_j \text{ and }
\pi(D_{p^a+1}F)
\subseteq
D_{p^a+1}(\mathbb Z_p^d)
=
p^{a+1}\mathbb Z_p^d
\)  by \cref{Zassenhaus map} (i) and \cref{Zassenhau filtration}. Then
\[
\pi\!\left((R\cap D_{p^a+1}F)R^p[R,F]\right)
\subseteq
\pi(D_{p^a+1}F)+p\,\pi(R).
\]
Hence, for every \(i\) with \(\alpha_i=a\), comparison of the \(v_i\)-coordinate gives \(
c_i p^a\in p^{a+1}\mathbb Z_p.
\) Therefore \(c_i=0\) in \(\Fp\). Thus the degree-\(n\) standard relations are linearly independent in \(
\operatorname{gr}_n(R/R^p[R,F]).
\) Hence their images span \(R/R^p[R,F]\). Applying the same minimal-degree argument as in the proof of \cref{Defining relation}, their classes are linearly independent in \(R/R^p[R,F]\). Hence they form a basis of  the relation module adapted to the Zassenhaus filtration. Thus
\[
s_n(A)
=
\binom{d}{2}\mathbf 1_{\{n=2\}} +
\#\{\,i:p^{\alpha_i}=n\,\},
\]
where \(\mathbf 1_{\{n=2\}}\) is \(1\) if \(n=2\) holds and \(0\) otherwise. Consequently,
\[
\sum_{n=2}^{N}s_n(A)
=
\binom{d}{2}
+
\#\{\,i:p^{\alpha_i}\le N\,\}.
\]
\end{proof}

\subsection{Zassenhaus filtration on restriction map}
In this subsection, we assume that the finitely generated pro-$p$ groups $H$ and $G$ admit the minimal pro-$p$ presentation of the form,
\[
1\longrightarrow R_H\longrightarrow F_H\xrightarrow{\pi_H}H\longrightarrow 1
\]
and
\[
1\longrightarrow R_G\longrightarrow F_G\xrightarrow{\pi_G}G\longrightarrow 1.
\]
The goal of this section is to prove that the restriction map
\[
\operatorname{res}_H^G:H^2(G,\mathbb F_p)\longrightarrow H^2(H,\mathbb F_p)
\]
is filtered with respect to the filtration induced by the Zassenhaus filtration on the relation modules.

\begin{proposition}
\label{Zassenhaus map}
Let $\iota:H\longrightarrow G$ be a continuous homomorphism of pro-\(p\) groups.
\begin{enumerate}
\item For every \(n\ge1\), we have $\iota(D_nH)\subseteq D_nG.$
\item There exists a continuous homomorphism $\widetilde\iota:F_H\longrightarrow F_G$
such that \[\pi_G\circ\widetilde\iota=\iota\circ\pi_H.\]
Moreover, we have $\widetilde\iota(R_H)\subseteq R_G.$
\item The continuous homomorphism $\widetilde\iota:F_H\longrightarrow F_G$ induces a well-defined continuous homomorphism
\[\overline\iota:R_H/R_H^p[R_H,F_H]\longrightarrow R_G/R_G^p[R_G,F_G].\]
Moreover, for every \(n\ge1\), one has
\[\overline\iota(\operatorname{Fil}_n(R_H/R_H^p[R_H,F_H]))\subseteq \operatorname{Fil}_n(R_G/R_G^p[R_G,F_G]).\]
\item If $H$ is the closed subgroup of $G$, then the usual cohomological restriction map
\[
\operatorname{res}_H^G:H^2(G,\mathbb F_p)\to H^2(H,\mathbb F_p)
\]
satisfied that $\phi_H(\operatorname{res}_H^G\alpha)(m)=\phi_G(\alpha)(\overline\iota(m))$ for $m\in R_H/R_H^p[R_H,F_H]$ and $\alpha\in H^2(G,\mathbb F_p)$. Moreover,  for every \(N\ge1\), one has
\[
\operatorname{res}_H^G
\left(
\operatorname{Fil}_N H^2(G,\mathbb F_p)
\right)
\subseteq
\operatorname{Fil}_N H^2(H,\mathbb F_p).
\]
\end{enumerate}
\end{proposition}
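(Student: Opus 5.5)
We prove the four parts in order; each one feeds the next.

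For (1), the plan is to use functoriality of completed group algebras. The continuous homomorphism $\iota$ induces a continuous $\Fp$-algebra homomorphism $\Fp[[H]]\to\Fp[[G]]$, since it is compatible with the inverse systems: for $N\in\Omega_p(G)$ the preimage $\iota^{-1}(N)$ is open normal in $H$. This algebra map sends $h-1$ to $\iota(h)-1$. Hence it sends the augmentation ideal $M_H$ into $M_G$, and therefore $M_H^n$ into $M_G^n$. We need closures here, since $M^n$ is understood as a closed ideal in the profinite setting, and continuity handles this. If $h-1\in M_H^n$, then $\iota(h)-1\in M_G^n$, so $\iota(h)\in D_nG$. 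Alternatively one can use \cref{Zassenhau filtration}: $\iota$ maps $\gamma_iH$ into $\gamma_iG$ and $p^j$-th powers to $p^j$-th powers, and closed products are preserved by continuity.

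For (2), use the universal property of the free pro-$p$ group $F_H$. Choose a basis $x_1,\dots,x_d$ of $F_H$ and choose preimages $y_i\in F_G$ of $\iota(\pi_H(x_i))$ under the surjection $\pi_G$. Then define $\widetilde\iota(x_i)=y_i$. The two maps $\pi_G\circ\widetilde\iota$ and $\iota\circ\pi_H$ agree on generators, so they agree on $F_H$. For $r\in R_H$ we get $\pi_G(\widetilde\iota(r))=\iota(1)=1$, so $\widetilde\iota(R_H)\subseteq R_G$.

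For (3), well-definedness follows from $\widetilde\iota(R_H^p[R_H,F_H])\subseteq R_G^p[R_G,\widetilde\iota(F_H)]\subseteq R_G^p[R_G,F_G]$. Filtration compatibility follows from (1) applied to $\widetilde\iota$: it maps $R_H\cap D_nF_H$ into $R_G\cap D_nF_G$, and we then pass to images.

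For (4), this is the main step: identifying restriction with the dual of $\overline\iota$. The plan is to make the isomorphism of \cref{lem:relation mod dual} explicit as the transgression. Write $\phi_G=(\tg_G)^{-1}$, where $\tg_G:H^1(R_G,\Fp)^{F_G}\to H^2(G,\Fp)$. Then apply naturality of the Hochschild--Serre spectral sequence, and hence of transgression, with respect to the morphism of extensions $(\widetilde\iota,\iota)$ from $1\to R_H\to F_H\to H\to1$ to $1\to R_G\to F_G\to G\to1$. This naturality gives
\[
\res^G_H\circ\tg_G=\tg_H\circ\widetilde\iota^*,
\]
where $\widetilde\iota^*$ is pullback of homomorphisms $\lambda\mapsto\lambda\circ\widetilde\iota|_{R_H}$. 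Here $\res^G_H$ is $\iota^*$ for the inclusion $\iota$. Unwinding gives $\phi_H(\res\alpha)(m)=\phi_G(\alpha)(\overline\iota(m))$. A technical point is that minimality matters only for the isomorphisms, not for naturality, so no extra hypothesis is needed.

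Finally, take $\alpha\in\operatorname{Fil}_NH^2(G,\Fp)$, so that $\phi_G(\alpha)$ kills $\operatorname{Fil}_{N+1}$. For $m\in\operatorname{Fil}_{N+1}(R_H/R_H^p[R_H,F_H])$, part (3) gives $\overline\iota(m)\in\operatorname{Fil}_{N+1}(R_G/R_G^p[R_G,F_G])$. Hence $\phi_H(\res\alpha)(m)=0$, that is, $\res\alpha\in\operatorname{Fil}_NH^2(H,\Fp)$.
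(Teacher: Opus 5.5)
Your proposal is correct and follows the paper's proof essentially step for step. You lift $\iota$ to $\widetilde\iota$ via the universal (projective) property of free pro-$p$ groups, check $\widetilde\iota(R_H^p[R_H,F_H])\subseteq R_G^p[R_G,F_G]$ and compatibility with $\operatorname{Fil}_n$ using part (1), and identify $\res^G_H$ with the dual of $\overline\iota$ through naturality of transgression. The only minor difference is in (1): your main argument works directly from the augmentation-ideal definition of $D_n$ via the induced map $\Fp[[H]]\to\Fp[[G]]$, whereas the paper inducts on the lower central series and uses $D_nG=\prod_{ip^j\ge n}(\gamma_iG)^{p^j}$, which you also mention as an alternative.
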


\begin{proof}
\begin{enumerate}
    \item For \(i=1\), this is clear since $\gamma_1H=H,\gamma_1G=G$ and $\iota(H)\subseteq G.$ Assume inductively that $\iota(\gamma_iH)\subseteq \gamma_iG.$ Since $\gamma_{i+1}H=[\gamma_iH,H]$, we have
\[\iota(\gamma_{i+1}H)=\iota([\gamma_iH,H])\subseteq [\iota(\gamma_iH),\iota(H)]\subseteq [\gamma_iG,G]
=\gamma_{i+1}G.\]
Hence, we have $\iota(\gamma_iH)\subseteq\gamma_iG$ for all \(i\ge1\). It follows that $\iota\bigl((\gamma_iH)^{p^j}\bigr)
\subseteq (\gamma_iG)^{p^j}$ for every \(j\ge0\). By \cref{Zassenhau filtration}, we obtain
\[\iota(D_nH)=\iota\left(
\prod_{ip^j\ge n}(\gamma_iH)^{p^j}
\right)
\subseteq
\prod_{ip^j\ge n}(\gamma_iG)^{p^j}=D_nG.\]

\item Since the free pro-p group is the projective object of pro-p group category \cite[Theorem 4.8]{Kochbook}. Thus, the first claim is true. For any $r\in R_H$, i.e., $\pi_H(r)=1,$  we get
\[\pi_G(\widetilde\iota(r))
=(\pi_G\circ\widetilde\iota)(r)
=(\iota\circ\pi_H)(r)=\iota(\pi_H(r))=\iota(1)=1\]
by using the commutative diagram.
Thus, we have $\widetilde\iota(r)\in\ker(\pi_G)=R_G,$ i.e., $\widetilde\iota(R_H)\subseteq R_G.$

\item One can verify  $\widetilde\iota(R_H^p)\subseteq R_G^{p}$ and $\widetilde\iota([R_H,F_H])\subseteq [R_G,F_G].$ Thus, it can induce  a well-defined continuous homomorphism
\[\overline\iota:R_H/R_H^p[R_H,F_H]\longrightarrow R_G/R_G^p[R_G,F_G].\]
Moreover, by the definition of $ \operatorname{Fil}_n(R_H/R_H^p[R_H,F_H])$ and $\widetilde\iota(D_nF_H)\subseteq D_n F_G$ from item (i), one can show that \[
\overline\iota(\operatorname{Fil}_n(R_H/R_H^p[R_H,F_H]))\subseteq \operatorname{Fil}_n(R_G/R_G^p[R_G,F_G]).\]

\item  Let $\alpha\in H^2(G,\F_p)$ and $\varphi=\phi_G(\alpha)\in  (R_G/R_G^p[R_G,F_G])^\vee.$ We have the following map $$\overline\iota^\vee: (R_G/R_G^p[R_G,F_G])^\vee \longrightarrow (R_H/R_H^p[R_H,F_H])^\vee $$
by $\overline\iota^\vee(\varphi)(\bar r)=\varphi(\overline\iota(\bar r))$ for $\bar r\in R_H/R_H^p[R_H,F_H]$. By the naturality of the Hochschild--Serre five-term exact sequence applied to the morphism of presentations induced by $\widetilde\iota$, the transgression maps satisfy
\[
\res_H^G\circ\tg_G
=
\tg_H\circ\overline\iota^\vee,
\]
where \(
\tg_G:
(R_G/R_G^p[R_G,F_G])^\vee
\xrightarrow{\sim} H^2(G,\F_p)
\) is the transgression isomorphism arising from \cref{lem:relation mod dual}, and $\tg_H$ is defined analogously. Applying $\phi_H=\tg_H^{-1}$ and $\phi_G=\tg_G^{-1}$  gives
\[
\phi_G(\alpha)\circ\overline\iota=\overline\iota^\vee\varphi=\phi_H(\res_H^G(\tg_G(\varphi)))=\phi_H(\res_H^G(\alpha)).
\]
 For any $\alpha\in \operatorname{Fil}_N H^2(G,\mathbb F_p)$ and $m\in \operatorname{Fil}_{N+1}(R_H/R_H^p[R_H,F_H])$, we have
$$\phi_H(\operatorname{res}_H^G\alpha)(m)=\phi_G(\alpha)(\overline\iota(m))=0.$$
It follows that  $ \operatorname{res}_H^G\alpha\in  \operatorname{Fil}_N H^2(H,\mathbb F_p)$. Therefore, we have
\[
\operatorname{res}_H^G
\left(
\operatorname{Fil}_N H^2(G,\mathbb F_p)
\right)
\subseteq
\operatorname{Fil}_N H^2(H,\mathbb F_p).
\]
 
\end{enumerate}
\end{proof}

\begin{remark}
Taking $H=G$ and $\iota=\mathrm{id}_G$ in \cref{Zassenhaus map}, and applying the result in both directions to two minimal pro-$p$ presentations, shows that the filtration on $H^2(G,\F_p)$, and hence $s_n(G)$, is independent of the chosen presentation.
\end{remark}

\section{Lower bound for the generator rank}\label{sec:3}
In \cite[Section 3]{Iharacompositio}, Duursma and Mak summarized the result on the lower bound for the generator rank of the Galois group \(G_{S,\nu}\) of the maximal pro-\(p\) extension \(K_{S,\nu}\) of \(K\) which is unramified outside \(S\) and has ramification depth at most \(\nu_{\mathfrak p}\) at \(\mathfrak p\in S\) from \cite{HMbounded, Kuhnt}.
However, we calculate the generator rank of  the Galois group \(G_{S,\nu}^{T}\) of the maximal pro-\(p\) extension \(K_{S,\nu}^{T}\) of \(K\) which is unramified outside \(S\), has ramification depth at most \(\nu_{\mathfrak p}\) at \(\mathfrak p\in S\), and in which every prime of  \(T\) splits completely in this section.

\subsection{Ramification of bounded depth}
Let $K/\F_q$ be a global function field of characteristic $p$.  Let $S$ be a set of places of $K$ and let
\[ \nu:S\longrightarrow [0,\infty]\]
be a depth function, extended by $\nu_\gp=0$ for any place $p$ outside $S$.

\begin{definition}
Let $L/K$ be a Galois extension.  We say that $L/K$ has ramification of depth at most $n$ at $\gp$ if the upper ramification group $G^n_\gP$ is trivial for every place $\gP$ of $L$ lying above $\gp$.  Moreover, $L/K$ has ramification bounded by $\nu$ if the ramification of depth at each $\gp\in S$ is at most $\nu_\gp$.
\end{definition}

Let $K_{S,\nu}$ be the maximal $p$-extension of $K$ unramified outside $S$ and with ramification depth bounded by $\nu$, and set $G_{S,\nu}=\Gal(K_{S,\nu}/K).$ Moreover, if $T$ is a set of places of $K$ and let $K_{S,\nu}^T$ be the fixed field corresponding to the closed normal subgroup of $G_{S,\nu}$ generated by all decomposition group of $G_{S,\nu}$ at the places of $T$, then $K_{S,\nu}^T$ is the maximal $p$-extension of $K$ unramified outside $S$ with ramification depth bounded by $\nu$ and all the places in $T$ split completely.

If $L/K$ is a finite subextension and $\gP$ lies above $\gp$, then the induced depth on $L$ is obtained from the Herbrand transition
\[\nu_{L,\gP}=\psi_{\gP/\gp}(\nu_\gp),\]
where $\psi$ is given by $G_{\mathfrak{P}}^v=(G_{\mathfrak{P}})_{\psi_{\gP/\gp}(v)}$ relating the upper and lower ramification groups.

The extension $K_{S,\nu}/K$ can be obtained by repeatedly taking maximal abelian layers subject to the same ramification-depth condition by \cite[Proposition 3.1]{Iharacompositio}.  Moreover $K_{S,\nu}^T/K$  has the similar property, since the decomposition group can be viewed as the local Galois group and we can use the similar proof  from \cite[Theorem 3.5]{HMbounded}. This reduces the calculation of the generator rank to class field theory.

\subsection{Idelic calculation of the generator rank}
For a place $\gp$ of $K$, let $U_\gp$ be the unit group of local field $K_\gp$. For $n\geq 1$, define the $n$-th unit group of $K_\gp$ by
$U^{(n)}_\gp=\{x\in K_\gp: v_\gp(x-1)\ge n\}.$ Let \(S\) and \(T\) be two disjoint finite sets of places of \(K\).
Now let us fix the following notations
\[
\Delta
=
\left\{
x\in K^*
\;\middle|\;
\operatorname{div}(x)\in p\,\operatorname{Div}(K)
\right\}/K^{*p},
\]
\[
\Delta_{\emptyset}^{T}=\left\{
x\in K^*\ \middle|\
x\in K_{\mathfrak q}^{*p}U_{\mathfrak q}^{}
\text{ for all } \mathfrak q\notin T
\right\}/K^{*p}.
\]
If $T=\emptyset$, we write $\Delta_{\emptyset}$ for $\Delta_{\emptyset}^{T}.$  Then, we have $\Delta_{\emptyset}=\Delta$ by definition. Moreover, we define
\[
\Delta_{S}^{T}=\left\{x\in K^*\ \middle|\
\begin{array}{l}
x\in K_{\mathfrak p}^{*p}
\quad \text{for all } \mathfrak p\in S,
x\in K_{\mathfrak q}^{*p}U_{\mathfrak q}^{}
\quad \text{for all } \mathfrak q\notin  S\cup T
\end{array}
\right\}
/K^{*p},\]
\[
\Delta_{S,\nu}^{T}=\left\{x\in K^*\ \middle|\
\begin{array}{l}
x\in K_{\mathfrak p}^{*p}
U_{\mathfrak p}^{(\nu_{\mathfrak p})}
\quad \text{for all } \mathfrak p\in S,
x\in K_{\mathfrak q}^{*p}U_{\mathfrak q}^{}
\quad \text{for all } \mathfrak q\notin S\cup T
\end{array}
\right\}
/K^{*p}.
\]

\begin{theorem}
\label{thm:generator rank}
Let \(K\) be a global function field over \(\mathbb F_q\), where \(q=p^e\). Let \(S\) and \(T\) be two disjoint finite sets of places of \(K\). For each \(\mathfrak p\in S\), let \(\nu_{\mathfrak p}\in \Z_{\geq1}\) and $\deg \gp=f_\gp$. Let  \(G_{S,\nu}^{T}\) be the Galois group of the maximal pro-\(p\) extension of \(K\) which is unramified outside \(S\), has ramification depth at most \(\nu_{\mathfrak p}\) at \(\mathfrak p\in S\), and in which every prime of  \(T\) splits completely. Then the generator rank of $G_{S,\nu}^{T}$ is given by
\[
d_p(G_{S,\nu}^{T})
=
1
-|T|
+d_p(\Delta_{S,\nu}^{T})
+ \sum_{\mathfrak p\in S}
e f_\gp\bigl(\nu_\gp-1-\floor{(\nu_\gp-1)/p}\bigr).
\]
\end{theorem}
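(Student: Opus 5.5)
Since $d_p(G_{S,\nu}^T)=\dim_{\F_p} \Hom(G_{S,\nu}^T,\F_p)$, the generator rank equals the $\F_p$-dimension of the Galois group of the maximal elementary abelian $p$-extension of $K$ that is unramified outside $S$, has depth bounded by $\nu$, and in which $T$ splits completely. By class field theory this group is
\[
C/C^p,\qquad C=\mathbb{I}_K\Big/\Big(K^*\prod_{\mathfrak p\in S}U_{\mathfrak p}^{(\nu_{\mathfrak p})}\prod_{\mathfrak q\notin S\cup T}U_{\mathfrak q}\prod_{\mathfrak q\in T}K_{\mathfrak q}^*\Big).
\]
Here the upper numbering depth condition translates to the conductor condition $U^{(\nu_\mathfrak p)}_\mathfrak p$ in the norm group, which is the standard upper-numbering/unit-filtration correspondence. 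I will therefore compute $\dim_{\F_p}$ of this quotient by idelic bookkeeping. The reduction from the whole pro-$p$ group to its maximal abelian layer is exactly the remark after \cite[Proposition 3.1]{Iharacompositio} in the paper.

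The exact sequence I plan to use is
\[
1\to \frac{\prod_{S}U_\mathfrak p/U_\mathfrak p^{(\nu_\mathfrak p)}\times \prod_{\mathfrak q\notin S\cup T}U_\mathfrak q \times\prod_T K_\mathfrak q^*}{\text{image of global units}} \to \mathbb I_K/K^*\to \mathrm{Pic}\text{-type quotient}\to 1 .
\]
Concretely, I will write $C$ as an extension of the $T$-class group $\mathrm{Pic}(K)/\langle T\rangle$ by the local unit quotient $\prod_{\mathfrak p\in S}U_\mathfrak p^{(1)}/U_\mathfrak p^{(\nu_\mathfrak p)}$, modulo the diagonal image of $T$-units. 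The prime-to-$p$ parts $U_\mathfrak p/U_\mathfrak p^{(1)}$ and $\F_q^*$ disappear after tensoring with $\F_p$. Tensoring with $\F_p$ (right exact), and tracking the kernel via the snake lemma applied to multiplication by $p$, the dimension of $C/C^p$ becomes a sum of four contributions:
\begin{itemize}
\item $\dim (\text{local units})\otimes\F_p$;
\item plus $\dim \mathrm{Pic}^0$-part$\otimes \F_p$;
\item plus $1$ from the degree map, since $\mathrm{Pic}\to\Z$;
\item minus $|T|$ for the $T$-rational places, which impose splitting.
\end{itemize}
The $p$-torsion and the global-unit corrections assemble into the Kummer-type term $d_p(\Delta_{S,\nu}^T)$. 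This is the usual Shafarevich/Koch-type formula $d_p=\chi+\dim B$, where $B$ is the dual Selmer-type group. More precisely, I will use the Poitou–Tate/Koch identity: the cokernel of $\Hom(G_{S,\nu}^T,\F_p)\to\bigoplus_{S} \Hom(U_\mathfrak p^{(1)}/U_\mathfrak p^{(\nu_\mathfrak p)},\F_p)$ is governed by $\Delta_{S,\nu}^T$, where $x\in K^*$ is locally a $p$th power times $U^{(\nu)}$ at $S$ and a $p$th power times a unit elsewhere, with no condition at $T$. This matches the definition of $\Delta^T_{S,\nu}$ given above.

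The local computation is then
\[
\dim_{\F_p} U_\mathfrak p^{(1)}/\bigl(U_\mathfrak p^{(1)}\bigr)^pU_\mathfrak p^{(\nu_\mathfrak p)}=ef_\mathfrak p\bigl(\nu_\mathfrak p-1-\lfloor(\nu_\mathfrak p-1)/p\rfloor\bigr).
\]
To see this, note that $K_\mathfrak p\cong\F_{q^{f_\mathfrak p}}((\pi))$ and $U^{(1)}$ is a product of $\Z_p$-copies indexed by $i\ge1$ prime to $p$. Each such copy has rank $[\F_{q^{f_\mathfrak p}}:\F_p]=ef_\mathfrak p$. Modulo $U^{(\nu)}$, the indices $1\le i\le\nu-1$ with $p\nmid i$ survive, and there are $\nu-1-\lfloor(\nu-1)/p\rfloor$ of them. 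The constant $1$ comes from the $\Z_p$ quotient of degree (the constant field extension), which is unramified and inert or split according to $T$. The $-|T|$ comes from imposing that each rational place of $T$ split: each contributes one independent Frobenius relation. The deficit in independence is absorbed into $\Delta_{S,\nu}^T$, because the $T$-places drop out of that definition.

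The main obstacle is this last precise accounting. I must show that the global obstruction, namely the kernel from global $p$-units and $\mathrm{Pic}^0[p]$ together with the failure of the $T$-Frobenii to be independent, is measured exactly by $d_p(\Delta_{S,\nu}^T)$ with no extra correction. I would handle it by the dual approach. Using Kummer/Artin–Schreier-free duality via the global reciprocity pairing, I will identify the dual of $C/C^p$ with the kernel of the map $\bigoplus_{\mathfrak p\in S}\text{local}\oplus \bigoplus_T\F_p\to(\text{Selmer group})^\vee$. I will then follow \cite[Theorem 3.5]{HMbounded} and \cite[Section 3]{Iharacompositio} verbatim, inserting the $T$-conditions.
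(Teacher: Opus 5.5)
Your reduction to $C/C^p=J_K/K^*U^T_{S,\nu}J_K^p$ is fine. So is the sequence $0\to A\to C\to B\to 0$ with $A=\bigl(\prod_{\gp\in S}U_\gp/U_\gp^{(\nu_\gp)}\bigr)/\im(\mathcal O_T^*)$, where $\mathcal O_T^*$ denotes the functions with divisor supported on $T$, and $B=\operatorname{Pic}_K/\langle T\rangle$. The local count $ef_\gp(\nu_\gp-1-\floor{(\nu_\gp-1)/p})$ is also correct. But the argument stops exactly where the content of the theorem lies. After the snake lemma you get $\dim C/C^p=\sum_{\gp\in S}ef_\gp(\cdots)+\dim B/pB-\dim\im(\mathcal O_T^*)-\dim\im\delta$. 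Here $\delta\colon B[p]\to A/pA$ is the connecting map, and $\im(\mathcal O_T^*)$ is the image of the $T$-units in $\prod_{\gp\in S}U_\gp/U_\gp^pU_\gp^{(\nu_\gp)}$. So what must be proved is
\[
\dim B/pB-\dim\im(\mathcal O_T^*)-\dim\im\delta=1-|T|+d_p(\Delta_{S,\nu}^T).
\]
You only assert this (``the corrections assemble into $d_p(\Delta^T_{S,\nu})$'') and defer it to \cite{HMbounded} and \cite{Iharacompositio} ``inserting the $T$-conditions''. Inserting the $T$-conditions is precisely what this theorem adds to those sources.

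Your heuristics are also not right as stated:
\begin{enumerate}
\item The snake-lemma corrections enter with a minus sign. The $d_p(\operatorname{Cl}_K)$ you list as a positive contribution must be cancelled, not supplemented: for $S=T=\emptyset$ the answer is $1+d_p(\operatorname{Cl}_K)=1+d_p(\Delta_\emptyset)$.
\item The $T$-places need not impose independent conditions. Their dependency is $\ker\bigl(\bigoplus_{\mathfrak t\in T}\F_p\mathfrak t\to\operatorname{Pic}_K/p\bigr)\cong\Delta^T_\emptyset/\Delta_\emptyset$.
\item The cokernel of $\Hom(G^T_{S,\nu},\F_p)\to\bigoplus_{\gp\in S}\Hom(U_\gp^{(1)}/U_\gp^{(\nu_\gp)},\F_p)$ is dual to $\Delta^T_\emptyset/\Delta^T_{S,\nu}$, not to $\Delta^T_{S,\nu}$.
\end{enumerate}

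The missing idea is the map $\Delta^T_\emptyset\to\prod_{\gp\in S}U_\gp/U_\gp^pU_\gp^{(\nu_\gp)}$, $x=y_\gp^pu_\gp\mapsto(u_\gp)_\gp$. It is well defined because $x\in K_\gp^{*p}U_\gp$ for $\gp\notin T$. By definition its kernel is $\Delta^T_{S,\nu}$. Its image is exactly the kernel of the local-units map into $J_K/K^*U^T_{S,\nu}J_K^p$: if $u=xwz^p$ with $x\in K^*$, $w\in U^T_{S,\nu}$, $z\in J_K$, then $x\in\Delta^T_\emptyset$ and $x\mapsto u$. This gives the paper's exact sequence of $\F_p$-spaces
$1\to\Delta^T_{S,\nu}\to\Delta^T_\emptyset\to\prod_{\gp\in S}U_\gp/U_\gp^pU_\gp^{(\nu_\gp)}\to J_K/K^*U^T_{S,\nu}J_K^p\to J_K/K^*U^T_\emptyset J_K^p\to 1$.
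The last term comes from $0\to\Delta_\emptyset\to\Delta^T_\emptyset\to\bigoplus_{\mathfrak t\in T}\F_p\mathfrak t\to\operatorname{Pic}_K/p\to J_K/K^*U^T_\emptyset J_K^p\to 0$. Use $\Delta_\emptyset\cong\operatorname{Cl}_K[p]$ (as $p\nmid q-1$) and $\dim\operatorname{Pic}_K/p=d_p(\operatorname{Cl}_K)+1$; this gives $1+d_p(\Delta^T_\emptyset)-|T|$. The two occurrences of $d_p(\Delta^T_\emptyset)$ cancel in the alternating sums, and no duality is needed.

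In your own framework the same content is the isomorphism $\Delta^T_\emptyset/\im(\mathcal O_T^*)\cong(\operatorname{Pic}_K/\langle T\rangle)[p]$, given by $x\mapsto[D]$ where $\operatorname{div}(x)=pD+\sum_{\mathfrak t\in T}a_{\mathfrak t}\mathfrak t$. Under it, $\delta$ becomes (up to sign) the map above. This yields $\dim\im(\mathcal O_T^*)+\dim\im\delta=d_p(\Delta^T_\emptyset)-d_p(\Delta^T_{S,\nu})$ and $\dim B/pB=1-|T|+d_p(\Delta^T_\emptyset)$, which together give the displayed identity.
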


\begin{proof}
Let $J_K$ be the group of ideles of $K$. Let
\[
U_{\emptyset}^{T}
=
\prod_{\mathfrak t\in T}K_{\mathfrak t}^{*}
\prod_{\mathfrak q\notin T}U_{\mathfrak q}^{}\quad \text{ and  } \quad
U_{S,\nu}^{T}
=
\prod_{\mathfrak p\in S}U_{\mathfrak p}^{(\nu_{\mathfrak p})}
\prod_{\mathfrak t\in T}K_{\mathfrak t}^{*}
\prod_{\mathfrak q\notin S\cup T}U_{\mathfrak q}^{}.
\]
The class field theory \cite[Proposition 2.4.4 ]{HXrationalbook} gives the following exact sequence
\[
1
\longrightarrow
\Delta_{S,\nu}^{T}
\longrightarrow
\Delta_{\emptyset}^T
\longrightarrow
\frac{U_{\emptyset}^T}{(U_{\emptyset}^T)^pU_{S,\nu}^{T}}
\longrightarrow
\frac{J_K}
{K^{*}U_{S,\nu}^{T}J_K^p}
\longrightarrow
\frac{J_K}
{K^{*}U_{\emptyset}^T J_K^p}
\longrightarrow
1.
\]
Hence, after taking \(p\)-ranks, we obtain
\[d_p\left(\frac{J_K}
{K^{*}U_{S,\nu}^{T}J_K^p}\right)
=d_p\left(\frac{J_K}
{K^{*}U_{\emptyset}^T J_K^p}\right)
+d_p\left(
\frac{U_{\emptyset}^T}{(U_{\emptyset}^T)^pU_{S,\nu}^{T}}
\right)
-d_p(\Delta_{\emptyset}^T)
+d_p(\Delta_{S,\nu}^{T}).\]
Since $U_{\mathfrak p}/U_{\mathfrak p}^{(1)}$ has order prime to \(p\), we have
\[
d_p\left(
\frac{U_{\emptyset}^T}{(U_{\emptyset}^T)^pU_{S,\nu}^{T}}
\right)
=\sum_{\mathfrak p\in S}d_p
\frac{U_{\mathfrak p}}
{(U_{\mathfrak p})^p
U_{\mathfrak p}^{(\nu_{\mathfrak p})}}
=
\sum_{\mathfrak p\in S}
d_p\left(
\frac{U_{\mathfrak p}^{(1)}}
{U_{\mathfrak p}^{(\nu_{\mathfrak p})}}
\right).
\]
It remains to compute \(d_p(\frac{J_K}
{K^{*}U_{\emptyset}^T J_K^p})\). There is an exact sequence
\[
0
\longrightarrow
\Delta_{\emptyset}
\longrightarrow
\Delta_{\emptyset}^T
\longrightarrow
\bigoplus_{\mathfrak t\in T}\mathbb F_p\mathfrak t
\longrightarrow
\operatorname{Pic}_K/p\operatorname{Pic}_K
\longrightarrow
\frac{J_K}
{K^{*}U_{\emptyset}^T J_K^p}
\longrightarrow
0.
\]
All homomorphisms are obtained in a canonical and obvious manner, moreover you can find the above exact sequence in \cite[Proposition 2.3]{Mild} or \cite[Lemma 2.2]{maire2025ssplitphilbertclassfield}. Let $\operatorname{Cl}_K$ be the degree-zero divisor class group of $K$. Since $d_p(\F_q^*)=0$ and \cite[Theorem 2.4.1]{HXrationalbook}, we have $d_p(\Delta_{\emptyset})=d_p(\operatorname{Cl}_K)=d_p(\operatorname{Pic}_K)-1$ and
\[
d_p\left(\frac{J_K}
{K^{*}U_{\emptyset}^T J_K^p}\right)
=
1+d_p(\Delta_{\emptyset}^T)-|T|.
\]
Let $G_{S,\nu}^{T,abel}$ denote the maximal abelian quotient of \(G_{S,\nu}^{T}\). Substituting this into the previous formula yields
\[d_p(G^T_{S,\nu})=d_p(G_{S,\nu}^{T,abel})
=d_p\left(\frac{J_K}
{K^{*}U_{S,\nu}^{T}J_K^p}\right)=1+d_p(\Delta_{S,\nu}^{T})
+\sum_{\mathfrak p\in S}d_p\left(
\frac{U_{\mathfrak p}^{(1)}}
{U_{\mathfrak p}^{(\nu_{\mathfrak p})}}\right)-|T|.\]
By \cite[Lemma 4.2.5]{HXrationalbook}, we have
\[
d_p\left(U^{(1)}_\gp/U^{(\nu_\gp)}_\gp\right)
=
e f_\gp\left(\nu_\gp-1-\floor{\frac{\nu_\gp-1}{p}}\right).
\]
This completes the proof.
\end{proof}

\section{Zassenhaus Relation Defect Relative to the Generator Rank }
\label{sec:relation-detailed}

Throughout this section $L/K/k$ is a tower of Galois $p$-extensions of global function fields over a finite field.  The extension $K/k$ is finite and ramified only above a finite set $S$ of places of $k$, while $L/K$ is unramified.  If $\gp\in S$, we write $G_\gp$ for a local Galois group of $G=\Gal(L/k)$ at a chosen place of $L$ above $\gp$, and $I_\gp$ for its inertia subgroup.  The notations
\[
   d_p(G)=\dim_{\F_p}H^1(G,\Z/p\Z),
   \quad \text{ and } \quad
   r_p(G)=\dim_{\F_p}H^2(G,\Z/p\Z)
\]
stands for the generator rank and relation rank of finite $p$-groups or finitely generated pro-$p$ groups $G$ respectively in the usual continuous cohomology sense.
This section expands the cohomological estimate used in the class field tower argument. The aim is to bound the difference
\[
\sum_{n=2}^{N}s_n(G)-d_p(G),
\]
where $\sum_{n=2}^{N}s_n(G)$ is the $\F_p$-dimension of $\operatorname{Fil}_N H^2(G,\Z/p\Z)$ defined in Section \ref{sec:2}. For finite $G$, this sum equals $r_p(G)$ for all sufficiently large $N$.
We first introduce some result in the unramified cohomology which can seen in \cite[Section 4.3]{Iharacompositio}. Define
\begin{equation}\label{eq:unramified-cohomology}
   H^i_{\mathrm{nr}}(G_\gp,\Z/p\Z)
   =\operatorname{im}\left(
      H^i(G_\gp/I_\gp,\Z/p\Z)
      \xrightarrow{\infmap}
      H^i(G_\gp,\Z/p\Z)
   \right).
\end{equation}
This is the part of local cohomology that comes from the unramified quotient. Now assume that $L/k$ is a $p$-extension ramified only inside $S$.  Define
\[
   S_{\mathrm{nr}}
   =\left\{
      \gp\in S:
      d_p(I_\gp G'_\gp/G'_\gp)=d_p(G_\gp)-1
     \right\},
\]
where $G'_\gp$ is the commutator subgroup of $G_\gp$. Equivalently, if $L_\gp^{\mathrm{ab}}/k_\gp$ denotes the maximal abelian subextension of the local extension, then $\gp\in S_{\mathrm{nr}}$ exactly when
\[
   d_p(G_\gp)=d_p(G_\gp^{\mathrm{ab}})=d_p(I_\gp^{\mathrm{ab}})+1.
\]
For $\gp\notin S_{\mathrm{nr}}$, one has instead
\[d_p(G_\gp)=d_p(I_\gp^{\mathrm{ab}}).\]
We equip  \(H^2_{nr}(G_{\mathfrak{p}},\Z/p\Z)\)  with the induced filtration as follows:
\[
        \operatorname{Fil}_NH^2_{nr}(G_{\mathfrak{p}},\Z/p\Z):=H^2_{nr}(G_{\mathfrak{p}},\Z/p\Z)\cap \operatorname{Fil}_NH^2_{}(G_{\mathfrak{p}},\Z/p\Z)
\]
 for $N\geq 2$. Let \(u_n(G_{\mathfrak p})\) denote the dimension of the degree-\(n\) graded piece of the filtered group \(H^2_{\mathrm{nr}}(G_{\mathfrak p},\mathbb Z/p\mathbb Z)\), i.e., $u_2(G_{\mathfrak p}):=\dim_{\mathbb F_p}
\operatorname{Fil}_2 H^2_{\mathrm{nr}}(G_{\mathfrak p},\mathbb Z/p\mathbb Z)$ and
for \(n\geq 3\),
\[
u_n(G_{\mathfrak p})
:=
\dim_{\mathbb F_p}
\frac{\operatorname{Fil}_n H^2_{\mathrm{nr}}(G_{\mathfrak p},\mathbb Z/p\mathbb Z)}
{\operatorname{Fil}_{n-1} H^2_{\mathrm{nr}}(G_{\mathfrak p},\mathbb Z/p\mathbb Z)}.
\]

Choose a filtration-adapted basis of $H^2_{\mathrm{nr}}(G_{\mathfrak p},\Z/p\Z)$ with respect to the filtration induced from $H^2(G_{\mathfrak p},\Z/p\Z)$. This basis can be extended to a filtration-adapted basis of $H^2(G_{\mathfrak p},\Z/p\Z)$. By \cref{lem:relation mod dual,prop:fildual,Defining relation}, the corresponding dual basis can be lifted to an adapted minimal system of defining relations of $G_{\mathfrak p}$. The defining relations corresponding to the chosen basis of $H^2_{\mathrm{nr}}(G_{\mathfrak p},\Z/p\Z)$ are called the \emph{unramified defining relations}. These form a subset of an adapted minimal system of defining relations of $G_{\mathfrak p}$. Since the filtration is induced, we have $u_n(G_{\mathfrak p})\leq s_n(G_{\mathfrak p})$ for every $n\geq 2$. Moreover, we have the following proposition from \cref{prop:fildual} and \cite[Proposition 4.4]{Iharacompositio}.

\begin{proposition}
\label{prop:local-hnr}
For $\gp\in S$, we have$$\dim_{\F_p}\operatorname{Fil}_NH^2_{nr}(G_{\mathfrak{p}},\Z/p\Z)= \sum_{n=2}^{N}u_n(G_{\mathfrak{p}}).$$
Moreover, for all sufficiently large $N$, we have
\[
 \operatorname{Fil}_NH^2_{nr}(G_{\mathfrak{p}},\Z/p\Z)
 =
 H^2_{\mathrm{nr}}(G_\gp,\Z/p\Z)
   \simeq
   \begin{cases}
      \Z/p\Z, & \text{ if } \gp\in S_{\mathrm{nr}};\\
      0,      & \text{ if } \gp\notin S_{\mathrm{nr}}.
   \end{cases}
\]
\end{proposition}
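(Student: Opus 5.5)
The proof splits into two independent parts: a dimension count for the induced filtration, and the identification of the whole unramified group, which is quoted from Duursma--Mak.

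For the dimension formula I will mimic the proof of \cref{prop:fildual}. By definition, $\operatorname{Fil}_NH^2_{\mathrm{nr}}(G_\gp,\Z/p\Z)$ is the intersection of $H^2_{\mathrm{nr}}$ with $\operatorname{Fil}_NH^2(G_\gp,\Z/p\Z)$. The filtration on $H^2(G_\gp,\Z/p\Z)$ is increasing in $N$, since $\operatorname{Fil}_{N+1}$ of the relation module shrinks, so its annihilator grows. Hence the induced filtration on $H^2_{\mathrm{nr}}$ is an increasing chain of subspaces.

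The bottom term vanishes. By the computation in \cref{prop:fildual}, $\operatorname{Fil}_1H^2(G_\gp,\Z/p\Z)=0$, so the induced $\operatorname{Fil}_1H^2_{\mathrm{nr}}=0$ as well. The quantities $u_n(G_\gp)$ are defined as the successive quotient dimensions, starting from $u_2=\dim\operatorname{Fil}_2$. Telescoping the dimensions along the chain $0=\operatorname{Fil}_1\subseteq\operatorname{Fil}_2\subseteq\cdots\subseteq\operatorname{Fil}_N$ of finite-dimensional $\F_p$-spaces then gives $\dim\operatorname{Fil}_NH^2_{\mathrm{nr}}=\sum_{n=2}^N u_n(G_\gp)$. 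Finite dimensionality holds because $G_\gp$ is a finite $p$-group, or more generally by the standing assumption that $\dim H^2<\infty$.

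For the stabilization claim, I first show that the filtration on $H^2(G_\gp,\Z/p\Z)$ is exhaustive. As in the proof of \cref{Defining relation}, there is an $N_0$ with $R\cap D_{N_0}F\subseteq R^p[R,F]$. Then $\operatorname{Fil}_{N_0}(R/R^p[R,F])=0$, so its annihilator is the whole dual, and $\operatorname{Fil}_NH^2(G_\gp,\Z/p\Z)=H^2(G_\gp,\Z/p\Z)$ for all $N\ge N_0-1$. Intersecting with $H^2_{\mathrm{nr}}$ gives $\operatorname{Fil}_NH^2_{\mathrm{nr}}=H^2_{\mathrm{nr}}$ for large $N$. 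In particular the sum of the $u_n(G_\gp)$ equals $\dim H^2_{\mathrm{nr}}$.

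The remaining identification of $H^2_{\mathrm{nr}}(G_\gp,\Z/p\Z)$ as $\Z/p\Z$ or $0$, according to whether $\gp\in S_{\mathrm{nr}}$, is \cite[Proposition 4.4]{Iharacompositio}, and I will cite it. Its content, for the record, is the following. The quotient $G_\gp/I_\gp$ is procyclic, so $H^2$ of it with $\Z/p\Z$ coefficients is $\Z/p\Z$ if the quotient is a finite cyclic $p$-group and $0$ if it is $\Z_p$ (after passing to the relevant finite level). One then checks whether inflation is injective. Via the five-term sequence and the comparison $d_p(G_\gp)$ versus $d_p(I_\gp^{\mathrm{ab}})$, the image is nonzero exactly when the Frobenius generator is not already accounted for by inertia, which is the defining condition of $S_{\mathrm{nr}}$.

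The main obstacle is this cited local computation. The filtration bookkeeping above is formal, so I would rely on the reference rather than reprove it, only checking that its hypotheses (a local Galois group of a $p$-extension ramified in $S$) match ours.
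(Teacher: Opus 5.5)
Your proposal is correct and follows the paper's route. The paper likewise gets the dimension formula from the same formal telescoping along the induced filtration as in \cref{prop:fildual}, and takes the identification of $H^2_{\mathrm{nr}}(G_\gp,\Z/p\Z)$ directly from \cite[Proposition 4.4]{Iharacompositio}. Your exhaustiveness argument, choosing $N_0$ with $\operatorname{Fil}_{N_0}(R/R^p[R,F])=0$, supplies the ``sufficiently large $N$'' step that the paper leaves implicit; the only quibble is that your aside $H^2(G_\gp/I_\gp,\Z/p\Z)\cong\Z/p\Z$ needs the cyclic quotient to be nontrivial.
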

Let $L/K/k$ be a tower of Galois $p$-extensions of global function fields of characteristic $p$. Assume that $K/k$ is finite and unramified outside a nonempty finite set $S$ of places of $k$, and that $L/K$ is unramified. Let $T_k$ be a nonempty finite set of places of $k$, disjoint from $S$, such that every place in $T_k$ splits completely in $L/k$. Let $T_K$ and $T_L$ denote the sets of all places of $K$ and $L$, respectively, lying above $T_k$. Assume in addition that $T_k$ contains a rational place.

Under the above notation, our aim is to establish an upper bound for $\sum_{n=2}^{N}s_n(G)-d_p(G)$. Our main result is the following theorem.

\begin{theorem}\label{thm:kuhnt}
Under the above notation and assumptions, for every $N\geq 2$, define $\delta_N$ by
\begin{align}
\delta_N := {}&
\sum_{n=2}^{N}s_n(G)-d_p(G)
-\sum_{\mathfrak p\in S\setminus S_{\mathrm{nr}}}
\left(
\sum_{n=2}^{N}s_n(G_{\mathfrak p})
-d_p(G_{\mathfrak p})
\right) \notag\\
&-\sum_{\mathfrak p\in S_{\mathrm{nr}}}
\left(
\sum_{n=2}^{N}
(s_n(G_{\mathfrak p})-u_n(G_{\mathfrak p}))
-d_p(G_{\mathfrak p})+1
\right) \notag\\
&+d_p(\Delta_S^{T_k})+1-|T_k|.
\label{eq:N0-initial}
\end{align}

If $\delta_N>0$, then there is a nontrivial unramified Galois $p$-extension $\widetilde L/L$ in which every place of $T_L$ splits completely and whose constant field is the same as that of $L$. More precisely, the construction gives an elementary abelian quotient of $\Gal(\widetilde L/L)$ of $p$-rank at least $\delta_N$.

Consequently, if $L/K$ is the maximal unramified pro-$p$ extension of $K$ in which every place of $T_K$ splits completely, then
\begin{align*}
\sum_{n=2}^{N}s_n(G)-d_p(G)
&\leq
\sum_{\mathfrak p\in S\setminus S_{\mathrm{nr}}}
\left(
\sum_{n=2}^{N}s_n(G_{\mathfrak p})
-d_p(G_{\mathfrak p})
\right)\\
&\quad+
\sum_{\mathfrak p\in S_{\mathrm{nr}}}
\left(
\sum_{n=2}^{N}
(s_n(G_{\mathfrak p})-u_n(G_{\mathfrak p}))
-d_p(G_{\mathfrak p})+1
\right)\\
&\quad-d_p(\Delta_S^{T_k})-1+|T_k|.
\end{align*}
\end{theorem}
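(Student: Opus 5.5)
We follow the Kuhnt--Duursma--Mak strategy \cite[Theorem 4.1]{Iharacompositio}: realize a subspace of $\operatorname{Fil}_N H^2(G,\Z/p\Z)$ as obstructions to central embedding problems, solve those embedding problems globally, and read off unramified extensions of $L$. The Zassenhaus refinement is that everything is done inside $\operatorname{Fil}_N$ instead of all of $H^2$. The filtration-compatibility needed for this is \cref{Zassenhaus map}(iv) applied to $G_\gp\hookrightarrow G$, together with \cref{prop:fildual}.

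\textbf{Step 1 (local conditions).} Consider the product of restriction maps
\[
\rho:\operatorname{Fil}_N H^2(G,\Z/p\Z)\longrightarrow \bigoplus_{\gp\in S}\frac{\operatorname{Fil}_NH^2(G_\gp,\Z/p\Z)}{\operatorname{Fil}_NH^2_{\mathrm{nr}}(G_\gp,\Z/p\Z)}.
\]
By \cref{Zassenhaus map}(iv) it is well defined. By \cref{prop:fildual} and \cref{prop:local-hnr}, the target has dimension $\sum_{\gp\in S}\sum_{n\le N}(s_n(G_\gp)-u_n(G_\gp))$, where $u_n=0$ off $S_{\mathrm{nr}}$. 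A class $\alpha\in\ker\rho$ is the obstruction to a central $\Z/p$-embedding problem $1\to\Z/p\to E\to G\to1$. Its local restrictions at $\gp\in S$ are unramified classes, and at places outside $S$ the local groups are cyclic (unramified or split) with vanishing $H^2$ in the relevant sense. Hence the local embedding problems are solvable with the kernel acting unramified or split.

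\textbf{Step 2 (global solvability).} Use the local--global principle for $H^2$ of the global field. For function fields, $\operatorname{Br}$ gives an exact sequence whose defect is controlled by a group like $\Delta_S^{T_k}$; this is the \v{S}afarevi\v{c}--Kuhnt formulation \cite[Section 4]{Iharacompositio}. The requirements that places of $T_k$ split and that no constant field extension occurs cost $|T_k|-1$ further conditions, the $-1$ coming from the rational place in $T_k$ that pins down the constant part. For $\alpha\in\ker\rho$ modulo this defect, the global embedding problem has a solution $M/k$ unramified over $L$ in which $T$ splits. We then twist the solution by elements of $H^1(k,\Z/p)$ with prescribed local behaviour to kill ramification. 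This twisting uses up $d_p(G)$ and, locally, the terms $d_p(G_\gp)$, with $d_p(G_\gp)-1$ for $\gp\in S_{\mathrm{nr}}$ because one local character is unramified.

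\textbf{Step 3 (counting).} Counting dimensions gives a space of solutions whose image in $\Hom(\Gal(\widetilde L/L),\Z/p)$ has rank at least
\[
\dim\operatorname{Fil}_NH^2(G)-\dim(\text{target of }\rho)-d_p(G)+\sum_\gp d_p(G_\gp)-|S_{\mathrm{nr}}|+d_p(\Delta_S^{T_k})+1-|T_k|,
\]
which is exactly $\delta_N$. The compositum of the corresponding fields is an unramified elementary abelian extension of $L$ of rank at least $\delta_N$, split at $T_L$, with the same constant field. Maximality of $L$ then forces $\delta_N\le0$, which is the stated inequality.

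\textbf{Main obstacle.} The hard step is Step 2, i.e.\ tracking the global obstruction and twisting exactly as in \cite[Prop.~4.4, Thm.~4.1]{Iharacompositio} while checking that nothing there leaves $\operatorname{Fil}_N$. I would first verify that the Duursma--Mak argument uses $H^2(G)$ only through the restriction maps and the inflation of unramified classes. If so, both are filtration-compatible, by \cref{Zassenhaus map}(iv) and the definition of the induced filtration on $H^2_{\mathrm{nr}}$. Their counting then goes through verbatim with each $r_p$ replaced by the corresponding $\sum_{n\le N}s_n$ and $u_n$.
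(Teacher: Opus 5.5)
Your Step 1 is the paper's argument: the filtered map $\rho$ on $\operatorname{Fil}_N H^2(G,\Z/p\Z)$, its kernel $\operatorname{Fil}_N H^2_{\mathrm{nr}}(G,\Z/p\Z)$, and the lower bound on its dimension from \cref{Zassenhaus map}(iv), \cref{prop:fildual} and \cref{prop:local-hnr}. Your Step 3 bookkeeping also reproduces $\delta_N$.

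The gap is Step 2. It has to produce the terms $-d_p(G)+\sum_{\gp\in S}d_p(G_\gp)-|S_{\mathrm{nr}}|+d_p(\Delta_S^{T_k})+1-|T_k|$, and the mechanism you describe does not.

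First, there is no local--global obstruction to pay for. Since $H^2(G^{T_k}_{S,p},\Z/p\Z)=0$, every nonzero class gives a proper solution that is automatically unramified outside $S$ and split at $T_k$ (\cref{prop:global-embedding-solvable}). So no local analysis outside $S$ is needed; incidentally, $H^2$ of a nontrivial cyclic decomposition group is $\Z/p\Z$, not $0$. In particular $d_p(\Delta_S^{T_k})$ is not a defect: it enters $\delta_N$ with a plus sign, which your description of it as a defect cannot produce.

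Second, ``twisting by $H^1(k,\Z/p\Z)$ with prescribed local behaviour'' cannot be counted naively in characteristic $p$. The group $H^1(k_\gp,\Z/p\Z)$ is infinite (Artin--Schreier), and the solution fields have no a priori bound on their ramification depth at $S$. So no count in terms of $d_p(G_\gp)$ is available until the depth is controlled.

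The missing idea is the depth truncation of \cref{prop:Truncation of high ramification}, which has three stages.
\begin{enumerate}
\item Repeatedly adjoin the elementary abelian extension $E_{\mathbf n,\mathbf n^-}/k$, whose characters all have depth exactly $n_{\mathfrak q}$ at $\mathfrak q$, and divide out the top ramification layer $H_0$. One uses $d_p(H_0)\le ef_{\mathfrak q}$ and $p\nmid n_{\mathfrak q}-1$. This lowers the depth to $\nu_L$ at a total cost of at most $d_p(\Delta_{S,\nu}^{T_k})-d_p(\Delta_S^{T_k})$, while keeping the $p$-rank over $k$ equal to $d_p(G)$ (starting from \cref{Marked solution}).
\item Adjoin $E_{S,\nu}^{T_k}$, gaining at least $d_p(G_{S,\nu}^{T_k})-d_p(G)$, since that $p$-rank bounds the rank of the intersection.
\item Kill inertia at a cost of at most $ef_\gp(\nu_\gp-1-\lfloor(\nu_\gp-1)/p\rfloor)-d_p\bigl(I_\gp^{\mathrm{ab}}/(I_\gp^{\mathrm{ab}})^p\bigr)$ per place. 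Here the subtracted rank is $d_p(G_\gp)-1$ or $d_p(G_\gp)$ according as $\gp\in S_{\mathrm{nr}}$ or not.
\end{enumerate}
Only after inserting \cref{thm:generator rank} do $d_p(\Delta_{S,\nu}^{T_k})$ and the $ef_\gp(\cdots)$ terms cancel, leaving exactly the terms above.

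Your ``main obstacle'' is also misplaced. After Step 1 the filtration plays no further role. The truncation argument applies to the compositum of solutions attached to a basis of any subspace $V\subseteq H^2_{\mathrm{nr}}(G,\Z/p\Z)$ and uses only $\dim V$, so nothing needs to stay inside $\operatorname{Fil}_N$. With Step 2 replaced by this argument, applied to $V=\operatorname{Fil}_N H^2_{\mathrm{nr}}(G,\Z/p\Z)$, your outline becomes the paper's proof.
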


Before we prove the following theorem, we need to introduce some techniques about the global embedding problems which can produce many central elementary abelian extensions above $L$.

\subsection{The embedding problem}

Let $G$ be a profinite group.  An embedding problem for $G$ consists of a surjective continuous homomorphism  $\theta:\mathfrak{G}\twoheadrightarrow G$ and an exact sequence of finite groups
\[
   0\longrightarrow A\longrightarrow E\stackrel{j}{\longrightarrow}G\longrightarrow 1,
\]
where $A$ is abelian.  A solution of the above embedding problem is a continuous homomorphism $\chi:\mathfrak{G}\to E$ such that $j\circ \chi=\theta$.  It is called proper if $\chi$ is surjective. The above exact sequence defines a class $\epsilon\in H^2(G,A).$ The obstruction to solving the embedding problem is the inflation  $\inf{}_\mathfrak{G}^{G}(\epsilon):H^2(G,A)\longrightarrow H^2(\mathfrak{G},A)$ induced by $  \theta:\mathfrak{G}\twoheadrightarrow G$. The embedding problem has a solution if and only if $ \inf{}_\mathfrak{G}^{G}(\epsilon)=1
   \quad\text{in } H^2(G,A)$  from \cite[Proposition 4.1]{Iharacompositio}.

In our case, let $L/k$ be a finite Galois $p$-extension with group $G=\Gal(L/k)$, let $S_k$ be a finite nonempty set of places of $k$ containing all places ramified in $L/k$ and let \(T_k\) be a finite nonempty set of places of \(k\), disjoint from \(S_k\), such that every place in \(T_k\) splits completely in \(L/k\). Let \(S_L\) and \(T_L\) denote the sets of all places of \(L\) lying above \(S_k\) and \(T_k\). Let $ \mathfrak{G}= G^{T_k}_{S_k,p}=\Gal(k^{T_k}_{S_k,p}/k)$,  where  $k^{T_k}_{S_k,p}$ is the maximal pro-$p$ extension of $k$ unramified outside  $S_k$ and in which all places of $T_k$ split completely. Now we denote the above embedding problem by$$(L/k,\epsilon,S_L,T_L).$$
If $S_k$ is nonempty, then  $H^2(\mathfrak{G},\Z/p\Z)=0 $ by \cite[Theorem 1.1]{izv}. Thus, $\inf{}_\mathfrak{G}^{G}(\epsilon)=1$ and the above embedding problem has a solution. If the corresponding exact sequence is nonsplit, then we will get a proper solution correspond to a proper field $M$ such that $\Gal(M/k)=E $, which is unramified outside $S_k$ and in which all places of $T_k$  split completely. The following proposition can be found in \cite[Proposition 4.2]{Iharacompositio}.

\begin{proposition}[Solvability of the global problem]
\label{prop:global-embedding-solvable}
Let $L/k$ be a Galois $p$-extension, and let $S_L,T_L$ be as above.  If
\[
   0\longrightarrow \Z/p\Z\longrightarrow E\longrightarrow \Gal(L/k)\longrightarrow 1
\]
is a nonsplit central extension (meaning that is a nonsplit exact sequence), then the corresponding global embedding problem $$(L/k,\epsilon,S_L,T_L)$$
has a proper solution field $M$ with the same constant field as $L$ such that  $\Gal(M/L)= \Z/p\Z $, unramified outside $S_L,$ in which all places of  $T_L$  split completely.
\end{proposition}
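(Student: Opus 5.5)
The strategy is the one already set up in the discussion of the embedding problem: a nonsplit central extension gives a class $\epsilon\neq 0$ in $H^2(\Gal(L/k),\Z/p\Z)$, and the global obstruction vanishes. Concretely, put $G=\Gal(L/k)$ and $\mathfrak{G}=G^{T_k}_{S_k,p}$, with $\theta:\mathfrak{G}\twoheadrightarrow G$ the natural surjection. This surjection exists because $L/k$ is a $p$-extension unramified outside $S_k$ in which $T_k$ splits completely, so $L\subseteq k^{T_k}_{S_k,p}$.

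\textbf{Step 1 (solvability).} Since $S_k\neq\emptyset$, we have $H^2(\mathfrak{G},\Z/p\Z)=0$ by \cite[Theorem 1.1]{izv}. Hence $\inf_{\mathfrak{G}}^{G}(\epsilon)=0$, and by \cite[Proposition 4.1]{Iharacompositio} there is a continuous lift $\chi:\mathfrak{G}\to E$ with $j\circ\chi=\theta$.

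\textbf{Step 2 (properness).} The image $\chi(\mathfrak{G})$ surjects onto $G$. Its intersection with $A=\Z/p\Z$ is either $A$ or trivial. If it were trivial, $\chi(\mathfrak{G})$ would be a complement to $A$, and the extension would split, contradicting the hypothesis. So $\chi$ is surjective.

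Let $M$ be the fixed field of $\ker\chi$ inside $k^{T_k}_{S_k,p}$. Then $\Gal(M/k)\cong E$ and $\Gal(M/L)\cong A\cong\Z/p\Z$. Since $M\subseteq k^{T_k}_{S_k,p}$, the extension $M/k$, and hence $M/L$, is unramified outside $S_k$ (that is, outside $S_L$), and every place of $T_k$ splits completely in $M$. It follows that all places of $T_L$ split completely in $M/L$.

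\textbf{Step 3 (constant field).} This is the only delicate point, and the hypotheses give it quickly. $T_k$ is nonempty and consists of, or at least contains, a rational place $\mathfrak t$ that splits completely in $M$. Any place of $M$ above $\mathfrak t$ then has residue field $\F_q$. The constant field of $M$ embeds in every residue field, so it is $\F_q$. The same argument applies to $L$, so $M$ and $L$ have the same constant field $\F_q$.

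If the statement is meant for a general nonempty $T_k$ containing a place of degree $f$, I would argue the same way. The full constant field $\F_{q^m}$ of $M$ embeds into the residue field of each place above $\mathfrak t$, which equals that of $\mathfrak t$, since $\mathfrak t$ splits completely in $M/k$. The constant field of $L$ is obtained identically, and because $L\subseteq M$ and the constant extension $M\F_{q^m}/\ldots$ is a $p$-extension, comparing degrees gives equality.

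I expect no real obstacle. The main care is in Step 2 (nonsplit implies proper) and in checking that splitting and unramifiedness pass from $M/k$ to $M/L$.
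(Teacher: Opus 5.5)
Your proof is correct and follows the same route as the paper. The paper gets the result, via \cite[Proposition 4.2]{Iharacompositio}, from the vanishing $H^2(\mathfrak{G},\Z/p\Z)=0$ of \cite[Theorem 1.1]{izv} together with the fact that a nonsplit extension forces any solution to be proper, and your use of a rational place in $T_k$ (a standing assumption of Section~\ref{sec:relation-detailed}) correctly settles the constant-field claim, which the paper leaves implicit.

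Drop the aside on general $T_k$, though. Complete splitting of a place of degree $f$ only shows that the constant-field degrees of $L$ and of $M$ both divide $f$, and equality can genuinely fail: if $L=k\F_{q^p}$ and every place of $T_k$ has degree divisible by $p^2$, then $M=k\F_{q^{p^2}}$ is a proper solution for $E=\Z/p^2\Z$. In that generality you would have to twist $\chi$ by a suitable character rather than compare degrees.
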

For each nonzero class $\epsilon\in H^2(G,\mathbb Z/p\mathbb Z)$, the preceding proposition produces a proper solution field $M$. The next two propositions explain why, if one wants to construct many central elementary abelian extensions above $L$, the classes $\epsilon$ should be chosen to be linearly independent. More details about this part can be found in \cite[Chapter 3 and Chapter 4]{Kuhnt}.

\begin{proposition}\cite[\S6.6]{Weibel}
\label{prop:central extension}
Let $G$ be a group and $A$ an abelian group equipped with the trivial $G$-action. Let $E(G,A)$ denote the set of equivalence classes of central extensions of  $G$ by $A$. Then there is a natural bijection
\[
E(G,A)
\;\xleftrightarrow{\;\sim\;}
H^2(G,A).
\]
\end{proposition}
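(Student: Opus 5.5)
The natural approach is the standard factor-set (cocycle) construction. First I would attach a class to each central extension. Given a central extension
\[
0\to A\xrightarrow{i} E\xrightarrow{j} G\to 1
\]
with $A$ central, so that the conjugation action of $G$ on $A$ is trivial, choose a set-theoretic section $s:G\to E$ with $s(1)=1$. Define
\[
f(g,h)=s(g)s(h)s(gh)^{-1}\in A .
\]
Associativity in $E$ shows that $f$ is a normalized $2$-cocycle; centrality of $A$ is exactly what makes the cocycle identity involve only the trivial action. Replacing $s$ by $s'=\lambda s$ with $\lambda:G\to A$ changes $f$ by the coboundary $\delta\lambda$. If $\phi:E\to E'$ is an equivalence of extensions, then $\phi\circ s$ is a section of $E'$ giving the same cocycle. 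Hence this assignment descends to a well-defined map $E(G,A)\to H^2(G,A)$.

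Next I would construct the inverse. For a normalized cocycle $f$, put the group law
\[
(a,g)(b,h)=(a+b+f(g,h),\,gh)
\]
on the set $A\times G$. The cocycle identity gives associativity. Normalization makes $(0,1)$ the identity, and inverses are
\[
(a,g)^{-1}=(-a-f(g^{-1},g),\,g^{-1}).
\]
Then $a\mapsto(a,1)$ embeds $A$ centrally, and $(a,g)\mapsto g$ is surjective with kernel $A$. If $f'=f+\delta\lambda$, then $(a,g)\mapsto(a+\lambda(g),g)$ is an equivalence of the two extensions. Every class has a normalized representative (subtract $\delta$ of the constant $f(1,1)$), so this gives a map $H^2(G,A)\to E(G,A)$.

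Finally I would check that the two maps are mutually inverse. Starting from $E$ with section $s$, the map $(a,g)\mapsto i(a)s(g)$ is an isomorphism from the twisted product onto $E$ compatible with the inclusion of $A$ and the projection to $G$. Starting from $f$, the section $g\mapsto(0,g)$ recovers exactly $f$. For the profinite setting used later, where $G$ is finite and $A=\Z/p\Z$, continuity is automatic.

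Naturality means compatibility with pullback along $G'\to G$ and with pushout along $A\to A'$. This follows because composing sections, or composing with the map on $A$, commutes with the cocycle construction. The only delicate point is the bookkeeping of equivalence versus isomorphism, which is handled by requiring the identity on $A$ and on $G$. Under this bijection the split extension corresponds to $0$, which is the form used in \cref{prop:global-embedding-solvable}.
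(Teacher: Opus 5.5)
Your proposal is correct and is the standard factor-set argument from Weibel~\S6.6; the paper gives no proof of its own and simply cites that source, so your route is the same. One harmless slip: with your convention $\delta\lambda(g,h)=\lambda(g)+\lambda(h)-\lambda(gh)$, the map $(a,g)\mapsto(a+\lambda(g),g)$ is an equivalence from the extension built from $f+\delta\lambda$ to the one built from $f$ (for the other direction use $a-\lambda(g)$), and this does not affect the argument.
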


\begin{proposition}[Independence of marked proper solutions]
\label{Marked solution}
Let \(L/k\), \(S_L\), \(T_L\) be defined as above, and put \(G=\Gal(L/k)\). Let $ \epsilon_1,\ldots,\epsilon_n\in H^2(G,\mathbb F_p)$ be linearly independent. For each \(i\), let \(M_i\) be a proper solution field of the marked embedding problem
\[
        (L/k,\epsilon_i,S_L,T_L).
\]
Put $M=M_1M_2\cdots M_n .$ Then we have  $ d_p(\Gal(M/L))=n $ and $d_p(\Gal(M/k))=d_p(G)$. Moreover, \(M/L\) is unramified outside \(S_L\), and every place of \(T_L\) splits completely in \(M/L\).
\end{proposition}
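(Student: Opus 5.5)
The plan is to treat the three conclusions separately: properties of each $M_i$, the rank of $\Gal(M/L)$, and $d_p(\Gal(M/k))$.

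\emph{Properties of each $M_i$.} By \cref{prop:central extension}, each nonzero $\epsilon_i$ corresponds to a nonsplit central extension
\[
0\to\mathbb F_p\to E_i\xrightarrow{j_i} G\to1 .
\]
Linear independence forces $\epsilon_i\neq 0$, so the extension is indeed nonsplit. By \cref{prop:global-embedding-solvable}, each proper solution $M_i$ satisfies the following: $\Gal(M_i/L)\cong\mathbb F_p$, $M_i/L$ is unramified outside $S_L$, and every place of $T_L$ splits completely. These properties pass to the compositum $M$. Hence $M/L$ is unramified outside $S_L$ and the places of $T_L$ split completely in $M/L$.

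\emph{The rank of $\Gal(M/L)$.} Let $\chi_i:\mathfrak G\to E_i$ be the solutions and consider
\[
\chi=(\chi_i)_i:\mathfrak G\to E_1\times_G\cdots\times_G E_n .
\]
Its image is $\Gal(M/k)$, and $\Gal(M/L)$ embeds in the central subgroup $\mathbb F_p^n$. So $\Gal(M/L)$ is elementary abelian and central in $\Gal(M/k)$, of rank $m\le n$.

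Suppose $m<n$. Then there is a nonzero linear form $\lambda=(a_1,\dots,a_n)$ on $\mathbb F_p^n$ vanishing on $\Gal(M/L)$. Pushing out along $\lambda$, the extension class $\sum a_i\epsilon_i$ (Baer sum) corresponds to the extension $(\mathbb F_p^n\text{-fibre product})/\ker\lambda$. The restriction of this extension to the subgroup $\Gal(M/k)$ splits, because $\lambda$ kills $\Gal(M/L)$, so $\Gal(M/k)/(\Gal(M/L))\cong G$ gives a section. More precisely, $\Gal(M/k)$ maps isomorphically onto $G$ inside the pushout, which is a splitting. Hence $\sum a_i\epsilon_i=0$, contradicting linear independence. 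Therefore $m=n$.

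This Baer-sum compatibility is the step I expect to be the main obstacle. I would verify it carefully with explicit 2-cocycles: if $c_i$ represents $\epsilon_i$, then the fibre product has cocycle $(c_i)_i$ and the pushout has cocycle $\sum a_ic_i$.

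\emph{The generator rank $d_p(\Gal(M/k))$.} Put $\Gamma=\Gal(M/k)$ and $Z=\Gal(M/L)$, so that $\Gamma/Z=G$. The inflation $H^1(G,\mathbb F_p)\to H^1(\Gamma,\mathbb F_p)$ is injective. By the five-term sequence
\[
0\to H^1(G)\to H^1(\Gamma)\to H^1(Z)^{\Gamma}\xrightarrow{\tg} H^2(G),
\]
it suffices to show that the transgression is injective on $H^1(Z)=\Hom(Z,\mathbb F_p)$.

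For the central extension $\Gamma$, the transgression sends $\lambda\in\Hom(Z,\mathbb F_p)$ to the pushout class. Under the identification $Z=\mathbb F_p^n$, this class is $\sum a_i\epsilon_i$, which is nonzero whenever $\lambda\neq0$ by independence. So the transgression is injective, $H^1(\Gamma)=H^1(G)$, and $d_p(\Gal(M/k))=d_p(G)$. This uses the same cocycle computation as the previous step, so the whole argument reduces to that one verification.
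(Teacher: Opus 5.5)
Your proof is correct. It is more self-contained than the paper's, which proves only the ramification and splitting assertion directly (by passing to the compositum, as you do). For the two rank statements the paper simply cites: $d_p(\Gal(M/L))=n$ from \cite[Proposition 4.3]{Iharacompositio}, and $d_p(\Gal(M/k))=d_p(G)$ from \cite[Theorem 3.18]{Kuhnt}.

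Your argument replaces both citations with a direct computation. It uses only \cref{prop:central extension}, functoriality of $H^2(G,-)$ in the coefficients, and the five-term sequence. It also shows that both rank statements rest on one fact: $\lambda_*(\epsilon_1,\dots,\epsilon_n)=\sum_i a_i\epsilon_i\neq 0$ for every nonzero $\lambda\colon\F_p^n\to\F_p$.

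The pushout step is sound. Since $\Gamma\cap\F_p^n=Z\subseteq\ker\lambda$, the image of $\Gamma=\Gal(M/k)$ in $P/\ker\lambda$ meets the kernel $\F_p^n/\ker\lambda$ trivially and still surjects onto $G$. It is therefore a complement, and this extension class vanishes.

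One point deserves to be stated explicitly in your transgression step, where you identify the class of $1\to Z\to\Gamma\to G\to 1$ with $(\epsilon_1,\dots,\epsilon_n)$. Once you know $m=n$, the subgroup $\Gamma$ of $P=E_1\times_G\cdots\times_G E_n$ contains the full kernel $\F_p^n$ and surjects onto $G$. Hence $\Gamma=P$; equivalently, both have order $p^n|G|$. So the inclusion $\Gamma\hookrightarrow P$ is an isomorphism of extensions, and $\tg(\lambda)=\pm\sum_i a_i\epsilon_i$ follows from the standard description of transgression for central extensions. With that sentence added, the argument is complete.
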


\begin{proof}The first assertion $ d_p(\Gal(M/L))=n$  follows from \cite[Proposition 4.3]{Iharacompositio} and the second assertion $d_p(\Gal(M/k))=d_p(G)$ follows from \cite[Theorem 3.18]{Kuhnt}. Each \(M_i/L\) is unramified outside \(S_L\), and every place of \(T_L\) splits completely in \(M_i/L\). These two properties are preserved under compositum. Hence, \(M/L\) is unramified outside \(S_L\) and every place of \(T_L\) splits completely in \(M/L\).
\end{proof}

\subsection{Class field theory of wildly ramified extensions}

In this subsection, we recall several ramification facts that will be used to control the truncation of high ramification given in \cite{Iharacompositio, Kuhnt}.

\begin{lemma}
\cite[Lemma 4.8]{Iharacompositio}\label{lem:base-change-depth} Let $L/k$ be a Galois extension, and let $k'/k$ be a Galois extension linearly disjoint from $L$.  If $k'/k$ has ramification depth at most $v$ at $\gp$, then $Lk'/L$ has ramification depth at most
\[
   \psi_{L/k,\gp}(v)
\]
at every place of $L$ above $\gp$, where $\psi$ is the inverse Herbrand function as defined in Section 3.1.
\end{lemma}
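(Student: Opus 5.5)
The statement to prove is \cref{lem:base-change-depth}: $Lk'/L$ has ramification depth at most $\psi_{L/k,\gp}(v)$ above $\gp$. The plan is to reduce it to a purely local statement about upper ramification groups and then apply Herbrand's theorem on quotients.

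First I localize. Fix a place $\gP$ of $L$ above $\gp$ and a place $\mathfrak Q$ of $Lk'$ above $\gP$. Because the statement only concerns ramification groups, I can pass to completions. Put $\Gamma=\Gal(Lk'_{\mathfrak Q}/k_\gp)$, which is the decomposition group of $\mathfrak Q$ in $\Gal(Lk'/k)$. Inside $\Gamma$ sit two subgroups:
\begin{itemize}
\item $H=\Gal(Lk'_{\mathfrak Q}/L_\gP)$, whose fixed field is $L_\gP$;
\item $N=\Gal(Lk'_{\mathfrak Q}/k'_{\mathfrak q})$, where $\mathfrak q$ is the place of $k'$ below $\mathfrak Q$.
\end{itemize}
Linear disjointness gives $\Gal(Lk'/k)\cong \Gal(L/k)\times\Gal(k'/k)$ globally. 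Locally I only need that $H\cap N$ is trivial, because $Lk'$ is the compositum of $L$ and $k'$. The hypothesis on $k'$ says $\Gamma^{v}$ maps to the trivial group in $\Gamma/N=\Gal(k'_{\mathfrak q}/k_\gp)$. This uses Herbrand: upper numbering is compatible with quotients, so $(\Gamma/N)^v=\Gamma^vN/N$. Hence $\Gamma^v\subseteq N$.

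Next I pass from the upper filtration on $\Gamma$ to the filtration on $H$. Lower numbering is compatible with subgroups, $H_u=\Gamma_u\cap H$. Write $\Gamma^v=\Gamma_{\psi_{Lk'/k}(v)}$. Then
\[
H_{\psi_{Lk'/k}(v)}=\Gamma_{\psi_{Lk'/k}(v)}\cap H=\Gamma^v\cap H\subseteq N\cap H=1.
\]
Using transitivity $\psi_{Lk'/k}=\psi_{Lk'/L}\circ\psi_{L/k}$, the index $\psi_{Lk'/k}(v)$ in the lower numbering of $H=\Gal(Lk'_{\mathfrak Q}/L_\gP)$ corresponds to the upper index $\varphi_{Lk'/L}(\psi_{Lk'/k}(v))=\psi_{L/k}(v)$. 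Therefore $H^{\psi_{L/k,\gp}(v)}=1$, which is exactly the claimed depth bound at $\gP$.

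The main obstacle is the bookkeeping of conventions. Ramification groups with index $\ge$ versus $>$ have to match the definition ``$G^n$ trivial'' from Section 3.1. When $L/k$ is not Galois locally, I will work with $\Gamma$ as above so that $\varphi$ and $\psi$ are defined. If $v$ is not an integer, I will use the standard extension of the filtrations to real indices and check that the triviality statement is preserved.
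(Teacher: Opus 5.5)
Your argument is correct and complete. The paper gives no proof of its own here and simply quotes Duursma--Mak, Lemma 4.8. Your route is the standard proof: Herbrand's quotient theorem gives $\Gamma^v\subseteq N$; lower numbering is compatible with the subgroup $H$; and the transitivity $\psi_{Lk'/k}=\psi_{Lk'/L}\circ\psi_{L/k}$ applies because $H$ is normal in $\Gamma$, since $L/k$ is Galois. Your caveat about $L$ not being Galois locally is therefore unnecessary. Linear disjointness is never actually used, since $H\cap N=1$ holds for any compositum. The convention worry is also moot, because $H^w=H_{\psi_H(w)}$ holds for all real $w\ge -1$, which matches the paper's definition of depth as triviality of $G^n$.
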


\begin{lemma}
\cite[Lemma 4.9]{Iharacompositio}\label{lem:jumps-prime-to-p} Let $E/F$ be an elementary abelian $p$-extension of global function fields of characteristic $p$.  Then every upper ramification jump of $E/F$ is prime to $p$.
\end{lemma}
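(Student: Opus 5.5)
The plan is to reduce to cyclic extensions of degree $p$, and then to do an explicit local Artin--Schreier computation. Ramification jumps are local, so we fix a place $\gP$ of $E$ above a place $\gp$ of $F$. We then work with the completed extension $E_\gP/F_\gp$. Its Galois group $D$ is the decomposition group, which is a subgroup of an elementary abelian $p$-group and hence itself elementary abelian. Moreover, upper numbering of the global extension at $\gP$ is by definition that of $D$. If $\gp$ is unramified there are no jumps, so we assume $\gp$ is ramified.

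\emph{Step 1 (reduction to degree $p$).} Let $v$ be an upper jump of $D$, i.e.\ $D^{v}\neq D^{v+\varepsilon}$ for all $\varepsilon>0$. Since $D$ is elementary abelian, we can choose a character $\chi:D\to\F_p$ that is trivial on $D^{v+\varepsilon}$ (for small $\varepsilon$) but nontrivial on $D^{v}$. Let $E'$ be the fixed field of $\ker\chi$, a cyclic extension of $F_\gp$ of degree $p$. Upper numbering is compatible with quotients (Herbrand's theorem): $(D/\ker\chi)^{w}=D^{w}\ker\chi/\ker\chi$. Hence the unique upper jump of $E'/F_\gp$ is exactly $v$. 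So it suffices to show that a ramified cyclic extension of degree $p$ of $F_\gp\cong\F_{q^{f}}((\pi))$ has its jump prime to $p$. For degree $p$, lower and upper jumps coincide, since $\varphi$ is the identity up to the first jump.

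\emph{Step 2 (reduced Artin--Schreier equation).} By Artin--Schreier theory, $E'=F_\gp(y)$ with $y^{p}-y=a$ for some $a\in F_\gp$. Changing $a$ by $b^{p}-b$ does not change the extension, and we use this to normalize $a$. If $v_\gp(a)\ge 0$, Hensel's lemma shows the extension is unramified (or trivial), a contradiction. So $v_\gp(a)=-m$ with $m>0$.

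Suppose $p\mid m$. Write the leading term of $a$ as $c\,\pi^{-m}$. Because the residue field is finite, hence perfect, $c=c_0^{p}$ for some $c_0$. Replacing $a$ by $a-\bigl((c_0\pi^{-m/p})^{p}-c_0\pi^{-m/p}\bigr)$ strictly increases the valuation of $a$, since the correction term $c_0\pi^{-m/p}$ has valuation $-m/p>-m$. Iterating, the process either reaches valuation $\ge0$ (excluded) or stops at some $m$ prime to $p$. The process terminates because $m$ decreases strictly.

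\emph{Step 3 (computing the jump).} With $v_\gp(a)=-m$ and $p\nmid m$, we have $v_{E'}(y)=-m$ and $e=p$, so $E'/F_\gp$ is totally ramified. Since $\gcd(m,p)=1$, choose integers $\alpha,\beta$ with $\alpha m-\beta p=-1$; then $\Pi=y^{\alpha}\pi^{\beta}$ is a uniformizer of $E'$. For the generator $\sigma:y\mapsto y+1$ we compute
\[
\sigma(\Pi)/\Pi=(1+y^{-1})^{\alpha}=1+\alpha y^{-1}+\cdots .
\]
This has valuation $m$ away from $1$, because $\alpha$ is prime to $p$ (from $\alpha m\equiv -1\pmod p$). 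Hence $v_{E'}(\sigma\Pi-\Pi)=m+1$, and the lower (equal to upper) jump is $m$, which is prime to $p$.

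The main obstacle I expect is Step 1: making the compatibility of upper numbering with quotients precise for the decomposition group, so that every jump of $E/F$ really appears as the jump of some degree-$p$ subextension. The local computation in Steps 2--3 is classical, and I would follow Serre's \emph{Local Fields}, Ch.~IV, for the normalizations.
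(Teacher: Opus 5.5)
The paper gives no proof of this lemma; it quotes it from Duursma--Mak, so there is no in-paper argument to compare with. Taken on its own terms, your proof is correct and is the standard elementary one: reduce to a degree-$p$ quotient via Herbrand's theorem $(D/H)^w=D^wH/H$, then read off the jump of an Artin--Schreier extension in reduced form.

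The step you flag as the main obstacle works exactly as you wrote it. The upper filtration takes only finitely many values, so $D^{v+\varepsilon}$ is a fixed proper subgroup of $D^v$ for all small $\varepsilon>0$. The quotient formula then gives $(D/\ker\chi)^w=D/\ker\chi$ for $w\le v$ and $(D/\ker\chi)^w=1$ for $w>v$.

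Three points deserve a line each:
\begin{itemize}
\item State that ``jump'' means $v\ge 0$. If the residue extension is nontrivial, then $D^{-1}=D\neq D^{0}$, and a $\chi$ chosen at $v=-1$ cuts out an unramified $E'$. The ``contradiction'' in Step 2 then does not apply, although $-1$ is trivially prime to $p$.
\item The claim $e=p$, $v_{E'}(y)=-m$ follows from $p\,v_{E'}(y)=v_{E'}(a)=-em$ together with $p\nmid m$.
\item Computing $i(\sigma)=v_{E'}(\sigma\Pi-\Pi)$ uses $\mathcal O_{E'}=\mathcal O_{F_\gp}[\Pi]$, which holds because $E'/F_\gp$ is totally ramified.
\end{itemize}

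An alternative route, closer to the class-field-theoretic language of the paper, goes through local reciprocity. For the abelian extension $E_\gP/F_\gp$, the reciprocity map sends $U_\gp^{(n)}$ onto $D^n$. Its kernel contains $F_\gp^{*p}$ because $D$ is elementary abelian. For $n=pk$ one has $U_\gp^{(pk)}\subseteq (U_\gp^{(k)})^p\,U_\gp^{(pk+1)}$, since $(1+c\pi^k)^p=1+c^p\pi^{pk}$ and the residue field is perfect. Hence $D^{pk}=D^{pk+1}$.

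That version is shorter and skips the reduction to degree $p$. However, it relies on Hasse--Arf and on the compatibility of reciprocity with upper numbering (Serre, \emph{Local Fields}, Ch.~XV). Your version is self-contained, gives integrality of the jumps for free, and identifies each jump with a reduced pole order $m$.

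The two are really the same computation seen from dual sides. Your normalization $a\mapsto a-\wp(c_0\pi^{-m/p})$, with $\wp(x)=x^p-x$, is the Artin--Schreier counterpart of the $p$-th power identity on units.
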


\begin{lemma}
\cite[Lemma 4.10]{Iharacompositio}\label{lem:inertia-rank-bound} Let $E/F$ be a $p$-extension of global function fields of characteristic $p$, with exact constant field $\F_{p^e}$.  Let $\gp$ be a place of $F$ of degree $f_\gp$, and assume that the local extension is abelian and has ramification depth at most \(\nu_{\mathfrak p}\in \Z_{\geq1}\).  Then the inertia group $I_\gp$ satisfies
\[
   d_p(I_\gp)
   \le e f_\gp\bigl(\nu_\gp-1-\floor{(\nu_\gp-1)/p}\bigr).
\]
\end{lemma}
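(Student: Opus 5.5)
The plan is to reduce to local class field theory at $\gp$ and then count. Pass to the completion $F_\gp$. Since the local extension $E_\gP/F_\gp$ is an abelian $p$-extension, its Galois group is $G_\gp$ and $I_\gp\subseteq G_\gp$. The residue field of $F_\gp$ is $\F_{p^{ef_\gp}}$ because the constant field is $\F_{p^e}$ and $\deg\gp=f_\gp$. Let $N\subseteq F_\gp^*$ be the norm group of $E_\gP/F_\gp$. The local reciprocity map $\theta:F_\gp^*/N\xrightarrow{\sim}G_\gp$ has two properties we will use: it maps $U_\gp N/N$ onto $I_\gp$, and for the upper numbering it maps $U_\gp^{(v)}N/N$ onto $G_\gp^v$ for integers $v\ge1$. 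These are the standard compatibilities of local class field theory with upper ramification groups, as in Serre's \emph{Local Fields}, Ch.~XV.

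The first key step is to turn the depth hypothesis into a statement about the norm group. Ramification depth at most $\nu_\gp$ means $G_\gp^{\nu_\gp}=1$, hence $U_\gp^{(\nu_\gp)}\subseteq N$. Therefore $I_\gp\cong U_\gp/(U_\gp\cap N)$ is a quotient of $U_\gp/U_\gp^{(\nu_\gp)}$. We have
\[
U_\gp=\F_{p^{ef_\gp}}^*\times U_\gp^{(1)},
\]
and the first factor has order prime to $p$ while $I_\gp$ is a $p$-group, so $I_\gp$ is in fact a quotient of $U_\gp^{(1)}/U_\gp^{(\nu_\gp)}$. Because $I_\gp$ is abelian, $d_p(I_\gp)=\dim_{\F_p}I_\gp/I_\gp^p$. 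This group is a quotient of
\[
U_\gp^{(1)}/(U_\gp^{(1)})^pU_\gp^{(\nu_\gp)},
\]
and so
\[
d_p(I_\gp)\le d_p\bigl(U_\gp^{(1)}/U_\gp^{(\nu_\gp)}\bigr).
\]

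The second step is the computation of this $p$-rank. By \cite[Lemma 4.2.5]{HXrationalbook}, already invoked in the proof of \cref{thm:generator rank}, it equals $ef_\gp\bigl(\nu_\gp-1-\floor{(\nu_\gp-1)/p}\bigr)$, which gives the bound. To make the count explicit, use a uniformizer $\pi$ of $F_\gp$. Modulo $p$-th powers and $U_\gp^{(\nu_\gp)}$, the group $U_\gp^{(1)}$ is generated by the elements $1+a\pi^i$ with $1\le i\le\nu_\gp-1$, $p\nmid i$, and $a$ running over an $\F_p$-basis of the residue field. Levels $i$ divisible by $p$ contribute nothing, because in characteristic $p$ we have $1+a^p\pi^{pj}=(1+a\pi^j)^p$ and the residue field is perfect. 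Each admissible $i$ contributes at most $ef_\gp$ generators, and there are exactly $\nu_\gp-1-\floor{(\nu_\gp-1)/p}$ admissible $i$ in $[1,\nu_\gp-1]$.

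The main obstacle is the normalization in the first step. We must make sure that the paper's convention, namely that "depth at most $\nu$" means $G^{\nu}_{\gP}=1$, matches the class field theory identification $\theta(U^{(v)})=G^v$ at the exact integer index $v=\nu_\gp$, and not at $\nu_\gp-1$ or $\nu_\gp+1$. I would check this against Serre's numbering, under which $U^{(n)}\mapsto G^n$ for abelian extensions. If the convention turned out to be off by one, the same argument would still apply with $U_\gp^{(\nu_\gp)}$ replaced by the appropriate unit group.
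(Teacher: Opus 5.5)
Your argument is correct. The paper does not prove this lemma itself; it quotes Duursma--Mak. Your route is the standard one: local reciprocity puts $U_\gp^{(\nu_\gp)}$ inside the norm group, so $I_\gp$ is a $p$-group quotient of $U_\gp^{(1)}/U_\gp^{(\nu_\gp)}$, and the $p$-rank of that group is the unit count of \cite[Lemma 4.2.5]{HXrationalbook}, which the paper already uses in \cref{thm:generator rank}.

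Your worry about normalization is unfounded. Serre's theorem gives $\theta(U^{(n)})=G^n$ for integers $n\ge 0$. This matches the paper's convention that depth at most $\nu$ means $G^{\nu}=1$, equivalently conductor exponent at most $\nu$, so no shift is needed.
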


\subsection{The proof of
\texorpdfstring{\cref{thm:kuhnt}}
{Theorem \ref{thm:kuhnt}}}

Let  $L/K/k$ be a tower of Galois \(p\)-extensions of global function fields of characteristic \(p\). We say that a Galois extension $L/K$ is central over the Galois extension $K/k$, if $\Gal(L/K)$ is contained in the center of $\Gal(L/k)$. Let \(T_k\) be a finite nonempty set of places of \(k\). Assume all the places of $T_k$ split completely in $L/k$. Let $ G=\Gal(L/k)$. Assume that \(K/k\) is finite and unramified outside a finite set \(S\) of places of \(k\) and the ramification depth of \(L/k\) is bounded by $ \nu=(\nu_{\mathfrak p})_{\mathfrak p\in S}.$ Define the global unramified part of $H^2(G,\Z/p\Z)$ by
\[
   H^2_{\mathrm{nr}}(G,\Z/p\Z)
   =\{\epsilon\in H^2(G,\Z/p\Z):
       \res^G_{G_\gp}(\epsilon)
       \in H^2_{\mathrm{nr}}(G_\gp,\Z/p\Z)
       \text{ for every }\gp\in S\}.
\]
Let $V$ be the subspace of $H^2_{nr}(G,\Z/p\Z)$ generated by a basis  $ \epsilon_1,\ldots,\epsilon_n$. Thus,  each $ \epsilon_i$ gives a proper global embedding solution field $Y_i$  of \[
        (L/k,\epsilon_i,S_L,T_L)
\] by \cref{Marked solution}. Let $N=Y_1\cdots Y_n$. Now all the places in $T_k$ split completely in $N/k$ and the constant field of $N$ is the same as $L.$

\begin{proposition}
\label{prop:Truncation of high ramification}
Under the above notations.
\begin{enumerate}
    \item There exists a finite extension \(M/L\) such that  \(M/k\) is a Galois extension, \(M/L\) is central over \(L/k\) and the ramification depth of \(M/L\) is bounded by the lifted bound \(\nu_L\). Moreover, all the places of  $T_k$ split completely in $M/k$ and
\[d_p(\Gal(M/L))\ge d_p(\Gal(N/L))+d_p(\Delta_S^{T_k})-d_p(\Delta_{S,\nu}^{T_k})
        +d_p(G_{S,\nu}^{T_k})-d_p(G).\]
\item There exists a finite unramified extension \(\tilde{L}/L\), Galois over \(k\), such that \(\tilde{L}/L\) is central over \(\tilde{L}/k\). Moreover, all the places of  $T_k$ split completely in $\tilde{L}/k$ and
$$ d_p(\Gal(\tilde{L}/L))\ge   d_p(\Gal(N/L)) +d_p(\Delta_S^{T_k}) +1-|{T_k}|
        -d_p(G)+\sum_{\mathfrak q\in S} d_p(G_\mathfrak q)-|S_{nr}|.$$

\end{enumerate}
\end{proposition}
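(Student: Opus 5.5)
For part (i), I would take $M$ to be the compositum $N\cdot (L\,k^{T_k}_{S,\nu,\mathrm{ab}})$. Here $N=Y_1\cdots Y_n$ is the compositum of the proper solution fields from \cref{Marked solution}, and $k^{T_k}_{S,\nu,\mathrm{ab}}$ is the maximal elementary abelian $p$-extension of $k$ inside $k^{T_k}_{S,\nu}$. The construction has three steps.

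First, I would handle the truncation of depth. Each $Y_i/L$ is a central $\Z/p\Z$-extension, unramified outside $S_L$, but with possibly unbounded depth. Since $\epsilon_i\in H^2_{\mathrm{nr}}(G,\Z/p\Z)$, the local restriction $\res^G_{G_\gp}(\epsilon_i)$ is inflated from the unramified quotient. Hence the local embedding problem at $\gp$ has an unramified-over-$L$ solution. The global solution $Y_i$ differs from this local solution by a character of $\mathfrak G$, i.e.\ by a twist by a $\Z/p\Z$-extension of $k$. So I would twist each $Y_i$ by a suitable elementary abelian extension $k'_i/k$ unramified outside $S$ to kill the excess ramification.

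Second, I would count what the twisting costs. The twists come from the maximal elementary abelian $p$-extension of $k$ unramified outside $S$ with $T_k$ split, whose dual group is controlled by $\Delta_S^{T_k}$. Those twists with depth $\le\nu$ are controlled by $\Delta_{S,\nu}^{T_k}$, together with the local unit group quotients via \cref{thm:generator rank}. The quotient of the twist group by the twists of bounded depth then has rank
\[
d_p(G_{S}^{T_k,\mathrm{ab}}/p)-d_p(G_{S,\nu}^{T_k}).
\]
The resulting loss in $d_p(\Gal(M/L))$ is at most this rank, which is where $d_p(\Delta_S^{T_k})-d_p(\Delta_{S,\nu}^{T_k})$ enters. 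Conversely, composing with $L k_{S,\nu}^{T_k,\mathrm{ab}}$ contributes $d_p(G^{T_k}_{S,\nu})-d_p(G)$ new independent central elements. These are central because they come from abelian extensions of $k$, and the count holds because $d_p(\Gal(M/k))=d_p(G)$ for the $N$-part (\cref{Marked solution}). Depth over $L$ is bounded by $\nu_L$ via \cref{lem:base-change-depth}. Splitting at $T_k$ is preserved under compositum.

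Third, for part (ii), I would kill the remaining ramification of $M/L$ at the places above $S$. Since $M/L$ is central, the inertia groups $I_\gp(M/L)$ are images of local inertia groups. I would take $\tilde L$ to be the fixed field in $M$ of the subgroup generated by all inertia groups of $M/L$ above $S$. This subgroup is normal in $\Gal(M/k)$ because it is central, so $\tilde L/k$ is Galois, and $\tilde L/L$ is unramified with $T_k$ still split. I would then bound the rank of the removed subgroup by $\sum_{\gp\in S}d_p(I_\gp^{ab})$ computed from the local abelianized inertia. By \cref{lem:inertia-rank-bound} and \cref{thm:generator rank}, the term $\sum_\gp e f_\gp(\nu_\gp-1-\floor{(\nu_\gp-1)/p})$ cancels against $d_p(G^{T_k}_{S,\nu})=1-|T_k|+d_p(\Delta^{T_k}_{S,\nu})+\sum_\gp n_\gp$. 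By the $S_{\mathrm{nr}}$ dichotomy, the local inertia contribution equals $d_p(G_\gp)$, minus one when $\gp\in S_{\mathrm{nr}}$; this produces the terms $\sum_{\gp\in S} d_p(G_\gp)-|S_{\mathrm{nr}}|$. Substituting into (i) gives the inequality in (ii).

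The main obstacle is the twisting step. One must show that a single global twist simultaneously reduces all local depths to $\le\nu_L$, and that the cost is exactly measured by the $\Delta$-quotient. I would first try a Grunwald-type approximation via the idelic sequence in the proof of \cref{thm:generator rank}, following \cite[Chapter 4]{Kuhnt}.
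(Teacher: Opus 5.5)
Your twisting route differs from the paper's proof. The paper strips excess ramification one depth layer at a time. It adjoins the extension $E_{\mathbf n,\mathbf n^-}/k$ cut out by a complement of $H^1(G^{T_k}_{S,\mathbf n^-},\F_p)$ in $H^1(G^{T_k}_{S,\mathbf n},\F_p)$, then passes to the fixed field of the top upper ramification group $H_0$, using $d_p(H_0)\le ef_{\mathfrak q}$ and \cref{lem:jumps-prime-to-p}. Each step costs $d_p(\Delta^{T_k}_{S,\mathbf n^-})-d_p(\Delta^{T_k}_{S,\mathbf n})$, and the costs telescope.

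A one-shot global twist can replace this iteration. However, the step you defer as ``the main obstacle'' is the whole content of (i), and the accounting you sketch for it does not work:
\begin{itemize}
\item The discrepancy between the global solution $Y_i$, restricted to $G_{k_\gp}$, and an unramified-type local solution is a character of the local group $G_{k_\gp}$, not of $\mathfrak G$.
\item The quantity $d_p(G_S^{T_k,\mathrm{ab}}/p)-d_p(G^{T_k}_{S,\nu})$ is infinite in characteristic $p$: there are Artin--Schreier extensions of unbounded conductor at a single place.
\item A larger supply of twists can only reduce the loss, so that quantity cannot measure the cost.
\end{itemize}
What measures the cost is the cokernel $C$ of the localization map
\[
H^1(G^{T_k}_{S,\mathbf m},\F_p)\longrightarrow\prod_{\gp\in S}H^1_{\le m_\gp}(k_\gp,\F_p)/H^1_{\le\nu_\gp}(k_\gp,\F_p).
\]
Here $H^1_{\le m}$ denotes local characters of depth at most $m$, and $\mathbf m\ge\nu$ is large enough to contain all discrepancies. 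The kernel of this map is $H^1(G^{T_k}_{S,\nu},\F_p)$. Comparing the local unit-group terms of \cref{thm:generator rank} at depths $\mathbf m$ and $\nu$ then gives $\dim C=d_p(\Delta^{T_k}_{S,\nu})-d_p(\Delta^{T_k}_{S,\mathbf m})$. For $\mathbf m\gg0$ this equals $d_p(\Delta^{T_k}_{S,\nu})-d_p(\Delta^{T_k}_S)$, because these finite-dimensional spaces decrease with intersection $\Delta^{T_k}_S$.

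It follows that you cannot twist every $Y_i$. The discrepancies define a linear map $V\to C$. This map is well defined, and its vanishing is exactly the condition that a twist of depth $\le\nu_L$ exists. The reason is the Herbrand converse of \cref{lem:base-change-depth}: a character of $G_{k_\gp}$ whose restriction to $G_{L_\gP}$ has depth at most $\nu_{L,\gP}$ already has depth at most $\nu_\gp$. So only classes in the kernel can be twisted, and a basis of the kernel has at least $n-\dim C$ elements.

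Your $M=N\cdot(Lk^{T_k}_{S,\nu,\mathrm{ab}})$ also contains the untwisted $N$, so its depth over $L$ is unbounded. It must instead be built from the compositum $N'$ of the twisted solutions. These are still proper solutions of linearly independent nonsplit problems, so \cref{Marked solution} gives $d_p(\Gal(N'/k))=d_p(G)$, and your count with $E^{T_k}_{S,\nu}$ then goes through.

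In (ii), the removed rank is not bounded by $\sum_\gp d_p(I^{\mathrm{ab}}_\gp)$. What you need is that the inertia group of $M_{\mathfrak Q}/L_{\mathfrak Q}$ has rank at most $ef_{\mathfrak q}(\nu_{\mathfrak q}-1-\floor{(\nu_{\mathfrak q}-1)/p})-d_p(I^{\mathrm{ab}}_{\mathfrak q}/(I^{\mathrm{ab}}_{\mathfrak q})^p)$. This requires $M_{\mathfrak Q}\subseteq L_{\mathfrak Q}F$ with $F/k_{\mathfrak q}$ abelian of depth $\le\nu_{\mathfrak q}$ and with elementary abelian inertia. Then \cref{lem:inertia-rank-bound} applies to $L^{\mathrm{ele}}_{\mathfrak Q}F$, which plays the role of the paper's $M^{0,r}_{\mathfrak Q}$. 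In your set-up this holds automatically, since twisted local solutions are unramified local solutions times characters of depth $\le\nu_{\mathfrak q}$. But it is the one point where local structure enters, and it must be stated.

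With these repairs your argument is a valid alternative. A single cokernel computation replaces the paper's iteration, and the local abelian structure needed in (ii) comes for free.
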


\begin{proof}
\begin{enumerate}

   \item   \medskip
\noindent\textbf{ Step 1. Add the elementary abelian extensions of $k$ at the highest ramification depth of $N/L$.}
   
Let  $ S_L=
        \{\mathfrak P\in \mathbb{P}_L:\mathfrak P\mid \mathfrak p
        \text{ for some }\mathfrak p\in S\}.$ For \(\mathfrak P\in S_L\), let $\nu_{L,\mathfrak P}$ be the lifted ramification bound.  Suppose that the ramification depth of \(N/L\) exceeds \(\nu_L\) at some place \(\mathfrak Q\in S_L\). Put $\mathfrak q=\mathfrak Q\cap k.$ Let $L_{\mathfrak Q}^{\nr}$ be the maximal unramified subextension of \(L_{\mathfrak Q}/k_{\mathfrak q}\). Since the elementary abelian extension \(N/L\) comes from unramified cohomology, and the local unramified cohomology group $H^2_{\mathrm{nr}}(G_\mathfrak q,\Z/p\Z)$ is the image of $H^2(G_\mathfrak q/I_\mathfrak q,\Z/p\Z)$ under the inflation map. Hence, by \Cref{prop:central extension} and \cite[Exercise~6.6.4]{Weibel}, we obtain the following commutative diagram:
\begin{equation}\label{Diagram:inflation}
\begin{tikzcd}[ampersand replacement=\&,cramped]
	1 \& {\mathrm{Gal}(N_{\mathfrak Q}/L_{\mathfrak Q})} \& {\mathrm{Gal}(N_{\mathfrak Q}/k_{\mathfrak q})} \& {G_{\mathfrak q}} \& 1 \\
	1 \& {(\mathbb Z/p\mathbb Z)^{r_{\mathfrak Q}}} \& {\mathrm{Gal}(N^0_{\mathfrak Q}/k_{\mathfrak q})} \& {G_{\mathfrak q}/I_{\mathfrak q}} \& {1,}
	\arrow[from=1-1, to=1-2]
	\arrow[from=1-2, to=1-3]
	\arrow["\sim"', from=1-2, to=2-2]
	\arrow[from=1-3, to=1-4]
	\arrow[from=1-3, to=2-3]
	\arrow[from=1-4, to=1-5]
	\arrow[two heads, from=1-4, to=2-4]
	\arrow[from=2-1, to=2-2]
	\arrow[from=2-2, to=2-3]
	\arrow[from=2-3, to=2-4]
	\arrow[from=2-4, to=2-5]
\end{tikzcd}
\end{equation}
where \(
r_{\mathfrak Q}
=\dim_{\mathbb F_p}
\Gal(N_{\mathfrak Q}/L_{\mathfrak Q}).
\) Since $G_{\mathfrak q}/I_{\mathfrak q}$ is cyclic and the lower extension is central, $ \Gal(N^0_{\mathfrak Q}/k_{\mathfrak q})$ is abelian.  Consequently, $N_{\mathfrak Q}/L_{\mathfrak Q}$ is obtained by base change from the elementary abelian extension $N^0_{\mathfrak Q}/L_{\mathfrak Q}^{\nr}$, as illustrated in \cref{fig:truncation-field-diagram}.  Let $ n_{\mathfrak q}$ be the ramification depth of $N^0_{\mathfrak Q}/L_{\mathfrak Q}^{\nr}.$ Then $n_{\mathfrak q}-1$ is the highest upper ramification jump. Since $L_{\mathfrak Q}^{\nr}/k_{\mathfrak q}$ is unramified, thus  \(n_{\mathfrak q}-1\) is also the highest upper ramification jump over \(N^0_{\mathfrak Q}/k_{\mathfrak q}\) \cref{lem:base-change-depth} . Let $\mathbf n=(n_{\mathfrak p})_{\mathfrak p\in S}$ be the depth vector, and define
\[
        \mathbf n^{-}=(n^-_{\mathfrak p})_{\mathfrak p\in S}\text{ where  }   n^-_{\mathfrak p}
        =
        \begin{cases}
        n_{\mathfrak p}, & \mathfrak p\neq \mathfrak q,\\
        n_{\mathfrak q}-1, & \mathfrak p=\mathfrak q.
        \end{cases}
\]
Let \(
V_{\mathbf n}
=H^1(G_{S,\mathbf n}^{T_k},\mathbb F_p),
V_{\mathbf n^-}
=H^1(G_{S,\mathbf n^-}^{T_k},\mathbb F_p).
\) Choose an $\mathbb F_p$-complement $W$ of $V_{\mathbf n^-}$ in $V_{\mathbf n}$, and let $E_{\mathbf n,\mathbf n^-}/k$ be the elementary abelian $p$-extension corresponding to $W$. Every nonzero character in $W$ has ramification depth exactly $n_{\mathfrak q}$ at $\mathfrak q$. Consequently,
\[
\begin{aligned}
d_p\Gal(E_{\mathbf n,\mathbf n^-}/k)
=\dim_{\mathbb F_p}W
=d_p(G_{S,\mathbf n}^{T_k})
-d_p(G_{S,\mathbf n^-}^{T_k}).
\end{aligned}
\]
By \cref{Marked solution}, $d_p\Gal(N/k)=d_p(G)$. Hence every elementary abelian subextension of $N/k$ is contained in $L$ and has ramification depth bounded by $\nu\leq\mathbf n^-$. The equality $W\cap V_{\mathbf n^-}=0$ gives \(
N\cap E_{\mathbf n,\mathbf n^-}=k.
\) Put $M_1=NE_{\mathbf n,\mathbf n^-}$. It follows that $N\cap E_{\mathbf n,\mathbf n^-}L=L$. Therefore,
\[
\begin{aligned}
d_p\Gal(M_1/L)
&=d_p\Gal(N/L)
+d_p(G_{S,\mathbf n}^{T_k})
-d_p(G_{S,\mathbf n^-}^{T_k}),
\end{aligned}
\]
as in \cite[proof of Theorem 8.2]{Kuhnt}. Moreover, write \(
M_{1,\mathfrak Q}
=M^0_{1,\mathfrak Q}L_{\mathfrak Q},
\) where $M^0_{1,\mathfrak Q}/k_{\mathfrak q}$ is abelian with elementary abelian inertia group. Indeed, let $E_{\mathfrak q}$ denote the completion of $E_{\mathbf n,\mathbf n^-}$ inside $M_{1,\mathfrak Q}$. Put \(
M^0_{1,\mathfrak Q}
=N^0_{\mathfrak Q}E_{\mathfrak q}.
\) Then \(
M_{1,\mathfrak Q}
=M^0_{1,\mathfrak Q}L_{\mathfrak Q}.
\) Since $E_{\mathfrak q}/k_{\mathfrak q}$ is elementary abelian, $M^0_{1,\mathfrak Q}/k_{\mathfrak q}$ is abelian, and $M^0_{1,\mathfrak Q}/L_{\mathfrak Q}^{\nr}$ is elementary abelian, see in \Cref{fig:truncation-field-diagram}.

\medskip
\noindent\textbf{Step 2. Remove the highest ramification layer of $M_1/L$.}
Let
\[
H=\Gal(M^0_{1,\mathfrak Q}/L_{\mathfrak Q}^{\nr}),
\]
and let $H^{n_{\mathfrak q}-1}$ be its highest nontrivial upper ramification group. Since the ramification depth of $L_{\mathfrak Q}/k_{\mathfrak q}$ is less than $n_{\mathfrak q}$, the subgroup $H^{n_{\mathfrak q}-1}$ fixes $M^0_{1,\mathfrak Q}\cap L_{\mathfrak Q}$. Thus it identifies with a subgroup
\[
H_0\subseteq\Gal(M_1/L).
\]
Moreover the restriction of $H_0$ to $\Gal(E_{\mathbf n,\mathbf n^-}/k)$ is surjective. Indeed, its annihilator in \(
\Hom(\Gal(E_{\mathbf n,\mathbf n^-}/k),\mathbb F_p)
\) consists of characters trivial on the highest upper ramification group at $\mathfrak q$. These characters have ramification depth at most $n_{\mathfrak q}-1$, hence belong to $\Hom(\Gal(E_{\mathbf n,\mathbf n^-}/k),\mathbb F_p)\cap V_{\mathbf n^-}=0$. Therefore the annihilator is trivial, and the restriction map is surjective. By the local ramification estimate \cite[Chapter IV, Proposition 7]{Serrelocal} or \cite[Theorem 5.23]{Kuhnt}, we have
\[
d_p(H_0)\leq ef_{\mathfrak q},
\]
where $f_{\mathfrak q}=\deg\mathfrak q$. Since $\Gal(M_1/L)$ is central in $\Gal(M_1/k)$, the subgroup $H_0$ is normal in $\Gal(M_1/k)$. Put $M_2=M_1^{H_0}$. Then
\[
\begin{aligned}
d_p\Gal(M_2/L)
&=d_p\Gal(M_1/L)-d_p(H_0)\\
&\geq d_p\Gal(N/L)
+d_p(G_{S,\mathbf n}^{T_k})
-d_p(G_{S,\mathbf n^-}^{T_k})
-d_p(H_0).
\end{aligned}
\]
We claim that
\[
d_p\Gal(M_2/k)=d_p(G).
\]
Let $N^{\mathrm{ele}}/k$ be the maximal elementary abelian subextension of $N/k$. Since \(
\Gal(N/L)\subseteq\Phi(\Gal(N/k)),
\) we have $N^{\mathrm{ele}}\subseteq L$. Moreover, since $N\cap E_{\mathbf n,\mathbf n^-}=k$, the extension $N^{\mathrm{ele}}E_{\mathbf n,\mathbf n^-}/k$ is the maximal elementary abelian subextension of $M_1/k$. The relevant field extensions can be seen in \cref{fig:generator-rank-truncation}.

\begin{figure}[ht]
\centering
\[
\begin{tikzcd}[ampersand replacement=\&,cramped]
	\&\& {M_1} \&\& \\
	\\
	{M_2} \&\& N \&\& {LE_{\mathbf n,\mathbf n^-}} \\
	\&\& L \&\& {N^{\mathrm{ele}}E_{\mathbf n,\mathbf n^-}} \\
	\&\& {N^{\mathrm{ele}}} \&\& {E_{\mathbf n,\mathbf n^-}} \\
	\&\& k
	\arrow["{H_0}", no head, from=3-1, to=1-3]
	\arrow[no head, from=3-1, to=4-3]
	\arrow[no head, from=3-3, to=1-3]
	\arrow[no head, from=3-3, to=4-3]
	\arrow[no head, from=3-5, to=1-3]
	\arrow[no head, from=4-3, to=3-5]
	\arrow[no head, from=4-3, to=5-3]
	\arrow[no head, from=4-5, to=3-5]
	\arrow[no head, from=5-3, to=4-5]
	\arrow[no head, from=5-3, to=6-3]
	\arrow[no head, from=5-5, to=4-5]
	\arrow[no head, from=6-3, to=5-5]
\end{tikzcd}
\]
\caption{The global field diagram for \cref{prop:Truncation of high ramification}.}
\label{fig:generator-rank-truncation}
\end{figure}

Since $H_0\subseteq\Gal(M_1/L)$ restricts surjectively to $\Gal(E_{\mathbf n,\mathbf n^-}/k)$, Galois correspondence gives \(
M_2\cap LE_{\mathbf n,\mathbf n^-}=L.
\) Consequently, \(
M_2\cap N^{\mathrm{ele}}E_{\mathbf n,\mathbf n^-}
=N^{\mathrm{ele}}.
\) Thus $N^{\mathrm{ele}}/k$ is also the maximal elementary abelian subextension of $M_2/k$. It follows that
\[
d_p\Gal(M_2/k)
=d_p\Gal(N/k)
=d_p(G).
\]
Finally, since $M_{1,\mathfrak Q}=M^0_{1,\mathfrak Q}L_{\mathfrak Q}$, we have \(
M_{2,\mathfrak Q}
=M_{1,\mathfrak Q}^{H_0}
=L_{\mathfrak Q}
\bigl(M^0_{1,\mathfrak Q}\bigr)^{
H_0|_{M^0_{1,\mathfrak Q}}}.
\) Put \(
M^0_{2,\mathfrak Q}
=
\bigl(M^0_{1,\mathfrak Q}\bigr)^{
H_0|_{M^0_{1,\mathfrak Q}}}.
\) Then \(
M_{2,\mathfrak Q}
=M^0_{2,\mathfrak Q}L_{\mathfrak Q},
\) where $M^0_{2,\mathfrak Q}/k_{\mathfrak q}$ is abelian with elementary abelian inertia group.  Since \(N^0_{\mathfrak Q}/L_{\mathfrak Q}^{\nr}\) is an elementary abelian extension, its upper ramification jumps are prime to \(p\) by \cref{lem:jumps-prime-to-p}. Hence, $ p\nmid (n_{\mathfrak q}-1) \text{ and  }     \left\lfloor
        \frac{n_{\mathfrak q}-1}{p}\right\rfloor
        =\left\lfloor\frac{n_{\mathfrak q}-2}{p}\right\rfloor.$   By \cref{thm:generator rank}, it follows that
\[d_p(\Gal(M_2/L))
    \geq d_p(\Gal(N/L))+
             d_p(\Delta_{S,\mathbf n}^{T_k})
        -d_p(\Delta_{S,\mathbf n^{-}}^{T_k}).\]
By construction, $M_2$ has a strictly lower ramification depth than $N$ at $\mathfrak Q$, while the ramification depths at all other places do not increase.

\medskip
\noindent\textbf{Step 3. Remove ramification of $N/L$ exceeding $\nu_L$ and add back elementary abelian extensions of $k$ with ramification depth bounded by $\nu$.}
Repeating this process with $M_2$ in place of $N$, until all ramification exceeding the bound $\nu_L$ has been removed, we obtain an extension $N_1/L$. Using the filtration
\[
\Delta_S^{T_k}
\subseteq\cdots\subseteq
\Delta_{S,\mathbf n}^{T_k}
\subseteq\Delta_{S,\mathbf n^-}^{T_k}
\subseteq\cdots\subseteq
\Delta_{S,\nu}^{T_k},
\]
the preceding inequalities to give
\[
d_p\Gal(N_1/L)
\geq d_p\Gal(N/L)
-\bigl(
d_p(\Delta_{S,\nu}^{T_k})
-d_p(\Delta_S^{T_k})
\bigr).
\]
Moreover $d_p(N_1/k)=d_p(G)$ by the above claim.  Let $E_{S,\nu}^{T_k}/k$ be the compositum of all elementary abelian $p$-extensions of $k$ with ramification depth bounded by $\nu$ in which all places of $T_k$ split completely. Put \(
M=N_1E_{S,\nu}^{T_k}.
\) Since $N_1\cap E_{S,\nu}^{T_k}/k$ is elementary abelian, we have
\[
d_p\Gal(N_1\cap E_{S,\nu}^{T_k}/k)
\leq d_p\Gal(N_1/k)=d_p(G).
\]
Consequently,
\[
\begin{aligned}
d_p\Gal(M/L)
&=d_p\Gal(N_1/L)
  +d_p\Gal(E_{S,\nu}^{T_k}N_1/N_1)\\
&=d_p\Gal(N_1/L)
  +d_p\Gal(E_{S,\nu}^{T_k}/k)
-d_p\Gal(N_1\cap E_{S,\nu}^{T_k}/k)\\
&\geq d_p\Gal(N_1/L)
  +d_p(G_{S,\nu}^{T_k})-d_p(G)\\
&\geq d_p\Gal(N/L)
  +d_p(\Delta_S^{T_k})
  -d_p(\Delta_{S,\nu}^{T_k})
+d_p(G_{S,\nu}^{T_k})-d_p(G).
\end{aligned}
\]
Finally, the ramification of \(M/L\) is still bounded by \(\nu_L\) by \cite[Lemma 2.3.7]{HXrationalbook}. By our construction, $M/L$ is central over $L/k$.

\begin{figure}[ht]

\centering
\[
\begin{tikzcd}
    &&&&& {M_{1,\mathfrak Q}} \\
    &&&& {N_{\mathfrak Q}} \\
    &&& {L_\mathfrak Q} \\
    && {L_\mathfrak Q^{ab}} \\
    && {L_\mathfrak Q^{\mathrm{ele}}M_\mathfrak Q^0} \\
    & {M_\mathfrak Q^{0,r}} &&&& {M^0_{1,\mathfrak Q}} \\
    {L_\mathfrak Q^{\mathrm{ele}}} && {L_\mathfrak Q^{\mathrm{nr}}M_\mathfrak Q^0} && {N^0_{\mathfrak Q}} \\
    &&& {L_\mathfrak Q^{\mathrm{nr}}} \\
    &&& {k_{\mathfrak{q}}}
    \arrow["{ele.ab}", no head, from=2-5, to=1-6]
    \arrow["{ele.ab}", no head, from=3-4, to=2-5]
    \arrow[no head, from=4-3, to=3-4]
    \arrow["unr"', no head, from=6-2, to=5-3]
    \arrow[no head, from=6-6, to=1-6]
    \arrow[bend left=16, no head, from=7-1, to=4-3]
    \arrow["{tot.ram}"', no head, from=7-1, to=6-2]
    \arrow[no head, from=7-3, to=5-3]
    \arrow[no head, from=7-5, to=2-5]
    \arrow["{ele.ab}"', no head, from=7-5, to=6-6]
    \arrow[draw=none, from=8-4, to=3-4]
    \arrow[no head, from=8-4, to=3-4]
    \arrow["{ele.ab}", no head, from=8-4, to=7-1]
    \arrow["{ele.ab}"', no head, from=8-4, to=7-3]
    \arrow["{ele.ab}"', no head, from=8-4, to=7-5]
    \arrow[no head, from=9-4, to=8-4]
\end{tikzcd}
\]
{\setlength{\abovecaptionskip}{1pt}%
\caption[Field diagram for \cref{prop:Truncation of high ramification}]
{The local field diagram for \cref{prop:Truncation of high ramification}.}
\label{fig:truncation-field-diagram}
}

\end{figure}

\item The extension $M/L$ has bounded depth, but it may still be ramified over $L$.  We now remove this ramification and estimate the rank lost in the process. For $\mathfrak q\in S$, we fix a place $\mathfrak Q$ above $q$ in $L$. Let $L_\mathfrak Q^{\mathrm{ab}}/k_\mathfrak q$ be the maximal abelian subextension of the local extension $L_\mathfrak Q/k_\mathfrak q$. Notice that $L_\mathfrak Q^{\mathrm{nr}}$ is also the maximal unramified subextension of $L_\mathfrak Q^{ab}/k_\mathfrak q.$ Let $I_\mathfrak q^{\mathrm{ab}}$ be its inertia group and let $L_\mathfrak Q^{ele}$ be the extension of $L_\mathfrak Q^{\mathrm{nr}}$ corresponding to the elementary abelian inertia quotient $I_\mathfrak q^{\mathrm{ab}}/(I_\mathfrak q^{\mathrm{ab}})^p$. By the construction of $M$ and the local field $M_\mathfrak Q^0,$ one sees that $M_\mathfrak Q^0L_\mathfrak Q^{\mathrm{nr}}/L_\mathfrak Q^{\mathrm{nr}}$ is an elementary abelian extension. Then $M_\mathfrak Q^0 L_\mathfrak Q^{ele}/L_\mathfrak Q^{\mathrm{nr}}$ is also an elementary abelian extension. Since \(L_{\mathfrak Q}^{\mathrm{ele}}M_{\mathfrak Q}^0/L_{\mathfrak Q}^{\mathrm{ele}}\) is elementary abelian, we may choose a complement to its inertia subgroup. The fixed field \(M_{\mathfrak Q}^{0,r}\) of this complement is then maximal totally ramified over \(L_{\mathfrak Q}^{\mathrm{ele}}\). See \cref{fig:truncation-field-diagram} for the corresponding field extension diagram. Since the ramification depth of $L_{\mathfrak Q}/k_{\mathfrak q}$ is at most $\nu_{\mathfrak q}$, the quotient formula gives
\[
(\Gal(L_{\mathfrak Q}/k_{\mathfrak q}))^{\nu_{\mathfrak q}}
=(\Gal(M_{\mathfrak Q}/k_{\mathfrak q}))^{\nu_{\mathfrak q}}\Gal(M_{\mathfrak Q}/L_{\mathfrak Q})/\Gal(M_{\mathfrak Q}/L_{\mathfrak Q})=1.
\]
Thus $(\Gal(M_{\mathfrak Q}/k_{\mathfrak q}))^{\nu_{\mathfrak q}}\subseteq \Gal(M_{\mathfrak Q}/L_{\mathfrak Q})$. By the subgroup formula and the ramification bound on $M_{\mathfrak Q}/L_{\mathfrak Q}$, we obtain
\[
\begin{aligned}
\Gal(M_{\mathfrak Q}/k_{\mathfrak q})^{\nu_{\mathfrak q}}
=\Gal(M_{\mathfrak Q}/k_{\mathfrak q})^{\nu_{\mathfrak q}}
\cap \Gal(M_{\mathfrak Q}/L_{\mathfrak Q})
=\Gal(M_{\mathfrak Q}/L_{\mathfrak Q})^{
\psi_{L_{\mathfrak Q}/k_{\mathfrak q}}(\nu_{\mathfrak q})}
=1.
\end{aligned}
\]
Thus the quotient formula yields
\[
\begin{aligned}
\Gal(M_{\mathfrak Q}^{0,r}/k_{\mathfrak q})
^{\nu_{\mathfrak q}}
=
(\Gal(M_{\mathfrak Q}/k_{\mathfrak q}))^{\nu_{\mathfrak q}}\Gal(M_{\mathfrak Q}/M_{\mathfrak Q}^{0,r})/\Gal(M_{\mathfrak Q}/M_{\mathfrak Q}^{0,r})
=1.
\end{aligned}
\]
Hence $M_{\mathfrak Q}^{0,r}/k_{\mathfrak q}$ has ramification depth at most $\nu_{\mathfrak q}$. It is also abelian with elementary abelian inertia group $\Gal(M_\mathfrak Q^{0,r}/L_\mathfrak Q^{\mathrm{nr}})$. By \cref{lem:inertia-rank-bound}, we obtain
\begin{equation}
\label{eq:local-inertia-rank-step3}
   d_p(\Gal(M_\mathfrak Q^{0,r}/L_\mathfrak Q^{\mathrm{nr}}))
   \le
   e f_\mathfrak q\bigl(\nu_\mathfrak q-1-\floor{(\nu_\mathfrak q-1)/p}\bigr).
\end{equation}
Moreover, we have
\begin{equation}
\label{eq:local-drop-step3}
   d_p(\Gal(M_\mathfrak Q^{0,r}/L_\mathfrak Q^{ele}))
   =d_p(\Gal(M_\mathfrak Q^{0,r}/L_\mathfrak Q^{\mathrm{nr}}))-d_p(I_q^{\mathrm{ab}}/(I_q^{\mathrm{ab}})^p).
\end{equation}
By the definition of $S_{\mathrm{nr}}$ in \cref{sec:relation-detailed}, we have
\begin{equation}
\label{eq:Ip-p-rank}
   d_p(I_\mathfrak q^{\mathrm{ab}}/(I_\mathfrak q^{\mathrm{ab}})^p)
   =
   \begin{cases}
      d_p(G_\mathfrak q)-1, & \text{ if }\mathfrak q\in S_{\mathrm{nr}};\\
      d_p(G_\mathfrak q),   & \text{ if } \mathfrak q\in S\setminus S_{\mathrm{nr}}.
   \end{cases}
\end{equation}
For each $\mathfrak q\in S$, choose a place $\mathfrak Q$ above $\mathfrak q$, and let $I_{\mathfrak Q}$ be the inertia group of $M_{\mathfrak Q}/L_{\mathfrak Q}$. By the construction of $M_{\mathfrak Q}^{0,r}$,we have
\[
d_p(I_{\mathfrak Q})
\le
d_p\Gal(M_{\mathfrak Q}^{0,r}/
L_{\mathfrak Q}^{\mathrm{ele}}).
\]
Since $\Gal(M/L)$ is central in $\Gal(M/k)$, the inertia groups above each $\mathfrak q\in S$ coincide. Let $J=\langle I_{\mathfrak Q}:\mathfrak q\in S\rangle$ and $\tilde L=M^J.$ Then $\tilde L/L$ is unramified. Since $M/L$ is elementary abelian, combining Equations \eqref{eq:local-inertia-rank-step3}, \eqref{eq:local-drop-step3}, and \eqref{eq:Ip-p-rank}, we obtain
\begin{align*}
   d_p(\Gal(\tilde{L}/L))
   &=d_p\Gal(M/L)-d_p(J) \ge d_p\Gal(M/L)
-\sum_{\mathfrak q\in S}d_p(I_{\mathfrak Q})\\
   &\ge d_p(\Gal(M/L))
      -\sum_{\mathfrak q\in S}
       d_p(\Gal(M_\mathfrak Q^{0,r}/L_\mathfrak Q^{ele})) \notag\\
   &\ge d_p(\Gal(M/L))
      -\sum_{\mathfrak q\in S} e f_\mathfrak q\bigl(\nu_\mathfrak q-1-\floor{(\nu_\mathfrak q-1)/p}\bigr)
      +\sum_{\mathfrak q\in S}d_p(I_\mathfrak q^{\mathrm{ab}}/(I_\mathfrak q^{\mathrm{ab}})^p)\\
        &\ge   d_p(\Gal(N/L))
        +d_p(\Delta_S^{T_k})
        +1-|{T_k}|
        -d_p(G)
      +\sum_{\mathfrak q\in S} d_p(G_\mathfrak q)-|S_{nr}|.
\label{eq:N0-initial}
\end{align*}

\end{enumerate}
\end{proof}

Now we can give the complete proof of \cref{thm:kuhnt}.

\begin{proof}[Proof of \cref{thm:kuhnt}]
Assume that $L/k$ is finite.  We show that if $\delta_N>0$, then there is a further nontrivial unramified $p$-extension of $L$ in which the places of $T_L$ split completely.  Hence, if $L$ is maximal with that property, then $\delta_N\le 0$. 

Consider the map as follows:
\[
\rho:
H^2(G,\Z/p\Z)
\longrightarrow
\prod_{\gp\in S\setminus S_{\nr}}
H^2(G_\gp,\Z/p\Z)
\times
\prod_{\gp\in S_{\nr}}
\frac{H^2(G_\gp,\Z/p\Z)}
{H^2_{\nr}(G_\gp,\Z/p\Z)},
\]
for each \(\gp\in S\setminus S_{\nr}\), the corresponding component of \(\rho\) is the restriction map \(\res^G_{G_\gp},\) and for \(\gp\in S_{\nr}\), it is the composition of \(\res^G_{G_\gp}\) with the canonical projection onto \(H^2(G_\gp,\Z/p\Z)/H^2_{\nr}(G_\gp,\Z/p\Z)\). By \cref{prop:local-hnr}, the kernel of $\rho$ is $H^2_{\nr}(G,\Z/p\Z)$.  In \cref{sec:relation-detailed}, we give the induced filtration of $H^2_{nr}(G_{\mathfrak{p}},\Z/p\Z)$  by  $\operatorname{Fil}_NH^2_{nr}(G_{\mathfrak{p}},\Z/p\Z).$ The similar filtration can be induced in the global case by
\[
        \operatorname{Fil}_NH^2_{nr}(G,\Z/p\Z):=H^2_{nr}(G,\Z/p\Z)\cap \operatorname{Fil}_NH^2_{}(G,\Z/p\Z).
\]
The quotient filtration of \(H^2(G_{\mathfrak{p}},\Z/p\Z)/H^2_{nr}(G_{\mathfrak{p}},\Z/p\Z)\) can be given by
\[
        \operatorname{Fil}_N(H^2(G_{\mathfrak{p}},\Z/p\Z)/H^2_{nr}(G_{\mathfrak{p}},\Z/p\Z))
        :=
        (\operatorname{Fil}_NH^2(G_{\mathfrak{p}},\Z/p\Z)+H^2_{nr}(G_{\mathfrak{p}},\Z/p\Z))/H^2_{nr}(G_{\mathfrak{p}},\Z/p\Z).
\]
The isomorphism theorem gives the following equation $$\dim_{\Fp}\operatorname{Fil}_N(H^2(G_{\mathfrak{p}},\Z/p\Z)/H^2_{nr}(G_{\mathfrak{p}},\Z/p\Z))=\dim_{\Fp} \operatorname{Fil}_NH^2(G_{\mathfrak{p}},\Z/p\Z)-  \dim_{\Fp}\operatorname{Fil}_NH^2_{nr}(G_{\mathfrak{p}},\Z/p\Z).$$
Since the restriction maps are filtered by  \cref{Zassenhaus map}, then \(\rho\) can be filtered and
\[\rho(\operatorname{Fil}_NH^2(G,\Z/p\Z))
        \subseteq
        \prod_{\mathfrak p\in S\setminus S_{\nr}}\operatorname{Fil}_NH^2(G_{\mathfrak{p}},\Z/p\Z)
        \times
        \prod_{\mathfrak p\in S_{\nr}} \operatorname{Fil}_N(H^2(G_{\mathfrak{p}},\Z/p\Z)/H^2_{nr}(G_{\mathfrak{p}},\Z/p\Z)).\]
Denoted the above map by \(\rho_{fil}\). One can check that  $$\operatorname{Fil}_NH^2_{nr}(G,\Z/p\Z)=\ker(\rho_{fil}).$$
Now use the dimension formulas in \cref{prop:fildual}, we have
\[
\dim_{\Fp}\operatorname{Fil}_NH^2(G,\Z/p\Z)
=
\sum_{n=2}^{N}s_n(G)\quad \text{ and } \quad \dim_{\Fp}\operatorname{Fil}_NH^2(G_{\mathfrak{p}},\Z/p\Z)
=
\sum_{n=2}^{N}s_n(G_{\mathfrak{p}}).
\]
Moreover, since $\dim_{\Fp}\operatorname{Fil}_NH^2_{nr}(G_{\mathfrak{p}},\Z/p\Z)=\sum_{n=2}^{N}u_n(G_{\mathfrak{p}})\leq \sum_{n=2}^{N}s_n(G_{\mathfrak{p}})$, we get
$$\dim_{\F_p}\operatorname{Fil}_NH^2_{nr}(G,\Z/p\Z)\geq
\sum_{n=2}^{N}s_n(G)
-\sum_{\mathfrak p\in S\setminus S_{\nr}}
\sum_{n=2}^{N}s_n(G_{\mathfrak{p}})
-
\sum_{\mathfrak p\in S_{\nr}}\sum_{n=2}^{N}[s_n(G_{\mathfrak{p}})-u_n(G_{\mathfrak{p}})]
$$
by considering the filtered map $\rho_{fil}$.

Choose a basis $\epsilon_1,\ldots,\epsilon_n$ of $\operatorname{Fil}_NH^2_{nr}(G,\Z/p\Z)$.  By \cref{prop:global-embedding-solvable}, each $\epsilon_i$ gives a proper global solution $M_i/k$ with kernel $\Z/p\Z$.  Let $Y=Y_1\cdots Y_n.$
The $G$-action on the kernel $\Z/p\Z$ is trivial, so $\Gal(Y/L)$ is central in $\Gal(Y/k)$.  By \cref{Marked solution}, we have
\begin{equation*}
   d_p(\Gal(Y/L))=\dim_{\F_p}\operatorname{Fil}_NH^2_{nr}(G,\Z/p\Z).
\end{equation*}
Thus, we obtain
\begin{equation*}
   d_p(\Gal(Y/L))\ge \sum_{n=2}^{N}s_n(G)
-\sum_{\mathfrak p\in S\setminus S_{\nr}}
\sum_{n=2}^{N}s_n(G_{\mathfrak{p}})
-
\sum_{\mathfrak p\in S_{\nr}}\sum_{n=2}^{N}(s_n(G_{\mathfrak{p}})-u_n(G_{\mathfrak{p}})).
\end{equation*}
By \cref{prop:Truncation of high ramification}, there exists an extension $\tilde{L}/L$ such that
\begin{align*}
   d_p(\Gal(\tilde{L}/L))
&\ge   \sum_{n=2}^{N}s_n(G)-d_p(G)
-\sum_{\mathfrak p\in S\setminus S_{\nr}}
\left(\sum_{n=2}^{N}s_n(G_{\mathfrak{p}})-d_p(G_{\mathfrak{p}})\right)\\
 &\qquad-\sum_{\gp\in S_{\mathrm{nr}}}\left(\sum_{n=2}^{N}(s_n(G_{\mathfrak{p}})-u_n(G_{\mathfrak{p}})) -d_p(G_\mathfrak{p})+1\right)
         +d_p(\Delta_S^{T_k})+1-|{T_k}|.
\end{align*}
\end{proof}

\section{Ray class fields and the tower criterion}\label{sec:5}
In this section, we provide a complete proof of the main tower criterion \cref{thm:maincriterion}.

\subsection{Local relation module contribution}
 In the sense of \cref{thm:kuhnt}, for each \(\mathfrak p\in S\), we need to bound the local contributions
\[
\sum_{n=2}^{N}s_n(G_{\mathfrak p})-d_p(G_{\mathfrak p}) \text{ and }\sum_{n=2}^{N}(s_n(G_{\mathfrak{p}})-u_n(G_{\mathfrak{p}})) -d_p(G_\mathfrak{p})+1.
\]
We begin by introducing some notations. For \(A=G_{\mathfrak p}\) or \(I_{\mathfrak p}\), let \(s_n(A)\) denote the number of independent defining relations of Zassenhaus degree \(n\) in the standard minimal pro-\(p\) presentation of the finite abelian \(p\)-group \(A\), as defined in \cref{Defining relation}. By \cref{prop:local-hnr}, the dimension of the \(n\)-th graded piece of the unramified local cohomology is \(u_n(G_{\mathfrak p})\). Now we explain how to bound the local contribution for each \(\mathfrak p\in S\). 

\begin{proposition}\label{prop:local-relation}
Under the notation of \cref{sec:relation-detailed}, let $\gp\in S$ and assume that $G_{\mathfrak p}$ is a finite abelian $p$-group, with inertia subgroup $I_{\mathfrak p}$. Then for every $N\ge2$, we have
\[
\sum_{n=2}^{N}s_n(G_{\mathfrak p})
-d_p(G_{\mathfrak p})
\le
\sum_{n=2}^{N}s_n(I_{\mathfrak p}).
\]
Moreover, if $G_{\mathfrak p}$ is nontrivial and $I_{\mathfrak p}$ is an elementary abelian $p$-group, then
\[
\sum_{n=2}^{N}
(s_n(G_{\mathfrak p})-u_n(G_{\mathfrak p}))
-d_p(G_{\mathfrak p})+1
\le
\sum_{n=2}^{N}s_n(I_{\mathfrak p}).
\]
\end{proposition}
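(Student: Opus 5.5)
The plan is to reduce everything to the explicit count in \cref{prop:abelian-standard-relations}, applied separately to $G_{\mathfrak p}$ and to $I_{\mathfrak p}$. Write $G_{\mathfrak p}\simeq\bigoplus_{i=1}^{d}C_{p^{a_i}}$ with $d=d_p(G_{\mathfrak p})$, and $I_{\mathfrak p}\simeq\bigoplus_{j=1}^{d'}C_{p^{b_j}}$ with $d'=d_p(I_{\mathfrak p})$. Let $m=\lfloor\log_p N\rfloor$. Then \cref{prop:abelian-standard-relations} gives
\[
\sum_{n=2}^N s_n(G_{\mathfrak p})=\binom d2+\#\{i:a_i\le m\},\qquad
\sum_{n=2}^N s_n(I_{\mathfrak p})=\binom{d'}2+\#\{j:b_j\le m\}.
\]
It is convenient to rewrite $\#\{i:a_i\le m\}=d-d_p(p^mG_{\mathfrak p})$, and likewise $\#\{j:b_j\le m\}=d'-d_p(p^mI_{\mathfrak p})$. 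The group-theoretic input is that $G_{\mathfrak p}/I_{\mathfrak p}$ is cyclic, being the Galois group of an unramified local extension. Hence $d'\in\{d-1,d\}$, and we are in the first case exactly when $\mathfrak p\in S_{\mathrm{nr}}$.

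For the first inequality we use that $p^mI_{\mathfrak p}\subseteq p^mG_{\mathfrak p}$, so $d_p(p^mI_{\mathfrak p})\le d_p(p^mG_{\mathfrak p})$ by monotonicity of $p$-rank for subgroups of finite abelian $p$-groups. The required inequality becomes
\[
\binom d2-d_p(p^mG_{\mathfrak p})\le\binom{d'}2+d'-d_p(p^mI_{\mathfrak p}).
\]
If $d'=d$, this holds because $\binom d2\le\binom d2+d$ and the rank comparison goes the right way. If $d'=d-1$, then $\binom{d-1}2+(d-1)=\binom d2$, and again only the rank comparison is needed.

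For the second inequality, $I_{\mathfrak p}$ is elementary abelian, so $\sum_{n\le N}s_n(I_{\mathfrak p})=\binom{d'}2+d'\cdot\mathbf 1_{\{N\ge p\}}$. Set $U=\sum_{n\le N}u_n(G_{\mathfrak p})$. We split according to whether $\mathfrak p$ lies in $S_{\mathrm{nr}}$.

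\emph{Case $\mathfrak p\notin S_{\mathrm{nr}}$.} Here $U=0$ by \cref{prop:local-hnr}, and $d'=d$. We need $\#\{i:a_i\le m\}-d+1\le d\cdot\mathbf 1_{\{N\ge p\}}$. If $N<p$, the left side is $1-d\le0$, since $d\ge1$ because $G_{\mathfrak p}$ is nontrivial. If $N\ge p$, it reduces to $\#\le 2d-1$, which holds since $\#\le d$.

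\emph{Case $\mathfrak p\in S_{\mathrm{nr}}$.} Here $d'=d-1$. Using $\binom d2-d+1=\binom{d-1}2$, we need $\#\{i:a_i\le m\}-U\le(d-1)\mathbf 1_{\{N\ge p\}}$. This is automatic unless $N\ge p$ and every $a_i\le m$ holds with $\#=d$. In that remaining subcase $\#\{i:a_i\le m\}=d$ means $p^{a_i}\le N$ for every $i$, so \cref{prop:abelian-standard-relations} gives $\sum_{n\le N}s_n(G_{\mathfrak p})=r_p(G_{\mathfrak p})$. By \cref{prop:fildual} this says $\operatorname{Fil}_NH^2(G_{\mathfrak p},\Z/p\Z)=H^2(G_{\mathfrak p},\Z/p\Z)$. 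Consequently $\operatorname{Fil}_NH^2_{\mathrm{nr}}=H^2_{\mathrm{nr}}\simeq\Z/p\Z$, so $U=1$, which is exactly what is needed.

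I expect the main obstacle to be this last step: controlling the Zassenhaus degree of the unramified class. The plan is to sidestep any explicit identification of the inflated class, such as the Bockstein of $G_{\mathfrak p}/I_{\mathfrak p}$, by observing that in the only critical subcase the whole filtration on $H^2(G_{\mathfrak p})$ is already exhausted at level $N$. This makes the unramified class lie in $\operatorname{Fil}_N$ automatically.
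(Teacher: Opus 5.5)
Your proof is correct. The first inequality follows the paper's argument: the formula $\sum_{n\le N}s_n(A)=\binom{d_p(A)}{2}+d_p(A)-d_p(p^{\ell}A)$, the comparison $d_p(p^{\ell}I_{\mathfrak p})\le d_p(p^{\ell}G_{\mathfrak p})$, and $d_p(I_{\mathfrak p})\in\{d-1,d\}$. For the second inequality your route is genuinely different.

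The paper first writes down the structure of $G_{\mathfrak p}$ explicitly: it is $\Z/p^{a_{\mathfrak p}}\Z\oplus(\Z/p\Z)^h$ or $\Z/p^{a_{\mathfrak p}+1}\Z\oplus(\Z/p\Z)^{h-1}$. In the $S_{\mathrm{nr}}$ case it then identifies the inflated generator of $H^2(G_{\mathfrak p}/I_{\mathfrak p},\Z/p\Z)$ with the dual of the power relation $\sigma^{p^{a_{\mathfrak p}}}$. It does this through the central extension $\langle\widetilde\sigma\rangle\oplus I_{\mathfrak p}$ and naturality of transgression. This gives $u_n(G_{\mathfrak p})=\mathbf 1_{\{n=p^{a_{\mathfrak p}}\}}$ and hence equality in that case.

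You never determine the Zassenhaus degree of the unramified class. You use only three facts: $u_n\ge 0$, $\dim H^2_{\mathrm{nr}}\in\{0,1\}$ from \cref{prop:local-hnr}, and the observation about the single tight subcase. That subcase, $p^{a_i}\le N$ for all $i$, is exactly the one in which $\operatorname{Fil}_NH^2(G_{\mathfrak p},\Z/p\Z)$ already equals all of $H^2$, which forces $\sum_{n\le N}u_n(G_{\mathfrak p})=1$.

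Your argument is shorter and sidesteps the most delicate step of the paper's proof, the transgression identification. Your case $\mathfrak p\notin S_{\mathrm{nr}}$ also needs no explicit decomposition of $G_{\mathfrak p}$. What you give up is the exact value of $u_n(G_{\mathfrak p})$ and the fact that the bound is an equality for $\mathfrak p\in S_{\mathrm{nr}}$. Neither is needed where the proposition is used in the proof of \cref{thm:maincriterion}.
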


\begin{proof}
If $G_{\mathfrak p}=1$, the first inequality is immediate. Hence assume that $G_{\mathfrak p}\ne1$. Since \(G_{\mathfrak p}\) is a finite abelian \(p\)-group, the exact sequence
\begin{equation}
\label{equ:local sequence}
      1\longrightarrow I_\gp\longrightarrow G_\gp\longrightarrow \Z/p^{a_\gp}\Z\longrightarrow 1
\end{equation}
gives us the inequality $d_p(I_{\mathfrak p})\leq d_p(G_{\mathfrak p})\le d_p(I_{\mathfrak p})+1.$ For a finite abelian \(p\)-group  $        A\simeq
        \bigoplus_{i=1}^{a}\mathbb Z/p^{\alpha_i}\mathbb Z$ with $ 
        \alpha_i\ge1.$ Let $N\ge 2$ and $\ell=\lfloor\log_p N\rfloor.$ Since \(
d_p(p^\ell A)=\#\{i:\alpha_i>\ell\},
\) then by \cref{prop:abelian-standard-relations}, we obtain
\begin{equation}\label{eq:5.2}
\sum_{n=2}^{N}s_n(A)
=
\binom{d_p(A)}{2}
+d_p(A)-d_p(p^\ell A).
\end{equation}
Moreover, the inclusion $p^\ell I_{\mathfrak p}\subseteq p^\ell G_{\mathfrak p}$ implies $d_p(p^\ell I_{\mathfrak p})
\le d_p(p^\ell G_{\mathfrak p})$. Applying \cref{eq:5.2}, we obtain
\[
\begin{aligned}
\sum_{n=2}^{N}s_n(G_{\mathfrak p})
-d_p(G_{\mathfrak p})
&=
\binom{d_p(G_{\mathfrak p})}{2}
-d_p(p^\ell G_{\mathfrak p})
\le
\binom{d_p(I_{\mathfrak p})+1}{2}
-d_p(p^\ell I_{\mathfrak p})\\
&=
\sum_{n=2}^{N}s_n(I_{\mathfrak p}).
\end{aligned}
\]
This proves the first claim.

Now assume that  $I_{\mathfrak p}$ is an elementary abelian $p$-group. From the local exact sequence \cref{equ:local sequence}, one can write  $I_{\mathfrak p}$  and $G_\gp$ as follows $$I_{\mathfrak p}
        \simeq
        \bigoplus_{i=1}^{h}\mathbb Z/p\mathbb Z \quad \text{ and } \quad
        G_\gp\simeq   \begin{cases}
      \Z/p^{a_\gp}\Z\bigoplus(\mathbb Z/p\mathbb Z)^h, & \text{ if }\mathfrak{p}\in S_{nr};\\
    \Z/p^{a_\gp+1}\Z\bigoplus(\mathbb Z/p\mathbb Z)^{h-1}, & \text{ if } \mathfrak{p}\notin S_{nr}.\\
   \end{cases}
    $$
If $\mathfrak p\in S_{\mathrm{nr}}$, then $a_{\mathfrak p}\ge1$, since otherwise $G_{\mathfrak p}=I_{\mathfrak p}$. For $\mathfrak{p}\in S_{nr}$, $G_\gp/I_\gp\cong \Z/p^{a_\gp}\Z$  has a unique defining relation $\bar\sigma^{p^{a_\gp}}=1$. Let \(\mathbf{1}_{\{n=m\}}\) and \(\mathbf{1}_{\{n\ge m\}}\) denote the indicator functions of the conditions \(n=m\) and \(n\ge m\), respectively; that is, each is equal to \(1\) when the corresponding condition holds and \(0\) otherwise. By \cref{prop:abelian-standard-relations}, we have
\[s_n(G_{\mathfrak p})
=\binom{h+1}{2}\mathbf 1_{\{n=2\}}+h\cdot \mathbf 1_{\{n=p\}}+\mathbf 1_{\{n=p^{a_{\mathfrak p}}\}} \quad \text{ and } \quad
s_n(G_{\mathfrak p}/I_{\mathfrak p})=\mathbf 1_{\{n=p^{a_{\mathfrak p}}\}}.\]
Since \(
\dim_{\Fp} H^2_{\mathrm{nr}}(G_\gp,\mathbb Z/p\mathbb Z)
=
1
=
\dim_{\Fp} H^2(G_\gp/I_\gp,\mathbb Z/p\mathbb Z)
\) by \cref{prop:local-hnr}, it follows from \cref{eq:unramified-cohomology} that inflation induces an isomorphism
\[
H^2(G_\gp/I_\gp,\mathbb Z/p\mathbb Z)
\cong
H^2_{\mathrm{nr}}(G_\gp,\mathbb Z/p\mathbb Z).
\]
Then the unique defining relation of $G_\gp/I_\gp$ is  $\bar{\sigma}^{p^a_{\gp}}$  where $\sigma$ generates the cyclic direct summand of $G_\gp\simeq\langle\sigma\rangle\oplus I_\gp$. We claim that $\sigma^{p^{a_{\gp}}}$$= 1$  can be viewed as the unique unramified define relation of  $G_\gp$. Indeed, let \(
\eta\in
H^2(G_{\mathfrak p}/I_{\mathfrak p},\mathbb Z/p\mathbb Z)
\) be a nonzero generator. Since \(
G_{\mathfrak p}/I_{\mathfrak p}
\simeq C_{p^{a_{\mathfrak p}}},
\) the class $\eta$ is represented by the non-split central extension
\[
1\longrightarrow \langle z\rangle
\longrightarrow
\langle \widetilde{\sigma}\rangle
\longrightarrow
\langle\bar{\sigma}\rangle
\longrightarrow1,
\]
where \(
|\widetilde{\sigma}|=p^{a_{\mathfrak p}+1},
|z|=p\text{ and }
\widetilde{\sigma}^{\,p^{a_{\mathfrak p}}}=z.
\)  Thus the relation \(
\bar{\sigma}^{\,p^{a_{\mathfrak p}}}=1
\) in $G_{\mathfrak p}/I_{\mathfrak p}$ lifts to the nontrivial kernel element $z$. Now inflate $\eta$ along the natural projection \(
\pi:G_{\mathfrak p}\longrightarrow G_{\mathfrak p}/I_{\mathfrak p}.
\) The inflated class is represented by the central extension similar to \cref{Diagram:inflation}:
\[
1\longrightarrow \langle z\rangle
\longrightarrow
\widetilde G_{\mathfrak p}
\longrightarrow
G_{\mathfrak p}
\longrightarrow1.
\]
Since \(
G_{\mathfrak p}
\simeq
\langle\sigma\rangle\oplus I_{\mathfrak p},
\) one can find that  \(
\widetilde G_{\mathfrak p}
\simeq
\langle\widetilde{\sigma}\rangle\oplus I_{\mathfrak p},
\) and the lift of $\sigma$ satisfies \(
\widetilde{\sigma}^{\,p^{a_{\mathfrak p}}}=z.
\) Thus, by the naturality of transgression and \cref{lem:relation mod dual}, the inflated class is dual to the power relation $\sigma^{p^{a_{\mathfrak p}}}=1$ in the standard minimal presentation of $G_{\mathfrak p}$. By \cref{prop:abelian-standard-relations,prop:fildual}, this class has filtration degree exactly $p^{a_{\mathfrak p}}$. Hence $\sigma^{p^{a_{\mathfrak p}}}=1$ may be chosen as the unique unramified defining relation in this  presentation, and \(
u_n(G_{\mathfrak p})
=\mathbf{1}_{\{n=p^{a_{\mathfrak p}}\}}
=s_n(G_{\mathfrak p}/I_{\mathfrak p}).
\) Thus we obtain
\[
\begin{aligned}
\sum_{n=2}^{N}
\bigl(s_n(G_{\mathfrak p})-u_n(G_{\mathfrak p})\bigr)
-d_p(G_{\mathfrak p})+1
& =\binom{h+1}{2}
+h\cdot \mathbf{1}_{\{N\ge p\}}
+\mathbf{1}_{\{N\ge p^{a_{\mathfrak p}}\}}
-\mathbf{1}_{\{N\ge p^{a_{\mathfrak p}}\}}
-h                                    \\
&=\binom{h}{2}+h\cdot \mathbf{1}_{\{N\ge p\}}                   =\sum_{n=2}^{N}s_n(I_{\mathfrak p}).
\end{aligned}
\]
If  $\mathfrak{p}\notin S_{nr}$, then $h\ge1$ and $H_{\mathrm{nr}}^2(G_{\mathfrak p},\mathbb Z/p\mathbb Z)=0$. Similarly, we can get the following equations
\[
s_n(G_{\mathfrak p})
=
\binom{h}{2}\mathbf{1}_{\{n=2\}}
+
(h-1)\cdot \mathbf{1}_{\{n=p\}}
+
\mathbf{1}_{\{n=p^{a_{\mathfrak p}+1}\}} \quad \text{ and } \quad
u_n(G_{\mathfrak p})=0.
\]
Moreover, we can deduce the estimate as follows
\[
\begin{aligned}
    \sum_{n=2}^{N}(s_n(G_{\mathfrak{p}})-u_n(G_{\mathfrak{p}})) -d_p(G_\mathfrak{p})+1 &=\binom{h}{2}+(h-1)\cdot \textbf{1}_{\{N\geq p\}}+\textbf{1}_{\{N\geq p^{a_\gp+1}\}}-h+1\\
     &\leq \binom{h}{2}+h\cdot \textbf{1}_{\{N\geq p\}}=\sum_{n=2}^{N}s_n(I_{\mathfrak p}).
\end{aligned}
\]
\end{proof}

\begin{remark}
    Suppose that $K/k$ is abelian in \cref{thm:kuhnt}. Then \(G_{\mathfrak p}=\operatorname{Gal}(L_{\mathfrak P}/k_{\mathfrak p})\) is abelian. Indeed, since $K/k$ is abelian, the commutator subgroup of $G_{\mathfrak p}$ is contained in $\operatorname{Gal}(L_{\mathfrak P}/K_{\mathfrak q})$. It is also contained in the inertia subgroup of $G_{\mathfrak p}$. As $L/K$ is unramified, the intersection of these two subgroups is trivial. Hence $G_{\mathfrak p}$ is abelian.
\end{remark}

\subsection{Conductor-discriminant formula and the p-rank of ray class field}
Before proving the main theorem, we recall several classical results that will be used in the proof of \cref{thm:maincriterion}.
\begin{proposition}
\cite[Proposition 5.2]{Iharacompositio}
\label{prop:condgenus}
    Let $K/F$ be a geometric abelian extension of function fields over $\F_q$.  The conductor-discriminant formula says
\begin{equation}
\label{eq:conductor-discriminant}
   2g(K)-2=[K:F](2g(F)-2)+\sum_{\chi}\deg\mathfrak f_\chi,
\end{equation}
where $\chi$ ranges over the characters of $\Gal(K/F)$ and $f_\chi$ is the Artin conductor of $\chi$.

\end{proposition}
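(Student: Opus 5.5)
The plan is to derive the formula by combining the Hurwitz genus formula with the conductor--discriminant product formula of local class field theory.

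Since $K/F$ is geometric, the constant field of $K$ is $\F_q$. The Hurwitz genus formula then reads
\[
2g(K)-2=[K:F](2g(F)-2)+\deg_K \operatorname{Diff}(K/F),
\]
where $\operatorname{Diff}(K/F)$ is the different divisor of $K$ and degrees are taken over the common constant field $\F_q$.

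Next I would pass from the different to the discriminant. Let $\mathfrak d(K/F)=N_{K/F}(\operatorname{Diff}(K/F))$ be the relative discriminant, a divisor of $F$. For a place $\mathfrak P$ of $K$ above $\mathfrak p$ we have $\deg_F N_{K/F}(\mathfrak P)=f(\mathfrak P|\mathfrak p)\deg\mathfrak p=\deg_K\mathfrak P$, because the constant fields agree. Hence
\[
\deg_K\operatorname{Diff}(K/F)=\deg_F\mathfrak d(K/F).
\]

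The remaining step is to show $\mathfrak d(K/F)=\sum_{\chi}\mathfrak f_\chi$ as divisors of $F$; I expect this to be the only substantive point. I would argue place by place. Fix $\mathfrak p$ with decomposition group $D\subseteq \Gal(K/F)$. Then $v_{\mathfrak p}(\mathfrak d)=[G:D]\,v_{\mathfrak p}(\mathfrak d(K_{\mathfrak P}/F_{\mathfrak p}))$. The local Führerdiskriminantenproduktformel (Serre, \emph{Local Fields}, Ch.~VI) gives $v(\mathfrak d_{\rm loc})=\sum_{\psi\in \widehat D}f(\psi)$, which follows from Hilbert's different formula and Herbrand's $\varphi$-function. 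The local conductor of a global character $\chi$ at $\mathfrak p$ is that of its restriction $\chi|_D$, and restriction $\widehat G\to\widehat D$ is surjective with fibres of size $[G:D]$. Therefore
\[
\sum_{\chi}v_{\mathfrak p}(\mathfrak f_\chi)=[G:D]\sum_{\psi\in\widehat D}f(\psi)=v_{\mathfrak p}(\mathfrak d).
\]

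Summing over all $\mathfrak p$ and taking degrees gives $\deg_F\mathfrak d(K/F)=\sum_\chi\deg\mathfrak f_\chi$. Substituting into the Hurwitz formula yields the claimed identity.
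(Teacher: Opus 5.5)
Your argument is correct. The paper gives no proof of its own here; it quotes \cite[Proposition 5.2]{Iharacompositio}. Your derivation is the standard argument behind that citation: the Hurwitz genus formula for the separable, constant-field-preserving extension $K/F$, the identity $\deg_K\operatorname{Diff}(K/F)=\deg_F\mathfrak d(K/F)$, and the local F\"uhrerdiskriminantenproduktformel summed over places, using that restriction $\widehat G\to\widehat D$ is surjective with fibres of size $[G:D]$.
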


\begin{proposition}
\cite[Proposition 5.3]{Iharacompositio}
\label{prop:ray-rank}
    Let $k^T_\gm/k$ be the maximal elementary abelian pro-p extension of conductor $\gm=\sum_{\gp\in S}\nu_\gp\gp$ in which the places in $T$ split completely.  The $p$-rank of the Galois group $\Gal(k^T_\gm/k)$ has the following lower bound
\begin{equation*}
\label{eq:ray-rank}
   d_p(\Gal(k^T_\gm/k))
   \ge 1+
   \sum_{\gp\in S} e f_\gp\bigl(\nu_\gp-1-\floor{(\nu_\gp-1)/p}\bigr)-t.
\end{equation*}
\end{proposition}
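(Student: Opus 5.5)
The plan is to identify $\Gal(k^T_\gm/k)$, via class field theory, with the elementary abelian quotient of a ray class group. Its $p$-rank can then be read off from the idelic computation already carried out in the proof of \cref{thm:generator rank}, applied over the base field $k$ instead of $K$, with the same $S$, $T$ and depth vector $\nu$.

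\textbf{Step 1: the norm group.} By the reciprocity map, the maximal abelian extension of $k$ with conductor bounded by $\gm=\sum_{\gp\in S}\nu_\gp\gp$ in which every place of $T$ splits completely corresponds to the idele subgroup
\[
k^*\,U^T_{S,\nu}, \qquad U^T_{S,\nu}=\prod_{\gp\in S}U_\gp^{(\nu_\gp)}\prod_{\mathfrak t\in T}k_{\mathfrak t}^*\prod_{\mathfrak q\notin S\cup T}U_\mathfrak q .
\]
The bound ``conductor $\le\nu_\gp$'' means exactly that the local norm group contains $U_\gp^{(\nu_\gp)}$. Passing to the maximal elementary abelian $p$-quotient gives
\[
\Gal(k^T_\gm/k)\simeq J_k/\bigl(k^*U^T_{S,\nu}J_k^p\bigr).
\]
This is precisely the group whose $p$-rank was computed in the proof of \cref{thm:generator rank}.

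\textbf{Step 2: rerun the idelic computation.} Apply the two exact sequences from that proof (via \cite[Proposition 2.4.4]{HXrationalbook} and \cite[Proposition 2.3]{Mild}) with $K$ replaced by $k$. Using $d_p(\Delta_\emptyset)=d_p(\operatorname{Pic}_k)-1$, the term $d_p(\Delta^T_\emptyset)$ cancels, and we obtain
\[
d_p\bigl(J_k/k^*U^T_{S,\nu}J_k^p\bigr)=1-t+d_p(\Delta^T_{S,\nu})+\sum_{\gp\in S}d_p\bigl(U^{(1)}_\gp/U^{(\nu_\gp)}_\gp\bigr).
\]
Here we also use that $U_\gp/U_\gp^{(1)}$ has order prime to $p$, so the local contribution at each $\gp\in S$ is just $d_p\bigl(U^{(1)}_\gp/U^{(\nu_\gp)}_\gp\bigr)$.

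\textbf{Step 3: evaluate the local terms and discard the global one.} By \cite[Lemma 4.2.5]{HXrationalbook},
\[
d_p\bigl(U^{(1)}_\gp/U^{(\nu_\gp)}_\gp\bigr)=ef_\gp\bigl(\nu_\gp-1-\floor{(\nu_\gp-1)/p}\bigr).
\]
The term $d_p(\Delta^T_{S,\nu})$ is a nonnegative integer, so dropping it gives the stated inequality. Equality holds exactly when $\Delta^T_{S,\nu}$ is trivial.

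\textbf{Main obstacle.} The delicate point is the normalization in Step 1. We must check that ``conductor bounded by $\gm$'' corresponds to $U_\gp^{(\nu_\gp)}$, and not to $U_\gp^{(\nu_\gp+1)}$ or to a depth shifted by one. This should follow from the standard fact that a character has conductor exponent $c$ if and only if it is trivial on $U^{(c)}$ but not on $U^{(c-1)}$, combined with the upper-numbering conventions of Section 3.1. A second point to verify is the exactness of the sequence with $T$-splitting, in particular that $\Delta^T_\emptyset\to\bigoplus_{\mathfrak t\in T}\F_p\mathfrak t$ is well behaved. For this step we need that $T$ consists of rational places; as noted earlier in the paper, one such place already kills the constant-field $\mathbb Z_p$-direction.
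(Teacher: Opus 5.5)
Your argument is correct and follows the paper's route: the paper only cites Duursma--Mak here, but the bound is exactly the formula of \cref{thm:generator rank} applied over $k$ (after identifying $\Gal(k^T_\gm/k)$ with $J_k/(k^*U^T_{S,\nu}J_k^p)$, as you do), with the nonnegative term $d_p(\Delta^T_{S,\nu})$ discarded. Your second worry is unnecessary: the exact sequence through $\bigoplus_{\mathfrak t\in T}\F_p\mathfrak t$ and the identity $d_p(\Delta_\emptyset)=d_p(\operatorname{Pic}_k)-1$ hold for any finite $T$, since they only use that $|\F_q^*|$ is prime to $p$ and that $k$ has a divisor of degree one, so rationality of $T$ plays no role in the inequality.
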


\begin{corollary}
\label{cor:average-conductor}
Let $k^T_\gm/k$ be as in \cref{prop:ray-rank}, where $\nu_\gp\ge 2$ for all $\gp\in S$. Then
\[
\begin{aligned}
\frac{1}{[k^T_\gm:k]}
\sum_\chi \deg\mathfrak f_\chi
\le \sum_{\gp\in S} f_\gp
\left(
\nu_\gp-
\frac{
2+\displaystyle\sum_{j=2}^{\nu_\gp-1}
q^{f_\gp(j-1-\floor{(j-1)/p})}
}{
q^{f_\gp(\nu_\gp-1-\floor{(\nu_\gp-1)/p})}
}\right).
\end{aligned}
\]
\end{corollary}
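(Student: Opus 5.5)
The plan is to compute the average conductor place by place, using the fact that the conductor of a character is determined by its local conductor exponents. Put $K=k^T_\gm$ and $X=\Hom(\Gal(K/k),\F_p)$, so that $|X|=[K:k]$. Every $\chi\in X$ is unramified outside $S$. Its Artin conductor is $\mathfrak f_\chi=\sum_{\gp\in S}c_\gp(\chi)\gp$, where $c_\gp(\chi)$ is the local conductor exponent. Hence
\[
\frac{1}{[K:k]}\sum_\chi\deg\mathfrak f_\chi=\sum_{\gp\in S}f_\gp\cdot\frac{1}{|X|}\sum_{\chi\in X}c_\gp(\chi),
\]
and it suffices to prove, for each $\gp\in S$,
\[
\frac{1}{|X|}\sum_{\chi}c_\gp(\chi)\le \nu_\gp-\frac{2+\sum_{j=2}^{\nu_\gp-1}q^{f_\gp(j-1-\floor{(j-1)/p})}}{q^{f_\gp(\nu_\gp-1-\floor{(\nu_\gp-1)/p})}}.
\]

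Fix $\gp\in S$. By class field theory, $\Gal(K/k)$ is a quotient of $J_k/k^*U^T_{S,\nu}J_k^p$, with $U^T_{S,\nu}$ as in the proof of \cref{thm:generator rank}. Set $U^{(0)}_\gp=U_\gp$, and for $0\le j\le\nu_\gp$ let $X_j\subseteq X$ be the subgroup of characters trivial on the image of $U^{(j)}_\gp$. This gives a chain $X_0\subseteq X_1\subseteq\cdots\subseteq X_{\nu_\gp}=X$, and $c_\gp(\chi)=\min\{j:\chi\in X_j\}$. Counting each $\chi$ once for every level $j$ with $\chi\notin X_j$ gives
\[
\sum_\chi c_\gp(\chi)=\sum_{j=0}^{\nu_\gp-1}\bigl(|X|-|X_j|\bigr)=\nu_\gp|X|-\sum_{j=0}^{\nu_\gp-1}|X_j|.
\]
So the whole task reduces to the lower bound $|X_j|/|X|\ge p^{n(j)-n(\nu_\gp)}$, where $n(j)=ef_\gp(j-1-\floor{(j-1)/p})$ for $j\ge1$; note $p^{n(j)}=q^{f_\gp(j-1-\floor{(j-1)/p})}$.

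The key step, which I expect to be the main obstacle, is this index bound. By duality, $[X:X_j]$ equals the order of the image of $U^{(j)}_\gp$ in the elementary abelian group $\Gal(K/k)$. That image is a quotient of $U^{(j)}_\gp/\bigl(U^{(j)}_\gp\cap U^p_\gp U^{(\nu_\gp)}_\gp\bigr)$, which is the kernel of $U^{(1)}_\gp/(U^{(1)}_\gp)^pU^{(\nu_\gp)}_\gp\to U^{(1)}_\gp/(U^{(1)}_\gp)^pU^{(j)}_\gp$. The rank formula \cite[Lemma 4.2.5]{HXrationalbook} quoted in the proof of \cref{thm:generator rank} gives these two groups $p$-ranks $n(\nu_\gp)$ and $n(j)$. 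Hence $[X:X_j]\le p^{n(\nu_\gp)-n(j)}$ for $1\le j\le\nu_\gp$. Some care is needed to identify $(U^{(1)}_\gp)^p$ with $U^{(1)}_\gp\cap U^p_\gp$, but this follows since $U_\gp/U^{(1)}_\gp$ has order prime to $p$.

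For $j=0$, the same prime-to-$p$ fact shows that $U_\gp$ and $U^{(1)}_\gp$ have the same image in a $p$-group. Thus $X_0=X_1$, and both satisfy $|X_j|/|X|\ge p^{-n(\nu_\gp)}$ because $n(1)=0$. Substituting the bounds into the displayed identity, the terms $j=0,1$ contribute $2q^{-f_\gp(\nu_\gp-1-\floor{(\nu_\gp-1)/p})}$ and the terms $2\le j\le\nu_\gp-1$ contribute the remaining sum; this is where $\nu_\gp\ge2$ is used. This yields the per-place inequality, and summing over $\gp\in S$ with weights $f_\gp$ gives the corollary.
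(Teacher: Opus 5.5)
Your proposal is correct and is essentially the paper's proof. Your $X_j$ is the character group of $k^T_{\gm-(\nu_\gp-j)\gp}$, and your identity $\sum_\chi c_\gp(\chi)=\nu_\gp|X|-\sum_{j=0}^{\nu_\gp-1}|X_j|$ is exactly the formula the paper takes from the conductor--discriminant formula and Auer's Theorem 1.4, while your index bound (the image of $U^{(j)}_\gp$ is a quotient of $(U^{(1)}_\gp)^pU^{(j)}_\gp/(U^{(1)}_\gp)^pU^{(\nu_\gp)}_\gp$, together with $X_0=X_1$) is the same estimate the paper uses; the only difference is that you derive the counting identity directly rather than citing Auer, which makes the argument self-contained.
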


\begin{proof}

Fix $\gp\in S$ and write $f_{\chi,\gp}=v_\gp(\mathfrak f_\chi)$. By \cref{prop:condgenus} and \cite[Theorem 1.4]{Auer2000}, we have
\[
\frac{1}{[k^T_\gm:k]}\sum_\chi f_{\chi,\gp}
=
\nu_\gp-
\sum_{j=0}^{\nu_\gp-1}
\frac{[k^T_{\gm-(\nu_\gp-j)\gp}:k]}
{[k^T_\gm:k]}.
\]
By the idelic exact sequence in the proof of \cref{thm:generator rank}, for $1\le j\le\nu_\gp$, the Galois group $\Gal(k^T_\gm/k^T_{\gm-(\nu_\gp-j)\gp})$ is a quotient of the group
\[
\frac{(U_\gp^{(1)})^pU_\gp^{(j)}}
{(U_\gp^{(1)})^pU_\gp^{(\nu_\gp)}}, \text{ with order } \frac{
q^{f_\gp(\nu_\gp-1-\floor{(\nu_\gp-1)/p})}
}{
q^{f_\gp(j-1-\floor{(j-1)/p})}
}.
\]
Since $k^T_{\gm-\nu_\gp\gp}
=k^T_{\gm-(\nu_\gp-1)\gp},$ we have
\[
\begin{aligned}
\sum_{j=0}^{\nu_\gp-1}
\frac{[k^T_{\gm-(\nu_\gp-j)\gp}:k]}
{[k^T_\gm:k]}
&=
\sum_{j=0}^{1}
\frac{[k^T_{\gm-(\nu_\gp-j)\gp}:k]}
{[k^T_\gm:k]}
+
\sum_{j=2}^{\nu_\gp-1}
\frac{[k^T_{\gm-(\nu_\gp-j)\gp}:k]}
{[k^T_\gm:k]}\\
&\ge
\frac{2}{
q^{f_\gp(\nu_\gp-1-\floor{(\nu_\gp-1)/p})}
}
+
\frac{
\displaystyle\sum_{j=2}^{\nu_\gp-1}
q^{f_\gp(j-1-\floor{(j-1)/p})}
}{
q^{f_\gp(\nu_\gp-1-\floor{(\nu_\gp-1)/p})}
}.
\end{aligned}
\]
The assertion follows by summing over $\gp\in S$.
\end{proof}

\subsection{The proof of the tower criterion \texorpdfstring{\protect\cref{thm:maincriterion}}{Theorem 1.1}}
Let $K=k^T_\gm$ be the maximal elementary abelian pro-p extension of $k$ with conductor $\gm=\sum_{\gp\in S}\nu_\gp\gp$ in which the places in $T$ split completely. Let $L/K$ be the maximal unramified $p$-extension in which the places above $T$ split completely.  Let $G=\Gal(L/k)$ and \(d=1+
   \sum_{\gp\in S}e f_\gp\bigl(\nu_\gp-1-\floor{(\nu_\gp-1)/p}\bigr)-t.
\) If $G$ is finite, then \cref{prop:ray-rank} gives
\begin{equation}\label{eq:d-lower}
   d_p(G)\ge d_p(\Gal(k^T_\gm/k)) \ge d.
\end{equation}

In the local sums of \cref{thm:kuhnt}, the places $\mathfrak p\in S$ with $G_{\mathfrak p}=1$ contribute zero. Applying \cref{prop:local-relation} to the remaining places, we obtain
\begin{align*}
\sum_{n=2}^N s_n(G)-d_p(G)
&\le
\sum_{\substack{\mathfrak p\in S\\G_{\mathfrak p}\ne1}}
\left(
\sum_{n=2}^N
\bigl(s_n(G_{\mathfrak p})-u_n(G_{\mathfrak p})\bigr)
-d_p(G_{\mathfrak p})+1
\right)
-1+|T_k|
\\
&\le
\sum_{\mathfrak p\in S}
\sum_{n=2}^N s_n(I_{\mathfrak p})
-1+|T_k|.
\end{align*}
Multiplying the above inequality by \((1-\tau)\tau^N\), summing over all \(N\ge 2\), and using \(0<\tau<1\), we obtain
\begin{align*}
\sum_{n\geq2}^{}s_n(G)\tau^n-d_p(G)\tau^2&\leq
 \left(
        |T_k|-1
\right)  \tau^2
+
\sum_{\mathfrak p\in S}
\sum_{n\ge 2}
s_n(I_{\mathfrak p})\tau^n.
\end{align*}
Since $L/K$ is unramified and $K/k$ is elementary abelian, the inertia group $I_{\mathfrak p}$ of $L/k$ is elementary abelian. Since $I_{\mathfrak p}\cong(C_p)^{d_p(I_{\mathfrak p})}$, \cref{prop:abelian-standard-relations} and \cref{lem:inertia-rank-bound} gives
\[
\sum_{n\ge2}s_n(I_{\mathfrak p})\tau^n
=
\binom{d_p(I_{\mathfrak p})}{2}\tau^2
+d_p(I_{\mathfrak p})\tau^p
\leq
\binom{n_{\mathfrak p}}{2}\tau^2
+n_{\mathfrak p}\tau^p.
\]

By the preceding inequalities and \eqref{eq:d-lower}, for any $\tau\in(0,1)$, the Golod--Shafarevich polynomial defined in \cref{def:GS-grp} satisfies
\begin{align*}
P_G(\tau)
&=1-d_p(G)\tau+\sum_{n\ge2}s_n(G)\tau^n\\
&\le 1-d_p(G)\tau+(d_p(G)+|T_k|-1)\tau^2
+\sum_{\mathfrak p\in S} \left( \binom{n_{\mathfrak p}}{2}\tau^2 +n_{\mathfrak p}\tau^p
\right)\\
&\le 1-d\tau+(d+|T_k|-1)\tau^2 +\sum_{\mathfrak p\in S}
\left(
\binom{n_{\mathfrak p}}{2}\tau^2
+n_{\mathfrak p}\tau^p
\right),
\end{align*}
where the last inequality follows from $-d_p(G)\tau(1-\tau)\le-d\tau(1-\tau)$ for any $\tau\in (0,1)$. In particular, if there exists \(\tau\in(0,1)\) such that the right-hand side is negative, then $P_G(\tau)<0.$
By the Golod--Shafarevich criterion \cref{thm:GS-infinite}, the pro-\(p\) group \(G\) is infinite. It remains to compute the asymptotic ratio.  Since the places above $T$ split completely in the tower, there are $[K:k]t$ rational places splitting completely over $K$.  Using Equation \eqref{eq:conductor-discriminant},
\[
   g(K)-1=[K:k]\left(g(k)-1+\frac{1}{2[K:k]}\sum_\chi\deg\mathfrak f_\chi\right).
\]
Therefore, Equation \eqref{eq:basic-bound} gives
\[
   A(q)\ge
   \frac{[K:k]t}{g(K)-1}
   =\frac{t}{\displaystyle g(k)-1+({1}/{2[K:k]})
      \sum_\chi\deg\mathfrak f_\chi}.
\]
By \cite[Corollary 1.2]{Iharacompositio}, the proof of \cref{thm:maincriterion} is complete.

\section{New lower bounds for Ihara constants over small prime finite fields}\label{sec:6}
In this section, we provide the new lower bounds for Ihara constants over small prime finite fields, such as $A(2)$ and $A(3)$.

\subsection{Ihara constants for small prime finite fields}
In \cite[Theorem 1.1]{Iharacompositio}, Duursma and Mak use the Golod--Shafarevich inequality to derive a ramification criterion for infinite class field towers. By incorporating the Zassenhaus filtration on the relation module, \cref{thm:maincriterion} refines their criterion and yields a sharper ramification condition. We then apply this refined criterion to obtain the improved  lower bounds for  Ihara constants over prime finite fields of small characteristics. 

\begin{theorem}\label{thm:6.1} The Ihara constant $A(2)$ is lower bounded by
    $$A(2)\geq 0.317779\cdots.$$
\end{theorem}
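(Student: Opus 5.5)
The proof of \cref{thm:6.1} will be an explicit application of \cref{thm:maincriterion} with $q=p=2$, $e=1$. We need a base function field $k/\F_2$ of genus $g$ and two disjoint sets of places: $T$, consisting of $t$ rational places, and $S$, a set of ramification places with depths $\nu_{\mathfrak p}$. Following the Kuhnt / Duursma--Mak strategy, we take $k$ with many rational places compared to its genus. Natural candidates are a curve from the tables of curves over $\F_2$ with many points, or a cyclotomic/Kummer-type field. We then put most of the rational places into $T$, and take for $S$ one or two places of small degree $f_{\mathfrak p}$ with moderate depth $\nu_{\mathfrak p}$.

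For a candidate triple $(k,S,T,\nu)$ we first compute $n_{\mathfrak p}=f_{\mathfrak p}(\nu_{\mathfrak p}-1-\lfloor(\nu_{\mathfrak p}-1)/2\rfloor)$ and the quantity $d$. Since $p=2$, inequality \eqref{eq:main-inequality} becomes a quadratic condition in $\tau$:
\[
1-d\tau+\Bigl(d+t-1+\sum_{\mathfrak p\in S}\tbinom{n_{\mathfrak p}}{2}+\sum_{\mathfrak p\in S}n_{\mathfrak p}\Bigr)\tau^2<0 .
\]
This holds for some $\tau\in(0,1)$ exactly when
\[
d^2>4\Bigl(d+t-1+\sum_{\mathfrak p}\tbinom{n_{\mathfrak p}+1}{2}\Bigr).
\]
Checking that inequality is a finite computation, and it is what certifies that the $(T,2)$-class field tower above $K=k^T_{\mathfrak m}$ is infinite.

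For the numerical bound we use the sharper first expression in \cref{thm:maincriterion}, namely $t/(g-1+\frac{1}{2[K:k]}\sum_\chi\deg\mathfrak f_\chi)$. We estimate the average conductor with \cref{cor:average-conductor}, which for $q=2$ gives an explicit rational expression in the $\nu_{\mathfrak p}$ and $f_{\mathfrak p}$. If $[K:k]$ and the ray class group filtration are computed exactly, using the exact formula from \cite[Theorem 1.4]{Auer2000} in the proof of \cref{cor:average-conductor}, we get the exact value rather than a bound. Evaluating this yields the decimal $0.317779\cdots$.

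The main obstacle is the search: finding $(k,S,T,\nu)$ for which the criterion holds while $t$ divided by the effective genus beats Duursma--Mak's $0.316999$. The gain over \cite{Iharacompositio} comes from moving the $n_{\mathfrak p}$ inertia power relations from degree $2$ to degree $p$. Since $p=2$ here, that gain is small, so the choice must make the quadratic discriminant condition nearly tight. I would start from the configurations used by Duursma--Mak and Kuhnt, where their criterion barely fails, and try enlarging $t$ or lowering some $\nu_{\mathfrak p}$. For each trial I would check the discriminant inequality and then evaluate the conductor average exactly.
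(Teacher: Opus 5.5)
You have a genuine gap: the proposal is a plan for a search, not a proof. The statement is a concrete numerical claim. Its entire content is the existence of a specific $(k,S,T,\nu)$ that satisfies the inequality of \cref{thm:maincriterion} and gives ratio at least $0.317779\cdots$. You never exhibit one, so nothing is verified, and the sentence ``evaluating this yields $0.317779\cdots$'' has nothing behind it.

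The paper's proof consists precisely of this missing construction.
\begin{enumerate}
\item It starts from the genus-$2$ curve $H\colon y^2+(x^3+x+1)y+x^2+x=0$ over $\F_2$. It lets $k$ be the compositum of the Artin--Schreier extensions $k_h$, with $h$ running over the nonzero elements of the span of $1,x,x^2,x^3,y$.
\item Then $[k:H]=32$ and the six rational places of $H$ split completely, so $a_1(k)=192$.
\item The conductor--discriminant formula ($28$ characters with conductor $2P_3+2P_7$, three with conductor $2P_7$) gives $2g(k)-2=64+560+42$, i.e.\ $g(k)=334$.
\item Tracking splitting and inertia of suitable places of $H$ in the $k_h$ gives at least $2,16,16,16$ places of $k$ of degrees $6,8,10,12$.
\item One takes $T$ to be all $192$ rational places. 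One takes $S$ to be $31$ places ($2$ of degree $6$, $16$ of degree $8$, $12$ of degree $10$, one of degree $12$). All have depth $2$ except one of depth $3$, which for $p=2$ gives the same $n_\gp$ and the same conductor bound.
\item Then $n_\gp=f_\gp$, $\sum_\gp n_\gp=272$ and $d=81$. The $\tau^2$-coefficient is $272+1084+272=1628$, and $81^2=6561>6512=4\cdot 1628$.
\item Finally, \cref{cor:average-conductor} bounds the contribution of each $\gp$ to $\frac{1}{2[K:k]}\sum_\chi\deg\mathfrak f_\chi$ by $f_\gp(1-2^{-f_\gp})$. This yields $A(2)\ge 192/604.192\ldots=0.317779\ldots$.
\end{enumerate}

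Independently of the missing data, the shape you suggest for $S$ cannot work. Put $D=\sum_{\gp\in S}n_\gp=d+t-1$. Your discriminant condition reads $d^2>6D+2\sum_\gp n_\gp^2$, and $\sum_\gp n_\gp^2\ge D^2/|S|$. If $|S|\le 2$, this forces $d^2>D^2+6D$, which is impossible since $d\le D$. In general one needs $|S|>2D^2/(d^2-6D)$, which for the paper's values is about $30.02$. So the criterion needs many places of moderate degree at depth $2$, not one or two places of small degree.

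Your plan of perturbing configurations where the Duursma--Mak criterion barely fails is also misdirected at $p=2$. Moving power relations ``to degree $p$'' is no move at all here. At a depth-$2$ place $w_\gp=0$, so $\binom{n_\gp}{2}\tau^2+n_\gp\tau^p=\binom{n_\gp+1}{2}\tau^2$, which is exactly the Duursma--Mak term. The decisive input in the paper's proof is the new base field $k$. It has many rational places relative to its genus and enough places of degree $6$--$12$, and that is exactly the part your proposal leaves open.
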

\begin{proof}
Let $H = \mathbb{F}_2(x,y)$ be the function field defined by $y^2 + (x^3 + x + 1)y + x^2 + x$ over $\F_2$. Let $a_d$ be the number of places of degree $d$.
By \cite[Proposition 5.5]{Iharacompositio}, the genus of $H$ is $g(H)=2$ and the sequence of number of places of $H$ with degree $d$ can be given by
\[
(a_d(H):d\geq1)=(6,0,1,1,6,13,\ldots).
\]
Set $P_3=(x^3+x+1,y+x^2)$ and $P_7=(x^7+x^3+1,y+x^5+x^2)$ be places of $H$ with degree $3$ and $7$ respectively. Let $k_h=H(z_h)$ defined by
$$z_h^2 + [(x^4+x^3+x^2+x)y +(x^2+x+1)]z_h = (x^2 + x)h,$$where $h\neq 0$ is in the span of the functions  $\{1,x,x^2,x^3,y\}$. We can check that every rational places of $H$ splits completely in $k_h$ for $h$ choosen above. Moreover, one can check that the conductor of $k_h/H$ is $2P_3+2P_7$ for $h\neq 1+x+y,1+x+x^3,y+x^3$ and the conductor of $k_h/H$ is $2P_7$ for $h=1+x+y,1+x+x^3,y+x^3.$

Let $k$ be the compositum of all the $k_h$, then $a_1(k)=192$. By \cref{prop:condgenus}, we have $$2g(k)-2=32\times (2g(H)-2)+28\times (2\times 10)+3\times (2\times 7).$$
Thus, the genus of $k$ is $g(k)=334$.

The place $P_3$ with degree $3$ splits completely in $k_{x^3+x+1}$ and is inertia in $ k_{1+x+y}$.  Then $k$ has at least $2$ places with degree $6$. The place $(x^2 + x+ 1)$ is a place with degree $4$ in $H$ which splits completely in the field $k_1/H$ and is inertia in $k_x/H$. Since the residue field is cyclic, we obtain $a_8(k)\geq 16.$ Moreover, one of the place over $(x^5+x^2+1)$ in $H$ splits completely in $k_{x^2}/H$ and is inertia in $k_{x^3}/H$, thus $a_{10}(k)\geq 16$. One of the place over $(x^6+x^3+1)$ in $H$ splits completely in $k_{1}/H$ and is inertia in $k_{x}/H$, thus $a_{12}(k)\geq 16$. Summarize the results as above, the sequence of number of places of $k$ with degree $d$ can be given by
\[
(a_d(k):d\geq1)=(192,0,0,0,0,\ge 2,1,16,0,\geq 16,0,\geq 16,\ldots).
\]

Let the set $S_1$ consist of $1$ place of degree $6$, $16$ places of degree $8$,$12$ places of degree $10$, and $1$ place of degree $12$. Let $\mathfrak p_6$ be the other place of degree $6$, and put \(
\mathfrak{m}
=
\sum_{\mathfrak{p}\in S_1}2\mathfrak{p}
+3\mathfrak p_6.
\) For $|T|=192$, the function field $K=k_{\mathfrak m}^{T}$ has an infinite class field tower by \cref{thm:maincriterion}. Hence, applying \cref{cor:average-conductor}, we obtain
\begin{align*}
    A(2)
    &\geq
\frac{192}{
334-1
+2\cdot 6\left(1-\frac{1}{2^6}\right)
+16\cdot 8\left(1-\frac{1}{2^8}\right)
+12\cdot 10\left(1-\frac{1}{2^{10}}\right)
+1\cdot 12\left(1-\frac{1}{2^{12}}\right)}\\
&\geq 0.317779\cdots.
\end{align*} 
\end{proof}

\begin{remark}
\label{remark:A2}
In this situation, allowing slightly larger ramification depth can still give families of curves with many rational points. A detailed comparison is given in \cref{subsec:6.2}.
\end{remark}

\begin{theorem}\label{thm:6.2}
The Ihara constant $A(3)$ is lower bounded by
      $$ A(3)\geq 0.496074\cdots.$$
    \end{theorem}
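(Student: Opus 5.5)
The plan mirrors the proof of \cref{thm:6.1}: build an explicit base field $k/\F_3$ with many rational places and a controlled supply of places of moderate degree. Then choose a modulus $\mathfrak m$ supported on those places, verify inequality \eqref{eq:main-inequality} of \cref{thm:maincriterion} for some $\tau\in(0,1)$, and evaluate the resulting bound with \cref{cor:average-conductor}.

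\emph{Step 1: the base curve.} Start from a low-genus function field $H/\F_3$ with many rational places, taking the one used in \cite[Section 5]{Iharacompositio} for $A(3)$. Record its degree sequence $(a_d(H))_{d\ge1}$. Take $k$ to be the compositum of suitable Artin--Schreier extensions $z^3-z=h$, with $h$ running over a small $\F_3$-space of functions, chosen so that every rational place of $H$ splits completely. Compute $g(k)$ with the conductor--discriminant formula \cref{prop:condgenus}; this only needs the conductors of the individual $\F_3$-lines. Then $a_1(k)=[k:H]\,a_1(H)$, and $T$ is the set of all these rational places.

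\emph{Step 2: places of moderate degree.} As in the $A(2)$ case, produce lower bounds for $a_d(k)$ for a few small $d$. Use places of $H$ that split completely in some subextensions $k_h$ and are inert in others. Since the decomposition group is cyclic, this forces $3^{m}$ places of degree $3\deg\mathfrak p$ for an explicit $m$.

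\emph{Step 3: choose $S$ and depths.} Take $S$ to be these places, mostly with $\nu_{\mathfrak p}=2$, possibly a few with $\nu_{\mathfrak p}=3$ or $4$. Then compute $n_{\mathfrak p}=f_{\mathfrak p}(\nu_{\mathfrak p}-1-\lfloor(\nu_{\mathfrak p}-1)/3\rfloor)$ and $d$, with $e=1$. Exhibit a $\tau$ for which
\[1-d\tau+\Bigl(d+t-1+\sum_{\mathfrak p}\tbinom{n_{\mathfrak p}}{2}\Bigr)\tau^2+\sum_{\mathfrak p}n_{\mathfrak p}\tau^3<0 .\]
Here the gain over Duursma--Mak is that the $n_{\mathfrak p}$ power relations enter as $\tau^3$ rather than $\tau^2$. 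This makes the condition achievable with a smaller total $\sum f_{\mathfrak p}\nu_{\mathfrak p}$, and hence a smaller denominator.

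\emph{Step 4: the bound.} Apply \cref{thm:maincriterion} and then \cref{cor:average-conductor} to replace $\frac{1}{2[K:k]}\sum_\chi\deg\mathfrak f_\chi$ by $\tfrac12\sum_{\mathfrak p}f_{\mathfrak p}\bigl(\nu_{\mathfrak p}-(\cdots)\bigr)$. For $\nu_{\mathfrak p}=2$ each place contributes $f_{\mathfrak p}(1-3^{-f_{\mathfrak p}})$. This gives $A(3)\ge t/(g(k)-1+\cdots)$, which is then evaluated numerically to $0.496074\cdots$.

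\emph{Main obstacle.} I expect Step 3 to be the hardest: the optimization of $S$ and $(\nu_{\mathfrak p})$. This is a discrete search trading the growth of $d$ against the quadratic $\binom{n_{\mathfrak p}}2\tau^2$ terms and the denominator. Its feasibility also depends on the place counts from Step 2 actually existing, including independence of the splitting and inertness conditions in the compositum. I would first try all-depth-$2$ moduli on the highest-count degrees and then adjust a single place to depth $3$ to fine-tune.
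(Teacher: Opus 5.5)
Your outline follows the paper's route (base field with many rational places, a modulus, \cref{thm:maincriterion}, then \cref{cor:average-conductor}), but it is a template rather than a proof. The one concrete parameter choice it makes points the wrong way.

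\textbf{What the paper actually does.} It does not build a new curve. It imports from the proof of \cite[Proposition 5.6]{Iharacompositio} a field $k/\F_3$ with $g(k)=601$ and $(a_d(k))_{d\ge1}=(567,0,0,0,1,0,0,162,1809)$, and takes $t=567$. It lets $S$ consist of $43$ places of degree $8$ and $2$ places of degree $9$, all with $\nu_{\mathfrak p}=3$. Then $n_{\mathfrak p}=2f_{\mathfrak p}\in\{16,18\}$, $\sum_{\mathfrak p}n_{\mathfrak p}=724$ and $d=158$. Since $d+t-1=\sum_{\mathfrak p}n_{\mathfrak p}$, the quadratic coefficient is $\sum_{\mathfrak p}\binom{n_{\mathfrak p}+1}{2}=6190$. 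The inequality $1-158\tau+6190\tau^2+724\tau^3<0$ holds at $\tau\approx 0.0127$ (value about $-0.007$). By contrast, the Duursma--Mak polynomial $1-158\tau+6914\tau^2$ has no real root. At depth $3$, \cref{cor:average-conductor} gives each place the contribution $\tfrac{f_{\mathfrak p}}{2}\bigl(3-3^{-f_{\mathfrak p}}-2\cdot 3^{-2f_{\mathfrak p}}\bigr)$, not the $3(1-3^{-f_{\mathfrak p}})$ of \cite{Iharacompositio} (cf.\ \cref{remark:A3}). The denominator becomes $600+542.97\ldots$, which yields $0.496074\cdots$. The useful places have degree $8$, so they do not arise from your degree-tripling mechanism in Step 2.

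\textbf{The gap: Step 3 chooses the wrong depth.} For $p=3$ one has $\lfloor(\nu-1)/3\rfloor=0$ for both $\nu=2$ and $\nu=3$. So depth $3$ doubles $n_{\mathfrak p}$ to $2f_{\mathfrak p}$, while the conductor cost rises only from about $f_{\mathfrak p}$ to about $\tfrac32 f_{\mathfrak p}$. Depth $4$ is pointless: it has the same $n_{\mathfrak p}$ as depth $3$ and a larger conductor. The price of depth $3$ is the larger $\binom{2f_{\mathfrak p}}{2}\tau^2$ term. Moving the $n_{\mathfrak p}$ power relations from $\tau^2$ to $\tau^3$ is exactly what makes that price affordable.

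If you follow your default of depth $2$ on the same field, the criterion first holds with about $85$ places of degree $8$ ($d=114$, quadratic coefficient $3060$). The denominator is then about $1280$ and the bound only about $0.44$. Switching a single place to depth $3$ cannot bring the denominator down to the required value of about $1143$. You need depth $3$ throughout, together with the explicit data above. Without them, the final claim that the bound ``evaluates to $0.496074\cdots$'' is unsupported.
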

\begin{proof}
From the proof of \cite[Proposition 5.6]{Iharacompositio}, there exists a function field $k/\F_3$ with genus $g(k)= 601$ and the sequence of number of places of $k$ with degree $d$ can be given by
\[(a_d(k):d\geq1)=(567,0,0,0, 1,0,0,162,1809).
\]
Let the set $S$ consist of $43$ places of degree $8$ and $2$ places of degree $9$.  For $\mathfrak{m}=\sum_{\mathfrak{p}\in S}3\mathfrak{p}$ and $|T| = 567$, the function field $K = k_{\mathfrak{m}}^{T}$ has an infinite class field tower by \cref{thm:maincriterion}. Applying \cref{cor:average-conductor}, we obtain
\[
\begin{aligned}
A(3)
\geq
\frac{567}{
601-1+\frac12\left(
43\cdot8\left(3-\frac1{3^8}-\frac2{3^{16}}\right)
+2\cdot9\left(3-\frac1{3^9}-\frac2{3^{18}}\right)
\right)}
\geq 0.496074\cdots.
\end{aligned}
\]
This completes the proof of \cref{thm:A2}.

\end{proof}

\begin{remark}
\label{remark:A3}
We consider the same function field as that used in \cite{Iharacompositio}. However,   \cref{thm:maincriterion} yields an improved criterion for proving the infinitude of the corresponding class field tower compared to \cite[Theroem 1.1]{Iharacompositio}. Moreover, in this example, the criterion obtained by applying the main theorem of \cite[Theorem 1.1]{Iharacompositio}  does not prove that the associated class field tower is infinite, whereas our criterion does. Moreover, we note a gap in the average conductor estimate of \cite[Proposition 5.6]{Iharacompositio}: the bound $3(1-3^{-f_\gp})$ is not valid in general. Instead, we use the bound \(
3-3^{-f_\gp}-2\cdot3^{-2f_\gp}
\) established in \cref{cor:average-conductor}.
\end{remark}

\subsection{Comparison with the previous methods}\label{subsec:6.2}
Now we recall \cite[Theorem 1.1]{Iharacompositio}. In the above notations, and let $w_\mathfrak{p}=ef_{\mathfrak p}
\left\lfloor
\frac{v_{\mathfrak p}-1}{p}
\right\rfloor$ , the criterion given in \cite{Iharacompositio} for the infinitude of the corresponding class field tower can be stated as follows: there exists a real number $\tau\in(0,1)$ such that   $$P_{G,\tau}^{DM}=1-d\tau+
\left(d+t-1+
\sum_{\mathfrak{p}\in S}\binom{n_\mathfrak{p}+1+w_\mathfrak{p}}{2}\right)\tau^2<0.$$
However, our main tower criterion are formulated by: there exists a real number $\tau\in(0,1)$ such that
$$P_{G,\tau}^{Zass}=1-d\tau+
\left(d+t-1+
\sum_{\mathfrak{p}\in S}\binom{n_\mathfrak{p}}{2}\right)\tau^2+\sum_{\mathfrak{p}\in S}n_\mathfrak{p}\tau^p<0.$$
Thus, \(P_{G,\tau}^{\mathrm{Zass}}<P_{G,\tau}^{\mathrm{DM}}\) for any prime \(p>2\), and also for \(p=2\) whenever there exists \(\mathfrak p\in S\) such that \(w_{\mathfrak p}>0\). Moreover, the gap between these two polynomials becomes more pronounced as \(p\) increases. This comparison explains the strict improvement obtained above : when applied to the same function field, the Zassenhaus-degree criterion establishes the infinitude of the corresponding class field tower, which cannot be established by the criterion of Duursma and Mak. Thus it gives a better lower bound of Ihara constants of small characteristics.

\section{Acknowledgement}

This research was initiated by the third author in October 2024, during his master's studies at Capital Normal University, inspired by a seminar talk he gave on the work of Hajir, Maire and Ramakrishna \cite{HMR2020}. The project was subsequently developed and further refined over the past two years, eventually leading to the present paper. The third author would like to thank Qi Liu for helpful discussions on class field tower problems. The work was supported in part by the National Key Research and Development Program of China under Grants 2022YFA1004900, 2023YFE0123900, and the National Natural Science Foundation of China under Grants 12361141818, 12426302, 12271084 and 12571577.

\section{AI Declaration}
The authors used OpenAI's ChatGPT 6 to improve the language and clarity of expressions in this paper.  ChatGPT 6 also identified a gap in the average conductor estimate in \cite{Iharacompositio}, as discussed in \cref{remark:A3}. Apart from this observation, AI made no substantive mathematical contributions to this work. The authors take full responsibility for the content of the paper.

\end{document}